\documentclass[11pt]{article}
\usepackage[utf8]{inputenc}

\usepackage[width=16cm,height=22cm]{geometry}

\usepackage{titling}
\usepackage{amsmath,amsfonts,amssymb,amsthm,mathtools}
\usepackage{graphicx}
\usepackage{color}
\usepackage{comment}
\usepackage[colorlinks=true,allcolors=blue]{hyperref}
\usepackage{url}
\newcommand{\doi}[1]{\href{https://doi.org/#1}{doi:#1}}
\newcommand{\myurl}[1]{\href{#1}{#1}}
\usepackage{cite}
\usepackage{enumitem}

\newcounter{mycounter}
\newcommand{\mylabel}[1]{\refstepcounter{mycounter}\label{#1}}

\DeclareMathOperator{\arcosh}{arcosh}

\newcommand{\remainder}{\operatorname{rem}}
\newcommand{\al}{\alpha}
\newcommand{\be}{\beta}
\newcommand{\ga}{\gamma}

\newcommand{\eps}{\varepsilon}
\newcommand{\La}{\Lambda}
\newcommand{\ka}{\varkappa}
\newcommand{\la}{\lambda}
\newcommand{\ph}{\varphi}
\newcommand{\si}{\sigma}
\newcommand{\tht}{\vartheta}

\newcommand{\om}{\omega}

\newcommand{\expterm}{g}

\newcommand{\oldp}{\widetilde{p}}
\newcommand{\oldq}{\widetilde{q}}
\newcommand{\oldtht}{\widetilde{\tht}}
\newcommand{\oldE}{\widetilde{E}}
\newcommand{\oldM}{\widetilde{M}}

\newcommand{\bZ}{\mathbb{Z}}
\newcommand{\bC}{\mathbb{C}}
\newcommand{\bN}{\mathbb{N}}

\newcommand{\bR}{\mathbb{R}}

\newcommand{\enumber}{\operatorname{e}}
\newcommand{\imagunit}{\operatorname{i}}
\newcommand{\imunit}{\operatorname{i}}
\newcommand{\eqdef}{\coloneqq}

\newcommand{\SSS}{\Lambda}

\newcommand{\spectrum}{\operatorname{sp}}
\newcommand{\diag}{\operatorname{diag}}

\renewcommand{\phi}{\varphi}
\newcommand{\sign}{\operatorname{sign}}
\newcommand{\matr}[2]{\left[\begin{array}{#1}#2\end{array}\right]}

\newcommand{\closure}{\operatorname{clos}}
\newcommand{\interior}{\operatorname{int}}

\renewcommand{\Re}{\operatorname{Re}}
\renewcommand{\Im}{\operatorname{Im}}

\newtheorem{thm}{Theorem}[section]
\newtheorem{prop}[thm]{Proposition}
\newtheorem{lem}[thm]{Lemma}
\newtheorem{cor}[thm]{Corollary}
\theoremstyle{definition}

\newtheorem{rem}[thm]{Remark}
\numberwithin{equation}{section}

\title{\texorpdfstring{Eigenvalues of the tetradiagonal Toeplitz matrices\\with diagonals 1, 0, 0, 1}{Eigenvalues of the tetradiagonal Toeplitz matrices with diagonals 1, 0, 0, 1}}
\author{Sergei M. Grudsky,
Rom\'{a}n Higuera-Garc\'{i}a,\\[1ex]
Egor A. Maximenko,
Fidel V\'{a}squez-Rojas}
\date{\today}

\begin{document}
\maketitle

\begin{abstract}
We perform a thorough analysis
of the eigenvalues
of tetradiagonal Toeplitz matrices of large order $n$
generated by the Laurent polynomial
$a(t)=t^2+t^{-1}$.
The spectra of these matrices
are invariant under $2\pi/3$-rotation.
They are contained in three segments of the complex plane
and asymptotically fill these segments
as $n$ tends to infinity.
We apply Widom's formula for the determinants and transform the characteristic equation into a convenient form that can be solved by the fixed point iteration method.
After that, we compute the asymptotic distribution of the eigenvalues.
The main results are asymptotic formulas for the eigenvalues,
both close to the origin and far from the origin.
The main results are verified by numerical tests
for moderate values of $n$.

\medskip\noindent
\textbf{Keywords:}
banded Toeplitz matrices,
non-Hermitian matrices,
eigenvalues,
eigenvectors,
asymptotic distribution,
asymptotic expansion,
fixed point.

\medskip\noindent
\textbf{MSC (2020):}
15B05,
15A18,
41A60.

\end{abstract}

\medskip
\tableofcontents

\bigskip
\section{Introduction and main results}
\label{sec:intro}

\noindent
Toeplitz matrices have been an object of extensive studies
for more than one century
(see some references in \cite{BoettcherSilbermann1999,BoettcherGrudsky2005,DeiftItsKrasovsky2011,GaroniSerra2017}).
There are many results about the collective asymptotic behavior
of their eigenvalues, i.e., about the determinants
and the asymptotic distribution.
In the last years, the individual eigenvalues were studied for Hermitian Toeplitz matrices with smooth generating symbols having only two intervals of increasing and decreasing, and satisfying some additional conditions~\cite{BoettcherGrudskyMaksimenko2010,BogoyaBottcherGrudskyMaximenko2015,BogoyaGrudskyMaximenko2017,Rambour2023}.
The asymptotic behavior of the eigenvalues is also investigated for some families of non-Hermitian Toeplitz matrices,
when the eigenvalues are close to the values of the generating symbol
\cite{Widom1994,BatalshchikovGrudskyMalishevaMihalkovichRamirezStukopin2019,BogoyaBoettcherGrudsky2012}.

Much less is known about the
individual eigenvalues of Toeplitz matrices in the non-Hermitian case,
especially when the eigenvalues are far from the image of the generating symbol.
Given a Laurent polynomial
\begin{equation}\label{eq:Laurent}
b(t)=\sum_{j=-r}^s b_j t^j
\end{equation}
and number $n$ in
$\bN\eqdef\{1,2,\ldots\}$,
the \emph{Toeplitz matrix}
$T_n(b)$ is defined by
$T_n(b)=\bigl[b_{j-k}\bigr]_{j,k=1}^n$,
where $b_q=0$ for
$q$ in $\bZ\setminus\{-r,\ldots,s\}$.
The function $b$ is called the \emph{generating symbol}
(or simply the \emph{symbol}) of the sequence of matrices $T_n(b)$.

The first steps in the study
of banded Toeplitz determinants
were made by
Widom~\cite{Widom1958},
Baxter and Schmidt~\cite{BaxterSchmidt1961}.
Schmidt and Spitzer~\cite{SchmidtSpitzer1960}
proved that the spectra $\spectrum T_n(b)$
converge (in the sense of the Hausdorff metric)
to a certain compact subset $\La(b)$ of the complex plane.
Hirschman~\cite{Hirschman1967} and Widom~\cite{Widom1990,Widom1994}
made some improvements of the result by Schmidt and Spitzer.
In particular, Hirschman computed the limiting distribution of the eigenvalues.
Ullman~\cite{Ullman1967} proved that $\La(b)$ is connected.
These results are also explained in~\cite[Chapter~11]{BoettcherGrudsky2005}.
Duits and Kuijlaars~\cite{DuitsKuijlaars2008} showed that the limiting measure of the eigenvalues can be described as the solution of an optimization problem: figuratively speaking, the eigenvalues asymptotically behave like repulsing particles on the set $\La(b)$.
B\"{o}ttcher, Gasca, Grudsky, and Kozak~\cite{BGGK2021}
explained a convenient method to solve the Schmidt--Spitzer equation which defines $\La(b)$.
They also analized some properties of $\La(b)$, especially in the tetradiagonal case ($r=1$, $s=2$).
In that case,
Widom's formula~\cite{Widom1958}
for the characteristic polynomials of $T_n(b)$ contains only three terms~\eqref{eq:Widom_det},
written in terms of the roots $z_1,z_2,z_3$ of $\la-b$.

Recently, Bogoya, Gasca, and Grudsky~\cite{BogoyaGascaGrudsky2024,BogoyaGascaGrudsky2025} considered tetradiagonal Toeplitz matrices $T_n(b)$
under some additional assumptions that guarantee
that the limiting set
$\La(b)$
is an analytic arc in the complex plane.
The thorough analysis in~\cite{BogoyaGascaGrudsky2024,BogoyaGascaGrudsky2025} showed that the third term in Widom's formula~\eqref{eq:Widom_det}
is exponentially small comparing to the first two terms.
After omitting that term and estimating the corresponding error, the authors of~\cite{BogoyaGascaGrudsky2024,BogoyaGascaGrudsky2025}
obtained an approximate characteristic equation,
somewhat similar to the equation in~\cite{BoettcherGrudskyMaksimenko2010}, and found an asymptotic expansion for the eigenvalues, as $n\to\infty$.

We also mention an investigation on another matrix family where the results were similar to some results of the present paper.
Da Fonseca and Veerman~\cite{FonsecaVeerman2009} studied the localization of the eigenvalues of a family of non-Hermitian tridiagonal almost-Toeplitz matrices.
Using an appropriate change of variables,
they transformed the characteristic equation to a trigonometric equation
(that can be solved numerically),
found the localization of the eigenvalues,
and computed the corresponding eigenvectors.

In this paper, we consider another family of tetradiagonal Toeplitz matrices.
Namely, we study Toeplitz matrices $T_n(a)$ generated by the Laurent polynomial
\begin{equation}
\label{eq:a}
a(t)\eqdef t^{-1}+t^2.
\end{equation}
The matrices $T_n(a)$ are tetradiagonal;
that is,
the width of the band is $s+r+1=4$. For instance,
\[
T_6(a)=
\matr{cccccc}{
0 & 1 & 0 & 0 & 0 & 0 \\
0 & 0 & 1 & 0 & 0 & 0 \\
1 & 0 & 0 & 1 & 0 & 0 \\
0 & 1 & 0 & 0 & 1 & 0 \\
0 & 0 & 1 & 0 & 0 & 1 \\
0 & 0 & 0 & 1 & 0 & 0
}.
\]
This family of Toeplitz matrices is very particular, but our results for this family (listed below) are quite complete and can serve as a model case for more general investigations.
In Remark~\ref{rem:comparison}, we emphasize the differences between this paper and~\cite{BoettcherGrudskyMaksimenko2010,BogoyaGascaGrudsky2024,BogoyaGascaGrudsky2025}.

Our goal is to study the behavior
of the eigenvalues of $T_n(a)$,
especially as $n$ tends to $\infty$.
This matrix family is a particular case of an example mentioned by Schmidt and Spitzer in~\cite[Section~7]{SchmidtSpitzer1960},
and the limiting set $\La(a)$
was already found there.
It is the following union of three segments centered and joined at the origin:
\begin{equation}
\label{eq:a_LimitingSet}
\La(a)
= [0,L]
\cup \left(\eps_3 [0,L]\right)
\cup \left(\eps_3^2 [0,L]\right).
\end{equation}
Here $\eps_3$ is the root of unity $\enumber^{2\pi\imunit/3}$ and
\begin{equation}
\label{eq:L_def}
L \eqdef \frac{3}{\sqrt[3]{4}},
\qquad
L \approx 1.88988.
\end{equation}
Using Widom's formula for $\det(\la I_n-T_n(a))$
and analyzing the characteristic equation,
we prove that the spectra of $T_n(a)$ are \emph{contained} in $\La(a)$ for every $n$,
see Figure~\ref{fig:symbol_and_eigenvalues_128}.

\begin{figure}[hbt]
\centering
\includegraphics{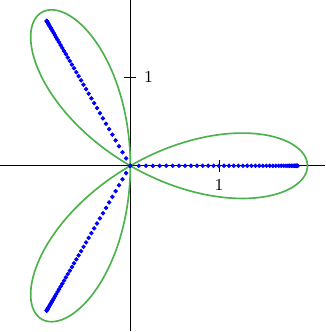}
\caption{The curve $\{a(t)\colon\ |t|=1\}$
and the eigenvalues of $T_{128}(a)$.
\label{fig:symbol_and_eigenvalues_128}}
\end{figure}

In this paper,
we always assume that $n\in\bN$ and $n\ge 3$.
In the division of $n$ by $3$,
we denote by $m$ and $r$ the quotient and remainder, respectively:
$m\eqdef\lfloor n/3\rfloor$,
$r\eqdef\remainder(n,3)$.
Equivalently,
\[
n = 3m+r,\qquad
m\in\bN,\qquad r\in\{0,1,2\}.
\]
For the sake of brevity, we suppress the dependence of $m$ and $r$ on $n$.

\begin{thm}[localization and symmetry of the spectrum]
\label{thm:localization_of_spectrum}
Let $n\in\bN$, $n\ge3$.
Then, the spectrum of $T_n(a)$ is contained in $\La(a)$
and is invariant under $2\pi/3$-rotations around the origin.
More precisely, each one of the three open intervals
$\eps_3^p (0,L)$, where $p\in\{0,1,2\}$,
contains exactly $m$ simple eigenvalues.
If $n$ is not a multiple of $3$,
then $0$ is an eigenvalue
of algebraic multiplicity $r$
and geometric multiplicity $1$.
\end{thm}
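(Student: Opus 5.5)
The plan is to first reduce the characteristic polynomial to a polynomial in $\la^3$, and then to count its real positive roots through Widom's formula.

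\emph{Symmetry.} Put $D_n\eqdef\diag(\eps_3^j)_{j=1}^n$. Conjugation multiplies the entry in position $(j,k)$ by $\eps_3^{j-k}$. The only nonzero diagonals of $T_n(a)$ are $j-k=-1$ and $j-k=2$, and $\eps_3^{-1}=\eps_3^{2}$, so
\[
D_nT_n(a)D_n^{-1}=\eps_3^{2}\,T_n(a).
\]
Hence the spectrum is invariant under multiplication by $\eps_3$. Consequently $\det(\la I_n-T_n(a))=\la^{r}Q_n(\la^3)$ for some polynomial $Q_n$ of degree exactly $m$. For the zero eigenvalue, note that the submatrix obtained by deleting the first column and the last row of $T_n(a)$ is triangular with ones on the diagonal. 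So $\operatorname{rank}T_n(a)\ge n-1$, and the geometric multiplicity of $0$ is at most $1$. It therefore remains to show two things: (i) $Q_n$ has $m$ simple roots in $(0,L^3)$, and (ii) $Q_n(0)\ne0$. Together with the rotation invariance, these give everything in Theorem~\ref{thm:localization_of_spectrum}, since then the algebraic multiplicity of $0$ is exactly $r$. Claim (ii) I would get either from the determinant directly via the three-term recurrence for $\det(\la I_n-T_n(a))$ at $\la=0$ (the determinant of $T_n(a)$ cycles with period $3$ in $n$), or from the limit $\la\to0$ in Widom's formula.

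\emph{Characteristic equation on $(0,L)$.} The roots $z_1,z_2,z_3$ of $\la-a(t)$ are the roots of $t^3-\la t+1=0$. Its discriminant is $4\la^3-27$, which is negative exactly for $0<\la<L$. In that range there is one negative root $z_3$ and a conjugate pair $z_1=\rho\enumber^{\imunit\tht}$, $z_2=\overline{z_1}$. Moreover $z_1z_2z_3=-1$ and $z_1+z_2+z_3=0$, so $\rho$ and $z_3=-2\rho\cos\tht$ are explicit functions of $\tht$. I would parametrize $\la\in(0,L)$ by $\tht$, which runs monotonically over an interval $(\pi/3,\pi/2)$ or similar; this should be checked from $\la=z_1z_2+z_1z_3+z_2z_3$. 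Widom's formula~\eqref{eq:Widom_det} gives $\det(\la I_n-T_n(a))$ as a sum of three terms $z_j^{n+c}/\prod_{k\ne j}(z_j-z_k)$. The $z_1$ and $z_2$ terms are complex conjugate, so their sum is $2\Re$. Dividing by a positive quantity, the equation becomes a real trigonometric equation of the form
\[
\sin\bigl((n+c)\tht+\psi(\tht)\bigr)=h_n(\tht),
\]
where $\psi$ is the argument of the denominator $(z_1-z_2)(z_1-z_3)$, and $h_n$ collects the real-root term divided by $\rho^{n+c}|z_1-z_2||z_1-z_3|$.

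\emph{Counting (main obstacle).} The hard step is to show that this equation has at least $m$ distinct solutions in the $\tht$-interval. Since $\deg Q_n=m$, at least $m$ distinct positive roots of $Q_n$ in $(0,L^3)$ means all roots are there and all are simple. I would first try to prove $|h_n(\tht)|<1$ on the interval, using $|z_3|$ versus $\rho$ and the endpoint behaviour. Then I would show that the phase $(n+c)\tht+\psi(\tht)$ increases by at least $m\pi$ (plus a margin) across the interval. The intermediate value theorem then yields $m$ sign changes of $\det(\la I_n-T_n(a))$, which gives $m$ distinct roots. The delicate points are near $\la\to L$, where $z_1-z_2\to0$ and $z_1,z_2,z_3$ tend to a double root, and near $\la\to0$. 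There I would evaluate the endpoint values of the phase exactly, separately for each $r\in\{0,1,2\}$, so that the count comes out to exactly $m$ rather than $m\pm1$. If $|h_n|<1$ fails near an endpoint, the fallback is to check the sign of the determinant at the endpoint directly: at $\la=L$ via the double root, and at $\la=0$ via (ii).
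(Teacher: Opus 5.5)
Your plan works, and its core is the paper's own argument. The paper also rewrites \eqref{eq:Widom_det} in terms of $\be=\arg z_1$ and obtains $D_n(\la)=\oldM(\be)(2\cos\be)^{n/3}\bigl(\sin((n+1)\be+\oldtht(\be))+(-1)^n\oldp(\be)\oldq(\be)^{n+1}\bigr)$ with a positive prefactor. It then bounds the perturbation by $\frac12$, applies the intermediate value theorem at $m+1$ points, and finishes by counting degrees. Where you deviate, your route is somewhat cleaner. The minor obtained by deleting the first column and the last row is unit lower triangular; applied to $\la I_n-T_n(a)$ it is lower triangular with diagonal $-1$ for every $\la$. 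Hence every eigenvalue has geometric multiplicity $1$, which replaces the paper's explicit eigenvector $u$ and, for $r=2$, the generalized eigenvector $w$ with $T_n(a)w=u$. You also get $D_n(\la)=\la^rQ_n(\la^3)$ with $\deg Q_n=m$ directly from the diagonal similarity, rather than from the recurrence and the binomial coefficients, so the exact multiplicity $r$ of $0$ falls out of the root count.

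When you carry out the counting, correct the following points.
\begin{itemize}
\item The terms of \eqref{eq:Widom_det} are $(z_iz_j)^{n+1}=(-1)^{n+1}z_k^{-(n+1)}$. These are \emph{negative} powers of the roots of $t^3-\la t+1$, not $z_k^{n+c}$. With positive powers the real-root term would dominate, and $|h_n|<1$ would fail.
\item With the correct powers, $\rho/|z_3|=\rho^3=1/(2\cos\be)<1$ and $h_n=(-1)^{n+1}\oldp(\be)\oldq(\be)^{n+1}$. So $|h_n|<\frac12$ on the whole interval, and no endpoint fallback is needed for this bound.
\item The parameter $\be$ runs over $(0,\pi/3)$, not $(\pi/3,\pi/2)$. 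As $\la\to L$ the double root is $2^{-1/3}>0$, and as $\la\to0$ the roots tend to $-1,\enumber^{\pm\imunit\pi/3}$.
\item The phase $(n+1)\be+\oldtht(\be)$ increases from $0$ to $m\pi+\frac{(2r+3)\pi}{6}$. For $r\in\{1,2\}$ this gives $m+1$ interior points where the sine equals $\pm1$. For $r=0$, however, the last such point is the endpoint $\la=0$ itself. There you must use $D_{3m}(0)=(-1)^m$ (from the recurrence), or the continuity of the trigonometric factor at $\be=\pi/3$.
\item Claim (ii) is redundant, since it follows from (i) and $\deg Q_n=m$. That is just as well, because your justification via the $3$-periodicity of $\det T_n(a)$ only covers $r=0$. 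For $r\neq0$ one has $\det T_n(a)=0$, and $Q_n(0)$ is the coefficient of $\la^r$, not the determinant.
\end{itemize}
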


Most part of Theorem~\ref{thm:localization_of_spectrum}
is a particular case of a result by~McMillen~\cite{McMillen2009};
he considered ``double band matrices'',
not necessarily Toeplitz.
We give another proof of Theorem~\ref{thm:localization_of_spectrum}.
The information about the geometric multiplicity of the eigenvalue $0$
is possibly new.

We denote by $\la_{n,1},\ldots,\la_{n,m}$ the eigenvalues of $T_n(a)$ belonging to $(0,L)$ and written in ascending order:
\[
0<\la_{n,1}<\ldots<\la_{n,m}<L.
\]
Define
$F\colon[0,L]\to\left[0,\frac{1}{3}\right]$,
\begin{equation}
\label{eq:F_def}
F(\la)\eqdef
\frac{1}{3}-\frac{1}{\pi}
\arctan\left(\sqrt{3}\tanh\left(
\frac{1}{3}\arcosh\left(\frac{L}{\la}\right)^{3/2}
\right)\right).
\end{equation}
For $\la=0$, we put $F(0)\eqdef 0$.
Then, $F$ is continuous and strictly increasing on $[0,L]$.

Let $G\colon\left[0,\frac{1}{3}\right]\to[0,L]$
be the inverse function to $F$.
As we prove in Section~\ref{sec:roots_of_lambda_minus_a},
$G$ has the following explicit form:
\begin{equation}
\label{eq:G_explicit}
G(\al)
=
\frac{L \left(1-\frac{1}{3}\tan^2\left(\frac{\pi}{3}-\pi\al\right)\right)}%
{\left(1+\tan^2\left(\frac{\pi}{3}-\pi\al\right)\right)^{2/3}}.
\end{equation}
Figure~\ref{fig:F_and_G} shows the graphs of $F$ and $G$.
In these plots and in most figures in this paper,
the scales of the axes are not equal.

\begin{figure}[htb]
\centering
\includegraphics{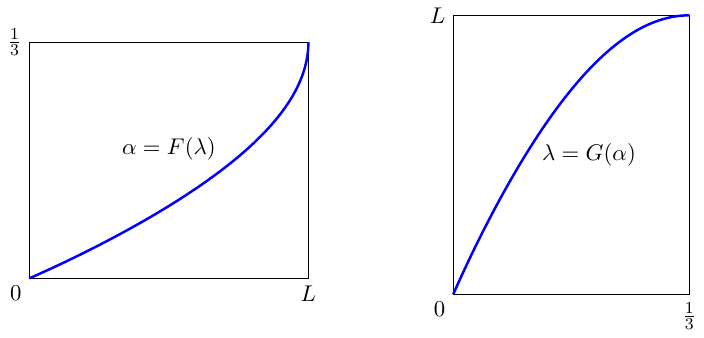}
\caption{Plots of $F$ and $G$.
\label{fig:F_and_G}}
\end{figure}

Given $n$ in $\bN$ with $n\ge 3$,
let
$H_n\colon\bR\to[0,1]$
be the \emph{empirical cumulative distribution function} of the strictly positive eigenvalues of $T_n(a)$:
\begin{equation}
\label{eq:empirical_distribution_function}
H_n(x)
\eqdef
\frac{\#\bigl\{j\in\{1,\ldots,m\}\colon\
\la_{n,j}\le x\bigr\}}{n}.
\end{equation}

\begin{thm}[the asymptotic distribution of the eigenvalues]
\label{thm:asymptotic_distribution}
For every $x$ in $[0,L]$,
\begin{equation}
\label{eq:limiting_distribution}
\lim_{n\to\infty}H_n(x)
=F(x).
\end{equation}
\end{thm}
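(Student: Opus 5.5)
The plan is to locate each eigenvalue $\la_{n,j}$ to within $O(1/n)$ of an explicit point $G(j/n)$. Once that is done, the counting function $nH_n(x)$ differs from $nF(x)$ by a bounded amount, and dividing by $n$ gives \eqref{eq:limiting_distribution}.

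\textbf{Roots of $\la-a$.} For $\la\in(0,L)$ the roots of $\la-a(z)=0$ are the roots of $z^3-\la z+1=0$. I will show that they are one negative real root $z_3=-\rho^{-2}$ and a pair of complex conjugate roots $z_{1,2}=\rho\,\enumber^{\pm\imunit\tht}$ of equal modulus. The product of the roots is $-1$, which forces $z_3=-\rho^{-2}$. Writing $\rho$ and $\tht$ explicitly in terms of $\la$ (the part deferred to Section~\ref{sec:roots_of_lambda_minus_a}), I expect two identities: $\tht(0)=\pi/3$, from the roots of $z^3+1$; and $\tht(L)=0$, since the root $4^{-1/3}$ is double at $\la=L$. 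Between these, $\tht$ should be continuous and strictly decreasing, with
\[
F(\la)=\frac13-\frac{\tht(\la)}{\pi}.
\]
I would verify this by substituting $z=\rho\enumber^{\imunit\tht}$ into the cubic, separating real and imaginary parts, and comparing with \eqref{eq:F_def} and \eqref{eq:G_explicit}.

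\textbf{Characteristic equation.} Next I insert these roots into Widom's formula~\eqref{eq:Widom_det}. The determinant is a sum of three terms, each built from $z_k^{n+3}$ (or a similar power) times rational functions of the roots. The two terms involving the conjugate pair have equal modulus $\rho^{n}$ up to smooth factors. The term involving $z_3$ has a different modulus ratio $\rho^{-3n}$, but this need not make it negligible, so I will keep it and handle it as a bounded perturbation. Dividing by $\rho^n$ times a nonvanishing factor, the equation $\det(\la I_n-T_n(a))=0$ should reduce to
\[
\sin\bigl((n+c)\tht(\la)+\psi(\la)\bigr)=R_n(\la),
\]
where $c$ is a constant, $\psi$ is a bounded continuous phase, and $R_n$ comes from the third term. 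I will aim to show $|R_n|<1$, or at least that the equation takes the form $(n+c)\tht+\psi_n(\la)=\pi k$ with $\psi_n$ uniformly bounded in $n$ and $\la$.

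\textbf{Counting eigenvalues.} By Theorem~\ref{thm:localization_of_spectrum} there are exactly $m$ eigenvalues in $(0,L)$. As $\la$ runs over $(0,L)$, the phase $(n+c)\tht(\la)+\psi_n(\la)$ sweeps an interval of length about $n\pi/3=m\pi+O(1)$. Each crossing of a level $\pi k$ gives a root, and the count of $m$ rules out extra roots. Hence, after indexing in ascending order,
\[
\tht(\la_{n,j})=\frac{\pi}{3}-\frac{\pi j}{n}+O\!\left(\frac1n\right)
\qquad\text{uniformly in } j,
\]
that is, $F(\la_{n,j})=j/n+O(1/n)$. Therefore $\#\{j:\la_{n,j}\le x\}=nF(x)+O(1)$, because $F$ is increasing and the error is uniform, and dividing by $n$ gives the theorem. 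The endpoints are immediate: at $x=0$ we have $H_n(0)=0=F(0)$, and at $x=L$ we have $H_n(L)=m/n\to1/3=F(L)$.

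\textbf{Main obstacle.} The hard step is the uniform control of the third term and of $\psi_n$ near the endpoints $\la\to0$ and $\la\to L$. Near $\la=L$ the two complex roots coalesce, so $\tht\to0$ and the factors $(z_1-z_2)^{-1}$ in Widom's formula blow up. Near $\la=0$ we have $\rho\to1$, so the third term is no longer exponentially small. A uniform $O(1)$ bound on the counting error is still enough, so I would first try a crude argument. If the phase is monotone up to a bounded error, the number of solutions in $(0,x]$ is determined to within an additive constant. Intervals $[0,\delta]$ and $[L-\delta,L]$ can be discarded, since they contain only $O(\delta n)+O(1)$ eigenvalues by the same interior estimate applied at $\delta$ and $L-\delta$. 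Finally, let $\delta\to0$ using the continuity of $F$.
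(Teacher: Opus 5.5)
Your plan has the same architecture as the paper's proof. It uses the explicit roots of $z^3-\la z+1$ and Widom's formula, reduces to a trigonometric equation with a bounded perturbation, finds sign changes at $m+1$ nodes, invokes exactly $m$ roots by Theorem~\ref{thm:localization_of_spectrum}, and then counts; your $\tht$ is the paper's $\be=\arg z_1$. But the estimate everything rests on is only announced (you \emph{aim} to show $|R_n|<1$), and your description of the sizes of the Widom terms is inverted.

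Since $z_1z_2z_3=-1$, the terms of \eqref{eq:Widom_det} are $(z_iz_j)^{n+1}=(-1)^{n+1}z_k^{-(n+1)}$, not powers like $z_k^{n+3}$. So the numerators of the two conjugate terms ($k=1,2$) have modulus $\rho^{-(n+1)}$, the numerator of the remaining term ($k=3$) has modulus $\rho^{2(n+1)}$, and the ratio is $\rho^{3(n+1)}\le 1$, not $\rho^{-3n}$. With your stated ratio the $z_3$-term would dominate exponentially (as $\rho<1$), and no reduction to $\sin(\cdot)=R_n$ with bounded $R_n$ would be possible. So as written this step fails. Your remark that near $\la=0$ the term is no longer exponentially small suggests you had the right picture, but this bound is the crux of the proof.

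With correct bookkeeping the bound is elementary and uniform on all of $(0,L)$. Divide $D_n$ by the nonvanishing factor $\widetilde{M}(\be)(2\cos\be)^{n/3}$ (Propositions~\ref{prop:charpol_via_beta} and~\ref{prop:charpol_via_alpha}). What remains is $E_n(\al)=\cos A_n(\al)+(-1)^r p(\al)q(\al)^{n+1}$, where $q=\rho^3=1/(2\cos\be)\le 1$ and $p=\tan\be/\sqrt{9+\tan^2\be}\le\frac12$; hence $|R_n|\le\frac12$.

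This removes your main obstacle. Near $\la=0$ the perturbation is not small, but $\frac12$ suffices, because at the nodes $\xi_{n,j}$ one has $(-1)^j\cos A_n(\xi_{n,j})>\frac{9}{10}$ (Proposition~\ref{prop:rough_localization_of_solutions}). Near $\la=L$ the blow-up of $(z_1-z_2)^{-1}$ sits entirely in $\widetilde{M}(\be)$, which does not affect the zeros. So no $\delta$-truncation is needed. Your fallback would also work once the correct direction is known: combine the intermediate-value lower bounds on compact subintervals, where $pq^{n+1}\to 0$, with the total count $m$.

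From there, your one-root-per-node-interval localization already gives $F(\la_{n,j})=j/n+O(1/n)$ uniformly, and hence the theorem. The paper sharpens this via the $\arcsin$ reformulation \eqref{eq:transformed_equation} to $|\al_{n,j}-\si_{n,j}|<\frac{1}{5(n+3/2)}$. That yields the explicit counting bound of Proposition~\ref{prop:estimates_for_the_counting_function}, from which the limit follows by dividing by $n$.
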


Figure~\ref{fig:F_and_H_64}
shows the plots of $F$ and $H_{64}$.

\begin{figure}[htb]
\centering
\includegraphics{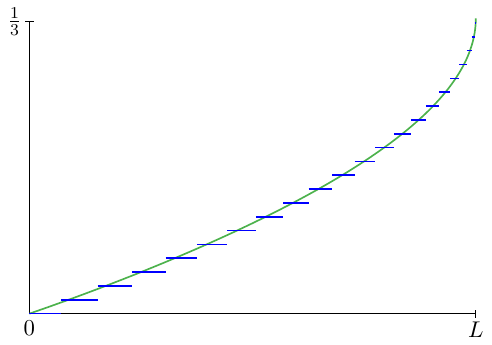}
\caption{Plots of $F$ and $H_{64}$.
\label{fig:F_and_H_64}}
\end{figure}

For every $j$ in $\{1,\ldots,m\}$, we put
$\al_{n,j}\eqdef F(\la_{n,j})$.
Equivalently, $\la_{n,j}=G(\al_{n,j})$.
Theorem~\ref{thm:asymptotic_distribution} implies that
the numbers $\al_{n,j}$ are uniformly distributed on $[0,1/3]$:
\begin{equation}
\label{eq:limiting_distribution_alpha}
\lim_{n\to\infty}
\frac{\#\bigl\{j\in\{1,\ldots,m\}\colon\
\al_{n,j}\le y\bigr\}}{n}
=y
\qquad \left(0\le y\le \frac{1}{3}\right).
\end{equation}
Therefore, $\al=F(\la)$ is the \emph{natural change of variables} in the characteristic equation.

To rewrite the characteristic equation in a convenient form,
we need some auxiliary objects.
We define real-valued functions $\tht$, $p$, $q$ on $[0,1/3]$ by
\begin{align}
\label{eq:def_theta}
\tht(\al)
&\eqdef
\frac{1}{6}
-\frac{\al}{2}
-\frac{1}{\pi}
\arctan\frac{\tan\left(\frac{\pi}{3}-\pi\al\right)}{3},
\\
\label{eq:def_p}
p(\al)
&\eqdef
\frac{\tan\left(\frac{\pi}{3}-\pi\al\right)}%
{\sqrt{9+\tan^2\left(\frac{\pi}{3}-\pi\al\right)}},
\\
\label{eq:def_q}
q(\al)
&\eqdef
\frac{1}{2\cos\left(\frac{\pi}{3}-\pi\al\right)}.
\end{align}
The plots of these functions
are shown on Figure~\ref{fig:theta_p_q}.

\begin{figure}[htb]
\centering
\includegraphics{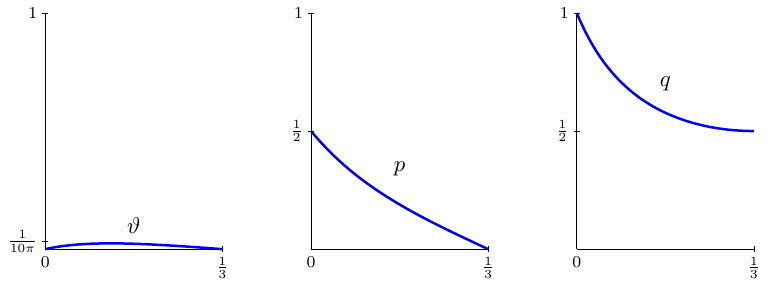}
\caption{Plots of $\tht$, $p$, and $q$.
\label{fig:theta_p_q}}
\end{figure}

Furthermore, for every $j$ in $\{1,\ldots,m\}$,
we put
\begin{equation}
\label{eq:initial_approximation}
\si_{n,j}
\eqdef
\frac{j-\frac{1}{2}+\frac{r}{3}}{n+\frac{3}{2}}
\end{equation}
and define
$\expterm_{n,j}\colon[0,1/3]\to\bR$
and $f_{n,j}\colon[0,1/3]\to[0,1/3]$ by
\begin{align}
\label{eq:expterm_def}
\expterm_{n,j}(\al)
&\eqdef
\frac{(-1)^{r+j+1}}{\bigl(n+\frac{3}{2}\bigr)\pi}
\arcsin\bigl(p(\al)q(\al)^{n+1}\bigr),
\\[1ex]
\label{eq:fnj_def}
f_{n,j}(\al)
&\eqdef
\si_{n,j}
-\frac{\tht(\al)}{n+\frac{3}{2}}
+\expterm_{n,j}(\al).
\end{align}
It is easy to see that the values of $f_{n,j}$ indeed belong to $[0,1/3]$.

\begin{thm}[convenient form of the characteristic equation]
\label{thm:char_equation_as_fixed_point}
For every $n\ge 3$ and $j$ in $\{1,\ldots,m\}$,
the function $f_{n,j}$ is a contraction on
$\left[\frac{1}{3(n+1)},\frac{1}{3}\right]$,
and $\al_{n,j}$ is the unique solution of the equation
\begin{equation}
\label{eq:maineq}
\al=f_{n,j}(\al).
\end{equation}
\end{thm}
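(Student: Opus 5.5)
The plan is to derive \eqref{eq:maineq} directly from Widom's formula, written in the variable $\al=F(\la)$, and then to treat the contraction property as a separate analytic estimate.

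\textbf{Step 1: roots of $\la-a$ in terms of $\al$.} For $\la\in(0,L)$ the equation $\la=a(t)$ is equivalent to $t^3-\la t+1=0$. It has one real root $z_3<0$ and a pair of complex conjugate roots $z_{1,2}=\rho\enumber^{\pm\imunit\phi}$. I will parametrize $\phi$ and $\rho$ by $\al$. The expected choice is $\phi=\frac{\pi}{3}-\pi\al$, up to sign conventions, which is what \eqref{eq:G_explicit} suggests. Vieta's relations $z_1z_2z_3=-1$ and $z_1+z_2+z_3=0$ then give $z_3=-2\rho\cos\phi$ and $\rho^3\cdot 2\cos\phi=1$. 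Hence $|z_3|/|z_{1}|=2\cos\phi=1/q(\al)$, and $\la=G(\al)$ follows. This step also proves \eqref{eq:G_explicit} and that $F$ and $G$ are mutually inverse, as announced for Section~\ref{sec:roots_of_lambda_minus_a}.

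\textbf{Step 2: the characteristic equation.} Widom's formula \eqref{eq:Widom_det} writes $\det(\la I_n-T_n(a))$ as a sum of three terms, each a power $z_k^{n+1}$ (or a similar power) times a Vandermonde-type coefficient in $z_1,z_2,z_3$. I will divide by $\rho^{n+1}$ and use the conjugate symmetry of $z_1,z_2$. The two conjugate terms then combine into $\Im$ of a single complex number, which equals a modulus times $\sin\bigl((n+\tfrac32)\phi+\psi(\phi)\bigr)$, where $\psi$ is the argument of the coefficient. The third term becomes a real quantity proportional to $(z_3/\rho)^{n+1}$, whose size is governed by $q^{n+1}$ with sign $(-1)^{n+1}$. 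After normalizing, the equation should read
\[
\sin\Bigl(\pi\bigl(n+\tfrac32\bigr)\al+\pi\tht(\al)-c_r\Bigr)=\pm\,p(\al)\,q(\al)^{n+1},
\]
where the constant $c_r$ depends only on $r=\remainder(n,3)$. Simplifying the arguments of the Vandermonde factors should produce exactly the arctangent in \eqref{eq:def_theta} and the ratio $p$ in \eqref{eq:def_p}.

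Next, $q\in[\tfrac12,1]$ and $0\le p\le\tfrac12$ on $[0,1/3]$, so the right-hand side lies in $[-\tfrac12,\tfrac12]$ and $\arcsin$ is applicable. Inverting the sine on the $j$-th branch gives $\al=f_{n,j}(\al)$; the sign $(-1)^{r+j+1}$ comes from the parity of the branch. Conversely, every fixed point of $f_{n,j}$ gives a zero of the determinant with $\la=G(\al)\in(0,L)$.

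\textbf{Step 3: contraction on $I_n\eqdef[\frac{1}{3(n+1)},\frac13]$.} First I will check that $f_{n,j}(I_n)\subset I_n$, which follows from the bounds on $\tht$ and on $\expterm_{n,j}$. Then
\[
|f_{n,j}'|\le\frac{|\tht'|}{n+\frac32}+\frac{(n+1)\,p\,q^{n}|q'|+|p'|q^{n+1}}{\pi\bigl(n+\frac32\bigr)\sqrt{1-p^2q^{2n+2}}}.
\]
The first summand is $O(1/n)$, and so is the $p'$ part. The dangerous term is $(n+1)q^n|q'|p/(\pi(n+\frac32))$, which is not small near $\al=0$, where $q\to1$. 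This is why the interval starts at $\frac{1}{3(n+1)}$. There $q(\al)\le 1/(1+c\al)$ roughly, since $q'(0)=-\pi\sqrt3$, so $q^n\le\enumber^{-c'n\al}$. Combined with $p\le\frac12$ and $|q'|\le\pi\sqrt3$, this should push the total below $1$ for $n\ge3$.

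This uniform, explicit-constant estimate valid already for small $n$ is the step I expect to be the main obstacle. I would first try bounding $(n+1)\,q^{n}$ via $\log q(\al)\le-\kappa\al$ with an explicit $\kappa$. If that is too lossy, I would check small $n$ numerically and handle large $n$ asymptotically.

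\textbf{Step 4: identification.} By Banach's theorem each $f_{n,j}$ has a unique fixed point in $I_n$. The fixed points for different $j$ lie in disjoint windows around $\si_{n,j}$, so they are increasing in $j$ and there are $m$ distinct ones. By Theorem~\ref{thm:localization_of_spectrum}, $(0,L)$ contains exactly $m$ eigenvalues. Since $G$ is increasing, the $j$-th fixed point must equal $\al_{n,j}$. It remains to confirm that no eigenvalue has $\al<\frac{1}{3(n+1)}$, which follows from the same counting argument.
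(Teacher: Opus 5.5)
Your proposal is correct and follows essentially the paper's route. Vieta and Widom's formula in the variable $\be=\frac{\pi}{3}-\pi\al$ lead to $E_n(\al)=0$, and the sine is inverted branch by branch. The contraction on $\left[\frac{1}{3(n+1)},\frac{1}{3}\right]$ rests on $q(\al)^{n+1}\le\frac{1}{4}$ there. The paper gets this from concavity of $\log(1/q)$ on $[0,\frac{1}{12}]$, i.e.\ your bound $\log q(\al)\le-\ka\al$ with $\ka=6\log 2$, plus monotonicity of $q$. The identification with $\al_{n,j}$ uses the exact count from Theorem~\ref{thm:localization_of_spectrum}; you arrange it as Banach fixed points plus counting rather than the paper's window argument, but the ingredients are the same.

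One slip to fix in Step~2. Since $z_iz_k=-1/z_l$, the Widom terms are $(-1)^{n+1}z_l^{-(n+1)}$ times coefficients, so you must multiply (not divide) by $\rho^{n+1}$. The small term is then proportional to $(\rho/z_3)^{n+1}$, of modulus $q^{n+1}$. As you wrote it, $(z_3/\rho)^{n+1}$ has modulus $q^{-(n+1)}$ and would dominate instead.
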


Theorem~\ref{thm:char_equation_as_fixed_point}
implies that the solution $\al_{n,j}$ of~\eqref{eq:maineq}
can be computed with any desired precision
by the simple iteration algorithm.
In Proposition~\ref{prop:equation_with_fnj_and_inital_approximation},
we show that $\al_{n,j}$ belongs to a certain small interval around $\si_{n,j}$.

Because of the exponential factor $q^{n+1}$ in the right-hand side of~\eqref{eq:expterm_def},
if $n$ is large enough
and $j$ is not too small,
then $\expterm_{n,j}$ is very small.
Since $\tht$ is a smooth function,
we easily get an asymptotic expansion of the eigenvalues.

We define $A_0, A_1, A_2\colon[0,1/3]\to\bR$ by
\begin{equation}
\label{eq:A0A1A2_def}
A_0\eqdef G,\qquad
A_1\eqdef-G'\tht,\qquad
A_2\eqdef G'\tht\tht'
+\frac{1}{2}G''\tht^2.
\end{equation}
Explicit expressions for $\tht'$,
$G'$, and $G''$
are given in~\eqref{eq:tht_D1}
and~\eqref{eq:G_derivatives}.

\begin{thm}[asymptotic formula for the eigenvalues not too close to the origin]
\label{thm:lambda_asympt_not_too_close_to_the_origin}
There exists $C_1>0$ such that for every $n$ large enough and every $j$ in $\{\lfloor \log n\rfloor,\ldots,m\}$,
\begin{equation}
\label{eq:lambda_asympt_not_too_close_to_the_origin}
\la_{n,j}
=A_0(\si_{n,j})
+\frac{A_1(\si_{n,j})}{n+\frac{3}{2}}
+\frac{A_2(\si_{n,j})}{\left(n+\frac{3}{2}\right)^2}
+R_{1,n,j},
\end{equation}
where
\begin{equation}
|R_{1,n,j}|\le \frac{C_1}{\left(n+\frac{3}{2}\right)^3}.
\end{equation}
\end{thm}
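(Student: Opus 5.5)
We start from Theorem~\ref{thm:char_equation_as_fixed_point}: $\al_{n,j}$ is the fixed point of $f_{n,j}$, so with $N\eqdef n+\frac{3}{2}$,
\[
\al_{n,j}=\si_{n,j}-\frac{\tht(\al_{n,j})}{N}+\expterm_{n,j}(\al_{n,j}).
\]

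\textbf{Step 1: the exponential term is negligible.} For $j\ge\lfloor\log n\rfloor$ we have $\si_{n,j}\ge c\log n/n$. Since $\tht$ is bounded, $\al_{n,j}\ge \si_{n,j}-C/N$. This is still at least $c'\log n/n$ for large $n$, perhaps after enlarging the constant in front of $\log n$; the finitely many boundary indices need a separate check, see Step~3. Near $\al=0$ we have $q(\al)=1/(2\cos(\pi/3-\pi\al))=1-\sqrt{3}\pi\al+O(\al^2)$, and $q$ decreases on $[0,1/3]$. Hence
\[
q(\al_{n,j})^{n+1}\le \exp(-c''\log n)=n^{-c''}.
\]
Using $|p|\le 1$ and $|\arcsin x|\le \frac{\pi}{2}|x|$, this gives $|\expterm_{n,j}(\al_{n,j})|\le C n^{-1-c''}$. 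We need this to be $O(N^{-3})$, that is $c''\ge2$. This is the delicate point. The constant $c''$ comes from $\sqrt{3}\pi\al$ times $n$, with $\al\approx j/n$, so $q^{n+1}\approx \exp(-\sqrt3\pi j)\le n^{-\sqrt3\pi}$. Since $\sqrt{3}\pi\approx5.44>2$, the bound holds. I would make this rigorous via $q(\al)\le \exp(-\ka\al)$ on $[0,1/3]$ for a suitable $\ka>2$, together with $\al_{n,j}\ge (j-C)/N$. The loss from the $-C$ only costs a constant factor.

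\textbf{Step 2: expand $\al_{n,j}$ in powers of $1/N$.} Write $\al_{n,j}=\si_{n,j}+\delta$. Then $\delta=-\tht(\si+\delta)/N+O(N^{-3})$, so $\delta=O(1/N)$. Taylor expanding $\tht$ (smooth on $[0,1/3]$) gives
\[
\delta=-\frac{\tht(\si)}{N}+\frac{\tht(\si)\tht'(\si)}{N^2}+O(N^{-3}),
\]
with a constant uniform in $\si$ because $\tht''$ is bounded.

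\textbf{Step 3: pass to eigenvalues.} We have $\la_{n,j}=G(\al_{n,j})=G(\si+\delta)$. Taylor expansion to second order gives
\[
G(\si)+G'(\si)\delta+\tfrac12G''(\si)\delta^2+O(\|G'''\|\,|\delta|^3).
\]
Substituting $\delta$ yields exactly $A_0$, $A_1=-G'\tht$ and $A_2=G'\tht\tht'+\frac12G''\tht^2$, with remainder $O(N^{-3})$.

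\textbf{Main obstacle.} The hard part is to verify that $G$ is $C^3$ on $[0,1/3]$, at least on the range of $\si$ and $\al_{n,j}$ involved, with bounded derivatives. By the explicit formula~\eqref{eq:G_explicit}, $G$ is a smooth function of $\tan(\pi/3-\pi\al)$. Since $\pi/3-\pi\al\in[0,\pi/3]$, the tangent stays bounded, so $G$ is real-analytic on a neighborhood of $[0,1/3]$ and the derivative bounds follow. The same reasoning applies to $\tht$.

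The remaining point is the threshold. If the constant in $\log n$ is too small for Step~1, we note that for $j$ near $\log n$ the decay is $\exp(-\sqrt3\pi(\log n - C))$, which is still $O(n^{-5})$. So the threshold $\lfloor\log n\rfloor$ suffices for all large $n$.
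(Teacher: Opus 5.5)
Your proposal is correct and follows essentially the paper's route. You show that $\expterm_{n,j}(\al_{n,j})=O\bigl((n+\tfrac32)^{-3}\bigr)$ for $j\gtrsim\log n$; your constant is fine, e.g.\ $\ka=3\log 2>2$ works on all of $[0,1/3]$ by concavity of $\log(1/q)$, which is how the paper argues. You then expand $\al_{n,j}$ to second order in $1/(n+\frac32)$ with constants uniform in $\si_{n,j}$ and Taylor-expand $G$. The only difference is that the paper first passes to the fixed point $\widetilde{\al}_{n,j}$ of $\widetilde f_{n,j}$ (via Proposition~\ref{prop:two_contractive_functions}) and then applies Corollary~\ref{cor:scheme_particular_case3}, whereas you bootstrap the exact equation directly, treating the exponential term as a known $O(N^{-3})$ perturbation.
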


In other words, the residue term in~\eqref{eq:lambda_asympt_not_too_close_to_the_origin}
can be written as $O(n^{-3})$,
and the upper bound of this residue is uniform with respect to $j$, where $\log n\le j\le m$.

For small values of $j$,
the term $\expterm_{n,j}$ cannot be dropped,
and the corresponding asymptotic formula for $\la_{n,j}$ is different.
To present it, we need the following auxiliary functions and numbers.
For every $r$ in $\{0,1,2\}$
and every $j$ in $\bN$,
we define
$\om_{r,j,0}$,
$\om_{r,j,1}$,
$\om_{r,j,2}$
from $[0,+\infty)$ to $\bR$ by
\begin{align}
\label{eq:om0_def}
\om_{r,j,0}(\ga)
&\eqdef 
\ga
-j+\frac{1}{2}-\frac{r}{3}
+\frac{(-1)^{r+j}}{\pi}
\arcsin\left(\frac{1}{2}\enumber^{-\sqrt{3}\pi\,\ga}\right),
\\[1ex]
\label{eq:om1_def}
\om_{r,j,1}(\ga)
&\eqdef
\frac{2(-1)^{r+j}\pi\ga^2}{(4 \enumber^{2\sqrt{3}\pi\ga }-1)^{1/2}},
\\[1ex]
\label{eq:om2_def}
\om_{r,j,2}(\ga)
&\eqdef
-\frac{2\sqrt{3}}{3}\pi\ga^2
+(-1)^{r+j}
\left(
\frac{2(-\pi\ga^2 -\frac{2\sqrt{3}}{3}\pi^2\ga^3 + \pi^3\ga^4)}{(4 \enumber^{2\sqrt{3}\pi\ga }-1)^{1/2}}
+
\frac{2\pi^3\ga^4}{(4\enumber^{2\sqrt{3}\pi\ga} -1)^{3/2}}
\right).
\end{align}
For each $r$ in $\{0,1,2\}$ and each $j$ in $\bN$, we define $\ph_{r,j}$
as the unique number $\ga$
in $[0,+\infty)$
satisfying the equation
$\om_{r,j,0}(\ga)=0$.
The last equation can also
be written in the form
\begin{equation}
\label{eq:ph_equation}
\ga
=
j-\frac{1}{2}+\frac{r}{3}
+\frac{(-1)^{r+j+1}}{\pi}
\arcsin\left(\frac{1}{2}
\enumber^{-\sqrt{3}\pi\,\ga}\right).
\end{equation}
Next, we define real numbers
$\psi_{r,j}$ and $\tau_{r,j}$ by
\begin{align}
\label{eq:psi_def}
\psi_{r,j}
&\eqdef
-\frac{\om_{r,j,1}(\phi_{r,j})}{\om_{r,j,0}'(\phi_{r,j})},
\qquad
\text{i.e.,}
\qquad
\psi_{r,j}
=
\frac{2\pi\ph_{r,j}^2}{\sqrt{3}+(-1)^{r+j+1}
\sqrt{4\enumber^{2\sqrt{3}\,\pi\,\ph_{r,j}}-1}},
\\[1ex]
\label{eq:tau_def}
\tau_{r,j}
&\eqdef
-\frac{1}{\om_{r,j,0}'(\phi_{r,j})}
\left(\frac{\om_{r,j,0}''(\phi_{r,j})}{2}\psi_{r,j}^2
+\om_{r,j,1}'(\phi_{r,j})\psi_{r,j}
+\om_{r,j,2}(\phi_{r,j})
\right),
\end{align}
The first derivatives of $\om_{r,j,0}$ and $\om_{r,j,1}$ can be computed explicitly, see~\eqref{eq:om_0_D1}--\eqref{eq:om_1_D1}.

The following theorem is the hardest result of this paper.

\begin{thm}[asymptotic formula for the eigenvalues close to the origin]
\label{thm:lambda_asympt_close_to_the_origin}
There exists $C_2>0$
such that for every $n$ large enough
and every
$j$ in $\{1,\ldots,\lfloor \log n\rfloor\}$,
\begin{equation}
\label{eq:asympt_near_zero}
\la_{n,j}
=G\left(\frac{\ph_{r,j}}{n+2}
+\frac{\psi_{r,j}}{(n+2)^2}
+\frac{\tau_{r,j}}{(n+2)^3}
\right)
+R_{2,n,j},
\end{equation}
where
$\ph_{r,j}$ is the solution of the transcendental equation~\eqref{eq:ph_equation},
$\psi_{r,j}$ and $\tau_{r,j}$
are given by~\eqref{eq:psi_def} and~\eqref{eq:tau_def},
respectively,
and
$|R_{2,n,j}|\le \frac{C_2 j^3}{(n+2)^4}$.
\end{thm}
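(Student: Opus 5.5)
We start from Theorem~\ref{thm:char_equation_as_fixed_point}: $\al_{n,j}$ is the unique fixed point of $f_{n,j}$. For $j\le\lfloor\log n\rfloor$ the point $\al_{n,j}$ is of order $j/n$, so we rescale. Put $N\eqdef n+2$ and write $\al=\ga/N$ with $\ga$ in a bounded interval (bounded by a constant times $j$). Multiplying the equation $\al=f_{n,j}(\al)$ by $n+\frac{3}{2}=N-\frac12$ gives an equivalent equation $\Omega_{n}(\ga)=0$, where
\[
\Omega_n(\ga)\eqdef \Bigl(N-\tfrac12\Bigr)\frac{\ga}{N}-j+\tfrac12-\tfrac r3+\tht\Bigl(\frac{\ga}{N}\Bigr)+\frac{(-1)^{r+j}}{\pi}\arcsin\Bigl(p\Bigl(\frac{\ga}{N}\Bigr)q\Bigl(\frac{\ga}{N}\Bigr)^{n+1}\Bigr).
\]
The plan is to expand $\Omega_n$ in powers of $1/N$, uniformly for $\ga$ in a compact set of size $O(j)$:
\[
\Omega_n(\ga)=\om_{r,j,0}(\ga)+\frac{\om_{r,j,1}(\ga)}{N}+\frac{\om_{r,j,2}(\ga)}{N^2}+O\Bigl(\frac{j^3}{N^3}\Bigr)
\]
(together with analogous bounds for the first two derivatives in $\ga$). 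The ingredients are the Taylor expansions at $\al=0$. Since $\tht(0)=0$ and $\tht'(0)=-\frac12+\frac{1}{3}\cdot\frac{\pi\sec^2(\pi/3)}{\pi(1+1/3)}\cdot$, a short computation gives $\tht(\al)=\tht'(0)\al+O(\al^2)$; this linear term should combine with $-\ga/(2N)$ to cancel. Also $p(0)=\frac{\sqrt3}{\sqrt{12}}=\frac12$, and $\log q(\al)=-\sqrt3\pi\al+c_2\al^2+c_3\al^3+\dots$, because $q(0)=1$ and $\frac{d}{d\al}\log q=-\pi\tan(\frac\pi3-\pi\al)$. Hence
\[
q(\ga/N)^{n+1}=q(\ga/N)^{N-1}=\enumber^{-\sqrt3\pi\ga}\Bigl(1+\frac{\sqrt3\pi\ga+c_2\ga^2}{N}+\cdots\Bigr).
\]
Feeding these expansions into $\arcsin$ and expanding around $\frac12\enumber^{-\sqrt3\pi\ga}$ produces the factors $(4\enumber^{2\sqrt3\pi\ga}-1)^{-1/2}$ and $(\cdot)^{-3/2}$ seen in~\eqref{eq:om1_def}--\eqref{eq:om2_def}. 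I expect the cancellation of the $\ga$-linear pieces to be exactly why $N=n+2$ was chosen. The bulk of the routine verification is checking that the coefficients match $\om_{r,j,1}$ and $\om_{r,j,2}$. The remainders contain polynomial factors $\ga^k$ with $k\le 6$ multiplied by $\enumber^{-\sqrt3\pi\ga}$ inside the $\arcsin$ terms, while the non-exponential part, coming from $\tht$, is $O(\ga^3/N^3)$. This gives the $j^3$ in the error.

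Next comes the perturbation step. By construction $\om_{r,j,0}(\ph_{r,j})=0$. We need $\om_{r,j,0}'$ bounded away from zero uniformly in $r$ and $j$. This holds because $\om_{r,j,0}'(\ga)=1+(-1)^{r+j+1}\sqrt3\,(4\enumber^{2\sqrt3\pi\ga}-1)^{-1/2}$, and since $\ph_{r,j}\ge\frac12-\frac1\pi\arcsin\frac12>0$, the second term has modulus at most $\sqrt3/\sqrt{4\enumber^{2\sqrt3\pi c}-1}<1$. With this in hand, a standard implicit-function / Newton argument applies. Take the root of $\Omega_n$ (which exists uniquely, since it corresponds to $\al_{n,j}$) and write it as $\ga_n=\ph+\psi/N+\tau/N^2+\rho$. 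Substituting and Taylor-expanding $\om_{r,j,0}$ to second order and $\om_{r,j,1}$ to first order gives the stated $\psi_{r,j}$ and $\tau_{r,j}$ by matching the $N^{-1}$ and $N^{-2}$ coefficients. The remainder satisfies $|\rho|\le C j^3/N^3$, using the derivative lower bound together with the uniform bounds on $\om''$ and the expansion error. One also needs a priori localization of $\ga_n$ near $\ph_{r,j}$. This follows from Proposition~\ref{prop:equation_with_fnj_and_inital_approximation}, or from monotonicity of $\Omega_n$ and a sign change of $\Omega_n$ on $[\ph-\delta,\ph+\delta]$.

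Finally, $\la_{n,j}=G(\al_{n,j})=G(\ga_n/N)$. Since $G'$ is bounded on $[0,1/3]$, we get $|R_{2,n,j}|\le \|G'\|_\infty|\rho|/N\le C_2 j^3/N^4$.

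The main obstacle is the uniform expansion of $\Omega_n$ with an error of order $j^3/N^3$, rather than something worse like $j^6/N^3$. The polynomial growth in $\ga$ coming from the $\arcsin$ terms is damped by $\enumber^{-\sqrt3\pi\ga}$, so those terms are uniformly small. However, the error from $\tht$ and from the difference between $n+\frac32$ and $N$ has to be tracked carefully; this is the source of the $j^3$ factor. A similar bookkeeping is needed for the $\ga$-derivatives that enter the Newton step.
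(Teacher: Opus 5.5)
Your proposal is correct and follows essentially the same route as the paper. The shared steps are: the rescaling $\ga=(n+2)\al$, $h=1/(n+2)$; the expansion $\Omega=\om_{r,j,0}+\om_{r,j,1}h+\om_{r,j,2}h^2+O(j^3h^3)$, where the $\arcsin$ remainders are exponentially damped and the $j^3$ comes from the cubic Taylor remainder of $\tht$; the three-term perturbation scheme of Proposition~\ref{prop:scheme_solve_equations_by_asymptotic_expansions3}, with $|\om_{r,j,0}'|$ bounded below for $\ga$ bounded away from $0$; and the mean value theorem for $G$. One minor correction: the $O(h^3)$ remainder in the perturbation step needs bounds on $\om_{r,j,0}'''$, $\om_{r,j,1}''$ and $\om_{r,j,2}'$, not only second derivatives, whereas no derivative bounds on the remainder of $\Omega$ itself are required.
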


Since $j$ in Theorem~\ref{thm:lambda_asympt_close_to_the_origin} is bounded by $\log n$,
the upper estimate for $|R_{2,n,j}|$
can also be written less precisely as
$O((\log n)^3/n^4)$.

For every $r$ and $j$,
the transcendental
equation~\eqref{eq:ph_equation}
can be solved by the simple iteration method
(see Lemma~\ref{lem:transcendental_equation_contractive}).
The number of such equations
is small in comparison to $n$.
For example, since
$\lfloor\log(16384)\rfloor=9$
and $r$ takes values in $\{0,1,2\}$,
it is sufficient to solve only $27$ equations of the form~\eqref{eq:ph_equation}
to apply the asymptotic formula~\eqref{eq:asympt_near_zero}
for all $n$ with $n\le 16384$.

Using the denominator $n+2$ in Theorem~\ref{thm:lambda_asympt_close_to_the_origin}
(instead of $n$, $n+1$, or $n+\frac{3}{2}$)
simplifies the formula for
$\om_{r,j,1}$
and slightly improves
the approximation of $\la_{n,j}$
by $G(\phi_{r,j}/(n+2))$.
Nevertheless, this choice does not change the asymptotic order of the residue term
in~\eqref{eq:asympt_near_zero}.

Since $G(0)=0$,
Theorem~\ref{thm:lambda_asympt_close_to_the_origin} yields the following rough approximation for the first positive eigenvalue ($j=1$):
\begin{equation}
\label{eq:first_eigenvalue_rough_approximation}
\la_{n,1} = \frac{G'(0)\ph_{r,1}}{n+2}
+ O\left(\frac{1}{(n+2)^2}\right).
\end{equation}
The constant coefficients in~\eqref{eq:first_eigenvalue_rough_approximation}
have the following numerical values:
\[
G'(0)=2\pi\sqrt{3}\approx 10.88,\qquad
\ph_{0,1}\approx 0.5099,\qquad
\ph_{1,1}\approx 0.8316,\qquad
\ph_{2,1}\approx 1.1669.
\]
Therefore, as $n$ changes, $\la_{n,1}$ has a ``ragged'' behavior depending on the remainder
$r=\remainder(n,3)$;
see Figure~\ref{fig:first_eigenvalue}.

\begin{figure}[htb]
\centering
\includegraphics{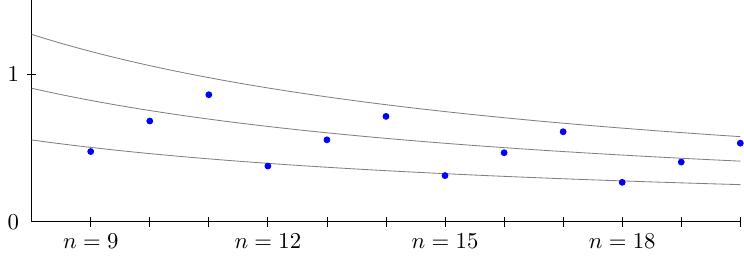}
\caption{First eigenvalues $\la_{n,1}$
for small values of $n$,
shown as blue points
$(n,\la_{n,1})$.
The gray hyperbolas are graphs
of the functions
$x \mapsto G'(0)\phi_{r,1}/x$,
$r\in\{0,1,2\}$.
\label{fig:first_eigenvalue}}
\end{figure}

\begin{rem}[eigenvectors]
\label{rem:eigenvectors}
The eigenvectors of $T_n(a)$ can be computed by the formulas given by Trench~\cite{Trench1985eig}
or Maximenko and Moctezuma-Salazar~\cite{MaximenkoMoctezuma2017}.
Applying these general formulas to our case,
after some simplifications
we have found that if $j\in\{1,\ldots,m\}$ and $v_{n,j}\eqdef[v_{n,j,k}]_{k=1}^n$ is the vector with the components
\begin{equation}
\label{eq:main_eigenvector}
v_{n,j,k}
\eqdef
\frac{(-1)^{\left\lfloor\frac{k-1}{3}\right\rfloor}}%
{q(\al_{n,j})^{\frac{k-1}{3}}}
\left(
\cos\left(\left(k+\frac{1}{2}\right)\pi\al_{n,j}
+ \pi\tht(\al_{n,j})
- \frac{r_k\pi}{3}\right)
+ (-1)^{r_k}
p(\al_{n,j})
q(\al_{n,j})^k
\right),
\end{equation}
where $r_k=\remainder(k-1,3)$,
then $v_{n,j}$ is an eigenvector of $T_n(a)$
associated with the eigenvalue $\la_{n,j}$.
We are going to explain this formula and analyze the asymptotic behavior of $v_{n,j}$ in another paper.
\end{rem}

\begin{rem}[another numbering of the eigenvalues]
In~\eqref{eq:initial_approximation},
\eqref{eq:lambda_asympt_not_too_close_to_the_origin},
and~\eqref{eq:main_eigenvector},
we have an explicit dependence on $r\eqdef\remainder(n,3)$.
Numbering the eigenvalues in descending order
(that is, starting from the eigenvalues close to $L$)
and making the change of variable
$\al'=\frac{1}{3}-\al$ in the functions $\tht$, $p$, $q$, we could avoid the explicit dependence on $r$ and simplify some formulas;
see details in Remark~\ref{rem:decreasing_ordering}
and Corollary~\ref{cor:lambda_asympt_not_too_close_to_the_origin_reverse}.
Nevertheless, we prefer to pay more attention to the eigenvalues close to the origin.
This decision simplifies Theorems~\ref{thm:asymptotic_distribution} and~\ref{thm:lambda_asympt_close_to_the_origin}.
\end{rem}

\begin{rem}[comparison to \cite{BoettcherGrudskyMaksimenko2010,BogoyaGascaGrudsky2024,BogoyaGascaGrudsky2025}]
\label{rem:comparison}
The topic and the methods of this paper
have some similarities with \cite{BoettcherGrudskyMaksimenko2010} and~\cite{BogoyaGascaGrudsky2024,BogoyaGascaGrudsky2025}.
Nevertheless, here we have met and successfully solved the following new challenges.
\begin{itemize}
\item
Widom's formula~\eqref{eq:Widom_det}
applied to
$\la I_n-T_n(a)$
yields three terms.
For $\la$ close to zero,
unlike the situation in~\cite{BoettcherGrudskyMaksimenko2010,BogoyaGascaGrudsky2024,BogoyaGascaGrudsky2025}, 
the two bigger terms 
(which yield a trigonometric expression)
are not much bigger than the third term (which yields an exponential expression).
So, the third term plays an essential role and requires a special attention.

\item
The asymptotic expansion of the eigenvalues near the origin
(Theorem~\ref{thm:lambda_asympt_close_to_the_origin})
is much more complicated than the asymptotic expansions studied in \cite{BoettcherGrudskyMaksimenko2010,BogoyaGascaGrudsky2024,BogoyaGascaGrudsky2025}.
We present the main ideas in an abstract form (Section~\ref{sec:scheme_for_solving_equations}) and than apply them to our specific equations.

\item 
The behavior of the eigenvalues near the origin essentially depends on $r$,
where $r$ is the remainder of division of $n$ by $3$.
We have found a way to treat the corresponding three possible cases 
($r=0$, $r=1$, and $r=2$)
simultaneaosly in most situations,
including $r$ as a parameter in some equations
and avoiding separation into three different cases.

\item
Since the generating function $a$ in this paper has a very simple form,
it has been a natural task to obtain explicit expressions for the functions participating in the main theorems ($F$, $G$, $A_0$, $A_1$, $A_2$, etc.)
and good conditions for the analytic results;
for example, condition ``for all $n\ge 3$''
instead of ``for $n$ large enough''
in Theorem~\ref{thm:char_equation_as_fixed_point}.
We have made great efforts to obtain these explicit expressions and quite precise conditions.
In particular, we have used inverse hyperbolic functions to solve the cubic equation defining $z_1,z_2,z_3$ and get an explicit form of $F$.
\end{itemize}
\end{rem}

The rest of the paper contains the proofs of the theorems.
In Section~\ref{sec:symmetry_and_origin},
we show the symmetry of the spectrum under $\frac{2\pi}{3}$-rotations and prove that the origin is an eigenvalue, if $r\in\{1,2\}$.
In Section~\ref{sec:roots_of_lambda_minus_a},
we make several changes of variables
and compute the roots of the Laurent polynomial $\la-a$.
In Section~\ref{sec:char_pol},
we apply these changes of variables to the characteristic polynomial of $T_n(a)$.
In Section~\ref{sec:localization_of_spectrum},
we prove Theorem~\ref{thm:localization_of_spectrum}.
In Section~\ref{sec:analysis_of_transformed_char_equation_and_fixed_point}, we transform the characteristic equation to the form~\eqref{eq:maineq} and prove
Theorem~\ref{thm:char_equation_as_fixed_point}.
Section~\ref{sec:asymptotic_distribution}
contains a proof of Theorem~\ref{thm:asymptotic_distribution}.
In Section~\ref{sec:scheme_for_solving_equations},
we provide a general scheme
to represent solutions
of equations of the form
$\Omega(z(h),h)=0$
as asymptotic expansions by the small parameter $h$.
In Sections~\ref{sec:asympt_expansion_not_too_close_to_the_origin}
and~\ref{sec:close_to_the_origin},
we prove Theorems~\ref{thm:lambda_asympt_not_too_close_to_the_origin} and~\ref{thm:lambda_asympt_close_to_the_origin},
respectively.
Section~\ref{sec:numerical_tests} is devoted to numerical tests.

In Proposition~\ref{prop:trivial_generalization}, we provide a trivial generalization to the matrices of the form $T_n(b)$,
where
$b(t)=b_{-1}t^{-1}+b_0+b_2 t^2$
with $b_{-1},b_0,b_2\in\bC$,
$b_{-1}\ne0$, $b_2\ne0$.

\section{Symmetry and the origin}
\label{sec:symmetry_and_origin}

In this section, we prove a part of Theorem~\ref{thm:localization_of_spectrum}
related to the symmetry and the eigenvalue $0$.
Some results of this section are particular cases of results obtained by McMillen~\cite{McMillen2009}
for more general matrix families.
For completeness, we give short proofs for our particular case.
We hope that Proposition~\ref{prop:origin_as_an_eigenvalue_of_Ta} is new.

\begin{rem}
\label{rem:Schmidt_and_Spitzer_trick}
In this remark,
we recall an idea by Schmidt and Spitzer\cite[page~19]{SchmidtSpitzer1960}.
For every $\rho$ in $\bC\setminus\{0\}$
and every Laurent polynomial $b$,
we denote by $b_\rho$
the Laurent polynomial defined by
\[
b_\rho(t)\eqdef b(\rho t).
\]
Let $\diag(d_1,d_2,\ldots,d_n)$ be the diagonal matrix with components $d_1,d_2,\ldots,d_n$ on the main diagonal.
Then,
\begin{equation}
\label{eq:SchmidtSpitzerIdentity}
T_n(b_{\rho})
=\diag(1,\rho,\ldots,\rho^{n-1})\,T_n(b)\,\diag(1,\rho^{-1},\ldots,\rho^{-n+1})
\end{equation}
and hence
\begin{equation}
\label{eq:spectrum_invariance}
\spectrum T_n(b_{\rho})
=\spectrum T_n(b).
\end{equation}
\end{rem}

\begin{prop}[symmetry of the spectrum of $T_n(a)$]
\label{prop:symmetry}
For every $n$, the spectrum of $T_n(a)$
is invariant under $2\pi/3$-rotations around the origin.
\end{prop}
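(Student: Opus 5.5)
We will apply Remark~\ref{rem:Schmidt_and_Spitzer_trick} with $\rho\eqdef\eps_3$. Since $\eps_3^{-1}=\eps_3^2$ and $\eps_3^2=\eps_3^{-1}$, the dilated symbol is
\[
a_{\eps_3}(t)=a(\eps_3 t)=\eps_3^{-1}t^{-1}+\eps_3^{2}t^2
=\eps_3^{2}\bigl(t^{-1}+t^2\bigr)=\eps_3^{2}\,a(t).
\]
The map $b\mapsto T_n(b)$ is linear, so $T_n(a_{\eps_3})=\eps_3^2\,T_n(a)$.

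By~\eqref{eq:spectrum_invariance},
\[
\spectrum T_n(a)=\spectrum T_n(a_{\eps_3})=\spectrum\bigl(\eps_3^2 T_n(a)\bigr)=\eps_3^2\,\spectrum T_n(a).
\]
The spectrum is therefore invariant under multiplication by $\eps_3^2$, which is rotation by $4\pi/3$. Applying this invariance twice shows it is also invariant under multiplication by $\eps_3^4=\eps_3$, which is rotation by $2\pi/3$.

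The identity~\eqref{eq:SchmidtSpitzerIdentity} is a similarity transformation, so it preserves more than the set of eigenvalues. It shows that $T_n(a)$ is similar to $\eps_3^2 T_n(a)$, which gives
\[
\det\bigl(\la I_n-T_n(a)\bigr)=\det\bigl(\la I_n-\eps_3^2T_n(a)\bigr).
\]
Hence the rotation invariance holds with algebraic multiplicities too. The same similarity gives the statement for geometric multiplicities, which will be convenient when the multiplicities in Theorem~\ref{thm:localization_of_spectrum} are counted.

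There is no real obstacle here. The only point requiring care is the one-line computation $a(\eps_3 t)=\eps_3^2 a(t)$, which relies on the exponents $-1$ and $2$ being congruent modulo $3$.
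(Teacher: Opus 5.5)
Your proof is correct and follows the paper's own argument: you apply the Schmidt--Spitzer similarity with $\rho=\eps_3$ and use $a(\eps_3 t)=\eps_3^{2}a(t)=\eps_3^{-1}a(t)$, which gives invariance of the spectrum under multiplication by $\eps_3^{-1}$ and hence also by $\eps_3$. Your added remark that the similarity also preserves algebraic and geometric multiplicities is correct, and it matches the spirit of the paper's next result, Proposition~\ref{prop:rotation_eigenvectors}.
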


\begin{proof}
Applying~\eqref{eq:spectrum_invariance}
to $b=a$ and $\rho=\eps_3=\enumber^{\frac{2\pi\imagunit}{3}}$
we see that
$\spectrum T_n(a)$ coincides with $\spectrum T_n(a_{\eps_3})$.
Furthermore, the polynomial $a$ has the following symmetry:
\begin{equation}\label{eq:symmetry_3}
a_{\eps_3}(t)
=a(\eps_3 t)
=\eps_3^{-1} t^{-1}+\eps_3^2 t^2
=\eps_3^{-1} t^{-1}+\eps_3^{-1}t^2
=\eps_3^{-1} a(t).
\end{equation}
Therefore,
$\spectrum T_n(a_{\eps_3})
=\eps_{3}^{-1}\spectrum T_n(a)$.
\end{proof}

\begin{prop}
\label{prop:rotation_eigenvectors}
Let $n\in\bN$, $n\ge3$,
$\la$ be an eigenvalue of $T_n(a)$
and $v=[v_k]_{k=1}^n$ be an associated eigenvector.
Then,
\[
w \eqdef \bigl[ \eps_3^{k-1} v_k \bigr]_{k=1}^n
\]
is an eigenvector associated with $\eps_3 \la$.
\end{prop}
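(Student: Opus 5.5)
We verify the claim directly from the entries of $T_n(a)$, using the similarity from Remark~\ref{rem:Schmidt_and_Spitzer_trick} and the symmetry~\eqref{eq:symmetry_3}. Put $D\eqdef\diag(1,\eps_3,\ldots,\eps_3^{n-1})$, so that $w=Dv$. By~\eqref{eq:SchmidtSpitzerIdentity} with $b=a$ and $\rho=\eps_3$, we have $T_n(a_{\eps_3})=D\,T_n(a)\,D^{-1}$. By~\eqref{eq:symmetry_3}, $a_{\eps_3}=\eps_3^{-1}a$, and therefore $T_n(a_{\eps_3})=\eps_3^{-1}T_n(a)$, since $T_n$ is linear in the symbol. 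Combining these two identities gives
\[
T_n(a)\,D = \eps_3\, D\, T_n(a).
\]
Applying both sides to $v$ yields
\[
T_n(a)w=\eps_3 D T_n(a)v=\eps_3\la Dv=\eps_3\la w.
\]
Since $D$ is invertible and $v\ne0$, the vector $w$ is nonzero. Hence $w$ is an eigenvector of $T_n(a)$ associated with $\eps_3\la$.

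The only point that needs care is the direction of conjugation in~\eqref{eq:SchmidtSpitzerIdentity}, because it decides whether the new eigenvalue is $\eps_3\la$ or $\eps_3^{-1}\la$. We check it entrywise. The $(j,k)$ entry of $D\,T_n(a)\,D^{-1}$ is $\eps_3^{\,j-k}a_{j-k}$. The nonzero coefficients of $a$ are $a_{-1}$ and $a_2$, so the relevant exponents are $j-k=-1$ and $j-k=2$. In both cases $\eps_3^{\,j-k}=\eps_3^{-1}$, because $\eps_3^2=\eps_3^{-1}$. Thus $D\,T_n(a)\,D^{-1}=\eps_3^{-1}T_n(a)$, which confirms the sign used above.

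We can also check the result directly on components. Row $j$ of $T_n(a)$ gives
\[
(T_n(a)w)_j=w_{j+1}+w_{j-2},
\]
where any term whose index lies outside $\{1,\ldots,n\}$ is omitted. Substituting $w_k=\eps_3^{k-1}v_k$,
\[
w_{j+1}+w_{j-2}=\eps_3^{j}v_{j+1}+\eps_3^{j-3}v_{j-2}=\eps_3^{j}\,(v_{j+1}+v_{j-2}).
\]
This equals $\eps_3^{j}\la v_j=\eps_3\la\, w_j$, as required. The boundary rows need no separate treatment, because the same terms are omitted on both sides.
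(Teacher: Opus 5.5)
Your proof is correct and takes the same route as the paper. You combine the Schmidt--Spitzer similarity with $a_{\eps_3}=\eps_3^{-1}a$ to get $T_n(a)=\eps_3 D\,T_n(a)\,D^{-1}$, then apply this to $w=Dv$; your entrywise check of the conjugation direction, the componentwise verification, and the observation $w\ne0$ are valid additional confirmations that the paper leaves implicit.
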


\begin{proof}
We suppose that $T_n(a)v=\la v$
and $v\ne 0$.
Let
$D_n \eqdef \diag(1,\eps_3,\ldots,\eps_3^{n-1})$.
Then,
\[
T_n(a)
=\eps_3 T_n(\eps_3^{-1} a)
=\eps_3 T_n(a_{\eps_3})
=\eps_3 D_n T_n(a) D_n^{-1}.
\]
Since $w=D_n v$, we get
\[
T_n(a) w
= \eps_3 D_n T_n(a) D_n^{-1} D_n v
= \eps_3 D_n T_n(a) v
= \eps_3 D_n \la v
= \eps_3 \la\,w.
\qedhere
\]
\end{proof}

There are several equivalent formulas for the determinants of Toeplitz matrices in terms of the roots of the generating symbol,
see
\cite{Widom1958,BaxterSchmidt1961,Trench1985eig}
or~\cite[Chapter~2]{BoettcherGrudsky2005}.
Moreover, the determinants and minors of Toeplitz matrices can be written in 
terms of Schur or skew Schur polynomials,
see~\cite{Alexandersson2012,MaximenkoMoctezuma2017}.
For tetradiagonal Toeplitz matrices, if the generating symbol is of the form
\begin{equation}
\label{eq:tetradiagonal_general_symbol}
b(t)
=\sum_{k=-1}^2 b_k t^k
=t^{-1}(t-z_1)(t-z_2)(t-z_3),
\end{equation}
with $b_{-1}\ne0$, $b_2\ne0$,
and $z_1,z_2,z_3$ pairwise different nonzero numbers,
then Widom's formula simplifies to
\begin{equation}
\label{eq:Widom_det}
\det T_n(b)
=b_2^n
\left(\frac{(z_1 z_2)^{n+1}}{(z_1-z_3)(z_2-z_3)}
+\frac{(z_1z_3)^{n+1}}{(z_1-z_2)(z_3-z_2)}
+\frac{(z_2z_3)^{n+1}}{(z_2-z_1)(z_3-z_1)}
\right).
\end{equation}
Applying~\eqref{eq:Widom_det}
with $\la-b$ instead of $b$,
we get
\begin{equation}
\label{eq:Widom_det_lambda_minus_b}
\det T_n(\la-b)
=(-b_2)^n
\left(\frac{(z_1 z_2)^{n+1}}{(z_1-z_3)(z_2-z_3)}
+\frac{(z_1z_3)^{n+1}}{(z_1-z_2)(z_3-z_2)}
+\frac{(z_2z_3)^{n+1}}{(z_2-z_1)(z_3-z_1)}
\right),
\end{equation}
where $z_1,z_2,z_3$ are the roots of $\la-b$.

Instead of a general $b$,
we consider the particular Laurent polynomial
$a(t)=t^{-1}+t^2$.
We denote by $D_n$ the characteristic polynomial of $T_n(a)$:
\begin{equation}
\label{eq:charpol_notation}
D_n(\la)
\eqdef \det (\la I_n-T_n(a));
\quad\text{that is},\quad
D_n(\la)=\det T_n(\la-a).
\end{equation}
Here are the characteristic polynomials of $T_n(a)$ for small values of $n$:
\begin{equation}
\label{eq:char_pol_small_n}
D_3(\la)
=\la^3-1,
\qquad
D_4(\la)
=\la^4-2\la,
\qquad
D_5(\la)
=\la^5-3\la^2.
\end{equation}

\begin{prop}[recursive formula for the characteristic polynomials]
\label{prop:charpol_recursive}
For each $n\ge 3$ and each $\la$ in $\bC$,
\begin{equation}
\label{eq:charpol_recursive}
D_n(\la)
= \la D_{n-1}(\la)
- D_{n-3}(\la).
\end{equation}
\end{prop}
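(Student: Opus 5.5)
The plan is a direct cofactor (Laplace) expansion of $D_n(\la)=\det(\la I_n-T_n(a))$ along the first column, with the convention $D_0(\la)\eqdef 1$, $D_1(\la)=\la$, $D_2(\la)=\la^2$ for the small cases. The structure of the matrix makes this expansion very short.

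\textbf{Step 1: read off the matrix.} Since $a(t)=t^{-1}+t^2$, the matrix $T_n(a)=[a_{j-k}]$ has ones exactly at positions with $j-k=-1$ (superdiagonal) and $j-k=2$ (second subdiagonal). Hence the first column of $\la I_n-T_n(a)$ is $(\la,0,-1,0,\ldots,0)^\top$. Expanding along it gives exactly two terms:
\[
D_n(\la)=\la\,\det M_{11}+(-1)^{3+1}(-1)\det M_{31},
\]
where $M_{jk}$ denotes the submatrix with row $j$ and column $k$ deleted.

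\textbf{Step 2: the first minor.} Deleting row $1$ and column $1$ leaves the Toeplitz matrix $\la I_{n-1}-T_{n-1}(a)$. Therefore $\det M_{11}=D_{n-1}(\la)$, and the first term is $\la D_{n-1}(\la)$.

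\textbf{Step 3: the second minor.} This is the only step requiring care. In $M_{31}$ we keep rows $1,2,4,\ldots,n$ and columns $2,\ldots,n$.
\begin{itemize}
\item Row $1$ restricted to columns $2,\ldots,n$ is $(-1,0,\ldots,0)$.
\item Row $2$ restricted to columns $2,\ldots,n$ is $(\la,-1,0,\ldots,0)$; its only nonzero entries are in original columns $2$ and $3$.
\end{itemize}
So $M_{31}$ is block lower triangular. The upper-left $2\times2$ block, in rows $1,2$ and columns $2,3$, is $\matr{cc}{-1&0\\ \la&-1}$ with determinant $1$. The upper-right block, in rows $1,2$ and columns $4,\ldots,n$, is zero. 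The lower-right block, in rows $4,\ldots,n$ and columns $4,\ldots,n$, is again $\la I_{n-3}-T_{n-3}(a)$ by the Toeplitz structure. Hence
\[
\det M_{31}=1\cdot D_{n-3}(\la),
\]
and the second term equals $-D_{n-3}(\la)$. This proves the recursion $D_n(\la)=\la D_{n-1}(\la)-D_{n-3}(\la)$.

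\textbf{Step 4: boundary cases.} For $n=3$ the lower-right block is empty, so its determinant is $1=D_0$. Then the recursion gives $\la\cdot\la^2-1=\la^3-1$, which agrees with \eqref{eq:char_pol_small_n}. For $n=4$ and $n=5$ the same computation reproduces $D_4(\la)=\la^4-2\la$ and $D_5(\la)=\la^5-3\la^2$, which serves as a sanity check.

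The main obstacle is only bookkeeping in Step 3: verifying the block-triangular shape and getting the signs right. No earlier result of the paper is needed.
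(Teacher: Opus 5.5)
Your proof is correct and follows the paper's own route: the paper simply says the recursion comes from expanding $\det T_n(\la-a)$ by cofactors (and mentions Widom's formula as an alternative). Your first-column expansion, with $M_{11}$ Toeplitz and $M_{31}$ block lower triangular with determinant $1\cdot D_{n-3}(\la)$, supplies those details correctly, including the convention $D_0=1$ that the case $n=3$ requires.
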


\begin{proof}
This recursive formula is easy to derive by expanding the determinant of $T_n(\la-a)$ by cofactors.
Another way to verify~\eqref{eq:charpol_recursive} is 
applying~\eqref{eq:Widom_det}
to $b(t)=-t^{-1}+\la t^0-t^2$
and taking into account that $z_1 z_2 z_3=-1$ and
$z_j^3-\la z_j+1=0$.
\end{proof}

\begin{prop}[explicit formula for the coefficients of the characteristic polynomial]
\label{prop:charpol_coefficients_explicit}
For each $n\ge 3$ with $n=3m+r$,
\begin{equation}
\label{eq:D_via_Q}
D_n(\la)
= \la^r Q_n(\la^3),
\end{equation}
where
\begin{equation}
\label{eq:Q_explicit}
Q_n(x)=\sum_{k=0}^m
(-1)^k
\binom{n-2k}{k} x^{m-k}.
\end{equation}
\end{prop}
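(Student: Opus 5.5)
Our plan is to prove the formula by induction on $n$, using the recursion~\eqref{eq:charpol_recursive} from Proposition~\ref{prop:charpol_recursive} together with Pascal's rule for binomial coefficients. We define $E_n(\la)\eqdef \la^r\sum_{k=0}^{m}(-1)^k\binom{n-2k}{k}\la^{3(m-k)}$. Since $r+3(m-k)=n-3k$, this can be written uniformly as
\[
E_n(\la)=\sum_{k\ge 0}(-1)^k\binom{n-2k}{k}\la^{n-3k},
\]
where we use the convention $\binom{N}{k}=0$ for $k>N$. For $k>m$ we have $n-2k<k$, so these terms vanish and the sum is finite. This uniform expression removes the dependence on $r$ and $m$, so no case analysis on $r$ is needed.

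\textbf{Base cases.} By~\eqref{eq:char_pol_small_n}, $D_3=\la^3-1$, $D_4=\la^4-2\la$, $D_5=\la^5-3\la^2$. From the uniform formula, $E_3=\la^3-\binom{1}{1}=\la^3-1$, $E_4=\la^4-\binom{2}{1}\la=\la^4-2\la$, and $E_5=\la^5-\binom{3}{1}\la^2=\la^5-3\la^2$. Thus $D_n=E_n$ for $n=3,4,5$.

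\textbf{Inductive step.} For $n\ge6$, assume $D_{n-1}=E_{n-1}$ and $D_{n-3}=E_{n-3}$. Then
\[
\la E_{n-1}-E_{n-3}
=\sum_{k\ge0}(-1)^k\binom{n-1-2k}{k}\la^{n-3k}
-\sum_{k\ge0}(-1)^k\binom{n-3-2k}{k}\la^{n-3-3k}.
\]
In the second sum we shift the index $k\mapsto k-1$. It becomes $+\sum_{k\ge1}(-1)^{k}\binom{n-1-2k}{k-1}\la^{n-3k}$, because $-(-1)^{k-1}=(-1)^k$ and $n-3-2(k-1)=n-1-2k$. The coefficient of $\la^{n-3k}$ is therefore
\[
(-1)^k\left[\binom{n-1-2k}{k}+\binom{n-1-2k}{k-1}\right]=(-1)^k\binom{n-2k}{k}
\]
by Pascal's rule, and this also holds for $k=0$. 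Hence $D_n=\la D_{n-1}-D_{n-3}=E_n$. Writing $E_n(\la)=\la^rQ_n(\la^3)$ then gives~\eqref{eq:D_via_Q}.

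\textbf{Where care is needed.} The main point to check is the boundary of the summation range, namely that Pascal's rule remains valid at the top index. This includes the case $n-1-2k=k-1$, i.e.\ $n=3k$, where the first binomial coefficient is $0$ and the second is $1=\binom{k}{k}$. Pascal's identity $\binom{N}{k}+\binom{N}{k-1}=\binom{N+1}{k}$ holds for all integers $N\ge0$ and $k\ge1$ under the zero convention, so this case is covered. The only remaining concern is that no negative upper index appears. For $k\le m+1$ we have $n-1-2k\ge 3m+r-1-2m-2=m+r-3$, which could be negative when $m$ is small. However, all terms with $k>m$ vanish on both sides, so it suffices to restrict to $k\le m$. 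In that range $n-1-2k\ge m-1\ge0$, so every binomial coefficient in the argument has a nonnegative upper index.
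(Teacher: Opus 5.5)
Your proof is correct and follows essentially the same route as the paper: strong induction on $n$ from the base cases $D_3,D_4,D_5$, using the recursion $D_n=\la D_{n-1}-D_{n-3}$ of Proposition~\ref{prop:charpol_recursive} together with Pascal's rule. The one difference is bookkeeping. Your uniform exponent $n-3k$ does in a single computation what the paper does with three separate recursions for $Q_{3m}$, $Q_{3m+1}$, $Q_{3m+2}$, and your treatment of the top index $k=m$ and of negative upper indices is sound.
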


\begin{proof}
Using 
\eqref{eq:char_pol_small_n},
\eqref{eq:charpol_recursive},
and mathematical induction,
it is easy to see that $D_n$
can be written in the form~\eqref{eq:D_via_Q},
and the polynomials $Q_n$ satisfy the following recursive formulas:
\begin{align*}
Q_{3m}(x)
&=x\,Q_{3m-1}(x)
-Q_{3m-3}(x),
\\
Q_{3m+1}(x)
&=Q_{3m}(x)-Q_{3m-2}(x),
\\
Q_{3m+2}(x)
&=Q_{3m+1}(x)-Q_{3m-1}(x).
\end{align*}
Applying mathematical induction,
it is easy to verify the explicit formula~\eqref{eq:Q_explicit}.
\end{proof}

Prof. Paul Barry explained to us by email
that the polynomial sequence defined by~\eqref{eq:char_pol_small_n}
and~\eqref{prop:charpol_recursive}
can be computed by applying the theory of Riordan arrays,
see \cite{Barry2022, Shapiro2022Riordan}.
We hope that connections between some Toeplitz determinants and some Riordan arrays can be useful in future works.
Nevertheless, in the other sections of this paper, we prefer using~\eqref{eq:Widom_det} rather than Proposition~\ref{prop:charpol_coefficients_explicit}.

\begin{prop}
\label{prop:origin_as_an_eigenvalue_of_Ta}
If $r=0$,
then $0$ is not an eigenvalue of $T_n(a)$.
If $r\in\{1,2\}$,
then $0$ is an eigenvalue of $T_n(a)$ of algebraic multiplicity $r$,
and the following vector $u$
is an associated eigenvector:
\begin{equation}
\label{eq:eigenvector_0}
u \eqdef \bigl[1,0,0,-1,0,0,1,0,0,-1,\ldots\bigr]^\top.
\end{equation}
If $r=2$, then the following vector $w$
is a generalized eigenvector of $T_n(a)$ associated with the eigenvalue $0$
and satisfying $T_n(a)w=u$:
\begin{equation}
\label{eq:generalized_eigenvector_0}
w=\bigl[
0, 1, 0, 0, -2, 0, 0, 3, 0, 0, -4, 0, 0, \ldots,
(-1)^m (m+1)\bigr]^\top.
\end{equation}
\end{prop}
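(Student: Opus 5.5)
The algebraic multiplicity will come from Proposition~\ref{prop:charpol_coefficients_explicit}, and the two vectors will be checked by direct multiplication. We have $D_n(\la)=\la^r Q_n(\la^3)$, and the constant term of $Q_n$ is the $k=m$ term of~\eqref{eq:Q_explicit}, namely
\[
Q_n(0)=(-1)^m\binom{n-2m}{m}=(-1)^m\binom{m+r}{m}\neq 0.
\]
So $\la=0$ is a root of $D_n$ of multiplicity exactly $r$. For $r=0$ this means $D_n(0)\ne0$, so $0$ is not an eigenvalue; for $r\in\{1,2\}$ the algebraic multiplicity is $r$.

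Next we verify $T_n(a)u=0$. Since $T_n(a)=[a_{j-k}]$ with $a_{-1}=a_2=1$, we have $(T_n(a)x)_j=x_{j+1}+x_{j-2}$, where $x_0=x_{-1}=0$ and $x_{n+1}=0$. For $u$, the nonzero entries sit at positions $k=3i+1$ with $u_{3i+1}=(-1)^i$. Consider $(Tu)_j=u_{j+1}+u_{j-2}$:
\begin{itemize}
\item If $j\equiv 0\pmod 3$, say $j=3i$, then $u_{3i+1}+u_{3i-2}=(-1)^i+(-1)^{i-1}=0$.
\item If $j=3$, the formula becomes $u_4+u_1=-1+1=0$ (this is the same computation with $i=1$).
\item If $j\equiv 1$ or $2\pmod 3$, both indices $j+1$ and $j-2$ are $\not\equiv1\pmod3$, so both terms vanish.
\end{itemize}
The boundary needs care only at $j=n$ when $n\equiv0\pmod3$. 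In that case $u_{n+1}$ would be needed, but it is truncated to zero, so $(Tu)_n=u_{n-2}\neq0$. This is consistent with $0$ not being an eigenvalue when $r=0$. When $r\in\{1,2\}$ and $n\not\equiv0$, the row $j=n$ gives $u_{n-2}+0$, and $n-2\not\equiv1\pmod3$, so this is $0$. We also have $u\ne0$. So $u$ is an eigenvector exactly when $r\in\{1,2\}$, and this truncation issue at $j=n$ is the one delicate point.

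For $r=2$ we verify $T_n(a)w=u$. Write $n=3m+2$. The nonzero entries of $w$ are $w_{3i+2}=(-1)^i(i+1)$ for $i=0,\dots,m$, and the last one is at position $3m+2=n$. Compute $(Tw)_j=w_{j+1}+w_{j-2}$:
\begin{itemize}
\item If $j=3i+1$, this is $w_{3i+2}+w_{3i-1}=(-1)^i(i+1)+(-1)^{i-1}i=(-1)^i=u_j$. For $i=0$ it reads $w_2=1$.
\item If $j\equiv 0$ or $2\pmod 3$, neither $j+1$ nor $j-2$ is $\equiv2\pmod3$, so $(Tw)_j=0=u_j$.
\item At the boundary $j=n=3m+2\equiv 2$, the result is $0=u_n$. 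Note that $n\equiv2$ means $u_n=0$.
\end{itemize}
Hence $T_n(a)w=u$ and $T_n(a)^2w=0$, so $w$ is a generalized eigenvector.

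The main obstacle is only careful index bookkeeping at the two ends of the vector. The binomial nonvanishing argument above already settles the multiplicity claims completely.
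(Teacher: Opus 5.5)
Your proof is correct and follows the paper's route. The multiplicity comes from $D_n(\la)=\la^r Q_n(\la^3)$ in Proposition~\ref{prop:charpol_coefficients_explicit}, where you make explicit that $Q_n(0)=(-1)^m\binom{m+r}{m}\ne0$, which the paper leaves implicit. The identities $T_n(a)u=0$ and $T_n(a)w=u$ are then checked by direct multiplication via $(T_n(a)x)_j=x_{j+1}+x_{j-2}$. Your indexing $w_{3i+2}=(-1)^i(i+1)$ is also the right one: the paper's proof writes the support as $j=2k+2$, which is a misprint for $j=3k-1$.
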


\begin{proof}
1. Proposition~\ref{prop:charpol_coefficients_explicit} implies that if $r=0$, then $0$ is not an eigenvalue of $T_n(a)$,
and if $r\in\{1,2\}$,
then $0$ is an eigenvalue of $T_n(a)$ of algebraic multiplicity $r$.

2. Another way is to apply~\eqref{eq:Widom_det}.
The roots of the rational function $a(t)=t^2+t^{-1}$ are
$-1$, $\enumber^{\imunit\pi/3}$, $\enumber^{-\imunit\pi/3}$.
Therefore,
\[
\det(T_n(a))
=\frac{1}{3}\left(1+2\cos\frac{2n\pi}{3}\right).
\]
Therefore,
$\det(T_n(a))=0$
if and only if $r\ne 0$.
This reasoning is not sufficient to establish the algebraic multiplicity.

3. Suppose that $r\in\{1,2\}$.
We define $u=[u_j]_{j=1}^n$ by~\eqref{eq:eigenvector_0}.
A direct computation shows that $T_n(a)u=0$.

4. Finally, consider the case $r=2$.
Define $v=[v_j]_{j=1}^n\in\bC^n$ by 
\[
v_j \eqdef
\begin{cases}
(-1)^{k-1}\,k, & \text{if}\ j=2k+2, \\
0, & \text{otherwise}.
\end{cases}
\]
In other words, $v$ is given by~\eqref{eq:generalized_eigenvector_0}.
We see that $v$ is not a multiple of $u$
and $T_n(a)v=u$,
which means that $v$ is a generalized eigenvector.
\end{proof}

In Section~\ref{sec:localization_of_spectrum},
we will show that $T_n(a)$ has $3m$ nonzero eigenvalues.

Using the idea of Schmidt and Spitzer stated in Remark~\ref{rem:Schmidt_and_Spitzer_trick},
it is easy to pass from $T_n(a)$ to a slightly more general family of Toeplitz matrices.

\begin{prop}
\label{prop:trivial_generalization}
Let $b$ be a Laurent polynomial of the form
\[
b(t)=b_{-1}t^{-1}+b_0+b_2 t^2,
\]
where $b_{-1},b_0,b_2\in\bC$,
$b_{-1}\ne0$ and $b_2\ne0$.
Let $\rho\in\bC$ such that $\rho^3=b_2/b_{-1}$.
Then
\[
\spectrum T_n(b)
=b_0 + b_{-1}\rho \spectrum T_n(a).
\]
Moreover, if $\la\in\spectrum T_n(a)$,
$v=\bigl[v_k\bigr]_{k=1}^n\in\bC^n$,
and $T_n(a)v=\la v$,
then $T_n(b)w=(b_0+b_{-1}\rho\la)w$
for $w\eqdef\bigl[r^{k-1}v_k\bigr]_{k=1}^n$.
\end{prop}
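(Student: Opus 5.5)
The plan is to reduce everything to Remark~\ref{rem:Schmidt_and_Spitzer_trick} and one scalar identity: show that $b$ is an affine image of a dilated copy of $a$, and then transport spectra and eigenvectors through the diagonal similarity~\eqref{eq:SchmidtSpitzerIdentity}. Note that the vector in the statement should read $w=[\rho^{k-1}v_k]_{k=1}^n$; the letter $r$ there is a typo for $\rho$, since $r$ already denotes the remainder of $n$ modulo $3$.

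\textbf{Step 1: the scalar identity.} I would compute
\[
b_0+b_{-1}\rho\,a(\rho^{-1}t)=b_0+b_{-1}t^{-1}+b_{-1}\rho^{-1}t^2 .
\]
This equals $b(t)$ exactly when $b_{-1}\rho^{-1}=b_2$, that is, $\rho=b_{-1}/b_2$. That condition does not match the hypothesis $\rho^3=b_2/b_{-1}$, so I would redo the computation with the dilation going the other way. With $\rho^3=b_2/b_{-1}$ one gets
\[
a(\rho t)=\rho^{-1}t^{-1}+\rho^2t^2=\rho^{-1}\bigl(t^{-1}+\rho^3t^2\bigr),
\]
and therefore
\[
b_{-1}\rho\,a(\rho t)=b_{-1}t^{-1}+b_{-1}\rho^3t^2=b_{-1}t^{-1}+b_2t^2 .
\]
Hence
\[
b=b_0+b_{-1}\rho\,a_\rho ,
\]
with $a_\rho(t)=a(\rho t)$ as in Remark~\ref{rem:Schmidt_and_Spitzer_trick}.

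\textbf{Step 2: transfer to matrices.} The map $c\mapsto T_n(c)$ is linear, and $T_n(1)=I_n$. So Step~1 gives
\[
T_n(b)=b_0I_n+b_{-1}\rho\,T_n(a_\rho).
\]
By~\eqref{eq:SchmidtSpitzerIdentity}, $T_n(a_\rho)=\Delta T_n(a)\Delta^{-1}$ with $\Delta=\diag(1,\rho,\ldots,\rho^{n-1})$. Consequently
\[
T_n(b)=\Delta\bigl(b_0I_n+b_{-1}\rho\,T_n(a)\bigr)\Delta^{-1}.
\]
Because similar matrices have the same spectrum, this yields $\spectrum T_n(b)=b_0+b_{-1}\rho\spectrum T_n(a)$.

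\textbf{Step 3: eigenvectors.} Suppose $T_n(a)v=\la v$ and put $w=\Delta v=[\rho^{k-1}v_k]_{k=1}^n$. Then
\[
T_n(b)w=\Delta\bigl(b_0+b_{-1}\rho T_n(a)\bigr)v=(b_0+b_{-1}\rho\la)\,\Delta v=(b_0+b_{-1}\rho\la)\,w ,
\]
and $w\ne0$ because $\Delta$ is invertible.

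\textbf{Main obstacle.} There is no real difficulty here. The only points needing care are getting the direction of the dilation right in Step~1, which the first attempted computation shows is easy to reverse, and reading the typo in the definition of $w$ as $\rho$ rather than the remainder $r$.
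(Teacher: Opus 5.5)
Your proposal is correct and follows the paper's proof essentially verbatim. You write $b=b_0+b_{-1}\rho\,a_\rho$ and conjugate by $\diag(1,\rho,\ldots,\rho^{n-1})$ via \eqref{eq:SchmidtSpitzerIdentity}, which gives both the spectral identity and the eigenvector $w=[\rho^{k-1}v_k]_{k=1}^n$. You are right that $r^{k-1}$ in the statement is a typo for $\rho^{k-1}$; the paper's own proof uses exactly this $D_n$. Delete the abandoned first computation in Step~1: it plays no role, and it is also miscalculated, since $b_{-1}\rho\,a(\rho^{-1}t)=b_{-1}\rho^2t^{-1}+b_{-1}\rho^{-1}t^2$.
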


\begin{proof}
Indeed,
\begin{align*}
b(t)
&= b_0 + b_{-1}
\left(t^{-1} + \frac{b_2}{b_{-1}}t^2\right)
=
b_0 + b_{-1} \rho
\left(\rho^{-1}t^{-1}
+ \rho^2 t^2\right)
\\
&=b_0 + b_{-1} \rho\,a(\rho t)
=b_0+b_{-1}\rho\,a_\rho(t).
\end{align*}
Therefore, by~\eqref{eq:SchmidtSpitzerIdentity},
the eigenvalues of $T_n(b)$ can be obtained from the eigenvalues of $T_n(a)$ by applying the function
$z \mapsto b_0 + b_{-1} \rho z$.
The statement about the eigenvectors
can be proved in the same way as Proposition~\ref{prop:rotation_eigenvectors},
but with
$D_n \eqdef \diag(1,\rho,\ldots,\rho^{n-1})$.
\end{proof}

\section{\texorpdfstring{Zeros of $\boldsymbol{\la-a}$ and changes of variables}{Zeros of lambda-a and changes of variables}}
\label{sec:roots_of_lambda_minus_a}

Using the symmetry of the spectra,
we restrict ourselves to the case
when $\la$ belongs to the real positive
part of the set $\SSS(a)$.
In order to apply Widom's formula~\eqref{eq:Widom_det}
and compute $D_n(\la)$,
we need formulas for the roots of the cubic polynomial
\[
t(a(t)-\la)=t^3-\la t+1.
\]
First, we recall that the roots of cubic equations can be expressed in terms of hyperbolic and inverse hyperbolic functions.
We denote by $\arcosh$ the inverse $\cosh$ function acting from $[1,+\infty)$ to $[0,+\infty)$.
Many authors prefer notation $\operatorname{arccosh}$
but the geometric sense of $\arcosh$ is an area, not an arc.

\begin{lem}
\label{lem:cubic_equation_0}
Let $A>1$ and
$\xi\eqdef\frac{1}{3}\arcosh(A)$.
Then, the cubic equation
\begin{equation}
\label{eq:cubic_equation_0}
4x^3-3x-A=0
\end{equation}
has three different roots:
\[
x_{1,2}
=-\frac{1}{2}\cosh(\xi)\pm\imunit\frac{\sqrt{3}}{2}\sinh(\xi),
\qquad
x_3=\cosh(\xi).
\]
\end{lem}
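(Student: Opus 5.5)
The plan is to verify the three candidate roots directly, using the triple-angle identity for the hyperbolic cosine, and then to check that they are pairwise distinct. Since $A>1$, the number $\xi=\frac{1}{3}\arcosh(A)$ is well defined and strictly positive, and $\cosh(3\xi)=A$.

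\emph{The real root.} The identity $\cosh(3\xi)=4\cosh^3(\xi)-3\cosh(\xi)$ gives $4x_3^3-3x_3=A$ for $x_3=\cosh(\xi)$.

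\emph{The complex roots.} Set $\om\eqdef\eps_3^{\pm1}=-\frac{1}{2}\pm\imunit\frac{\sqrt{3}}{2}$. Then
\[
\om\enumber^{\xi}+\om^{-1}\enumber^{-\xi}
=-\frac12(\enumber^\xi+\enumber^{-\xi})\pm\imunit\frac{\sqrt3}{2}(\enumber^\xi-\enumber^{-\xi}),
\]
so
\[
x_{1,2}=\frac{1}{2}\bigl(\om\enumber^{\xi}+\om^{-1}\enumber^{-\xi}\bigr).
\]
Equivalently, $x_{1,2}=\cosh(\xi\pm 2\pi\imunit/3)$. The triple-angle identity $\cosh(3w)=4\cosh^3 w-3\cosh w$ holds for all complex $w$, because it is a polynomial identity in $\enumber^{w}$. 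Applying it with $w=\xi\pm2\pi\imunit/3$ gives
\[
4x^3-3x=\cosh(3\xi\pm2\pi\imunit)=\cosh(3\xi)=A.
\]
So all three numbers solve \eqref{eq:cubic_equation_0}.

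\emph{Distinctness.} Because $\xi>0$, we have $\sinh(\xi)>0$. Hence $x_1$ and $x_2$ have nonzero imaginary parts of opposite signs, so they are distinct from each other and from the real number $x_3$. A cubic polynomial has at most three roots, so these are exactly its three roots.

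There is no real obstacle here. The only points that need attention are rewriting $x_{1,2}$ as $\cosh(\xi\pm2\pi\imunit/3)$ and using $\xi>0$, which comes from $A>1$, to guarantee distinctness.
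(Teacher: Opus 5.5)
Your proof is correct and takes essentially the paper's route: the paper verifies the roots with the triple-angle identity for $\cosh$ and notes that $x_{1,2}=\cosh(\xi\pm 2\pi\imunit/3)$, which is exactly the representation you use. Your explicit distinctness argument via $\sinh(\xi)>0$ is a small addition that the paper leaves implicit.
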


\begin{proof}
This fact is well known
(see, for example, \cite{Holmes2002}).
It can be verified by using the identities
\[
\cosh(3\xi)
=4\cosh^3(\xi)-3\cosh(\xi),\qquad
\sinh(3\xi)
=4\sinh^3(\xi)+3\sinh(\xi).
\]
The roots $x_1$ and $x_2$ can also be written as
$\cosh(\xi\pm 2\pi\imunit/3)$.
\end{proof}

Next, we pass from~\eqref{eq:cubic_equation_0}
to the equation $a(z)=\la$,
where $a(t)=t^2+t^{-1}$ is the generating symbol of our matrix family.
Recall that $L$ is defined by~\eqref{eq:L_def}.

\begin{prop}
\label{prop:cubic_equation_1}
Let $0<\la<L$ and
\begin{equation}
\label{eq:nu_from_la}
\nu=\frac{1}{3}\arcosh\left((L/\la)^{3/2}\right).
\end{equation}
Then, the cubic equation
\begin{equation}
\label{eq:cubic_equation_1}
z^3-\la z+1=0
\end{equation}
has three different roots:
\begin{equation}
\label{eq:z123_via_nu}
z_{1,2}
=\sqrt{\frac{\la}{3}}
\left(\cosh(\nu)
\pm\imunit\sqrt{3}\sinh(\nu)\right),
\qquad
z_3
=-2\sqrt{\frac{\la}{3}}\cosh(\nu).
\end{equation}
\end{prop}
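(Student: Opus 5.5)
The plan is to reduce \eqref{eq:cubic_equation_1} to the normalized cubic \eqref{eq:cubic_equation_0} by a real scaling, and then invoke Lemma~\ref{lem:cubic_equation_0}.

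First, substitute $z=-2\sqrt{\la/3}\,x$, choosing the factor so that the ratio of the cubic and linear coefficients matches $4x^3-3x$. Since $z^3=-\frac{8}{3}\sqrt{\frac{\la}{3}}\,\la\, x^3$ and $-\la z=2\sqrt{\frac{\la}{3}}\,\la\, x$, the equation $z^3-\la z+1=0$ becomes
\[
-\frac{2}{3}\sqrt{\frac{\la}{3}}\,\la\,(4x^3-3x)+1=0,
\qquad\text{that is,}\qquad
4x^3-3x-A=0,\quad A\eqdef\frac{3}{2\la}\sqrt{\frac{3}{\la}}=\frac{3^{3/2}}{2\la^{3/2}}.
\]
Next, check that $A=(L/\la)^{3/2}$. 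From \eqref{eq:L_def} we get $L^{3/2}=3^{3/2}/(4^{1/3})^{3/2}=3^{3/2}/2$, so indeed $A=(L/\la)^{3/2}$. Since $0<\la<L$, we have $A>1$, so Lemma~\ref{lem:cubic_equation_0} applies with $\xi=\frac13\arcosh(A)=\nu$, exactly as in \eqref{eq:nu_from_la}.

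Then map the three roots back via $z=-2\sqrt{\la/3}\,x$. The root $x_3=\cosh\nu$ gives $z_3=-2\sqrt{\la/3}\cosh\nu$. The roots $x_{1,2}=-\frac12\cosh\nu\pm\imunit\frac{\sqrt3}{2}\sinh\nu$ give $\sqrt{\la/3}\,(\cosh\nu\mp\imunit\sqrt3\sinh\nu)$; relabeling the two conjugate roots yields the stated $z_{1,2}$. The map $x\mapsto z$ is a bijection, and $\nu>0$ because $A>1$, so $\sinh\nu\ne0$. Hence $z_1,z_2$ are non-real conjugates and $z_3$ is real. Therefore all three roots are distinct, which also follows from the distinctness already asserted in the lemma.

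The only delicate point is the arithmetic identity $L^{3/2}=3^{3/2}/2$ and tracking the sign of the scaling factor. Choosing the negative factor $-2\sqrt{\la/3}$ is what makes the constant term come out as $-A$ with $A>0$, so that the lemma applies directly.
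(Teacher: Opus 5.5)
Your proposal is correct and follows the paper's own proof: the same substitution $z=-2\sqrt{\la/3}\,x$ (equivalently, multiplying by $-\frac{3\sqrt{3}}{2\la\sqrt{\la}}$), the same identification $A=(L/\la)^{3/2}>1$ feeding into Lemma~\ref{lem:cubic_equation_0}, and the same relabeling in which $z_1$ comes from $x_2$ and $z_2$ from $x_1$.
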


\begin{proof}
We multiply both sides of the equation by $-\frac{3\sqrt{3}}{2\la\sqrt{\la}}$,
use the change of variable
$z = -2\sqrt{\frac{\la}{3}}\,x$,
and apply Lemma~\ref{lem:cubic_equation_0} with
\[
A
=\frac{3\sqrt{3}}{2\la\sqrt{\la}}
=\left(\frac{L}{\la}\right)^{3/2}.
\]
Notice that $z_1$ is obtained from $x_2$
and $z_2$ is obtained from $x_1$.
With this choice,
the imaginary part of $z_1$ is positive.
\end{proof}

After~\eqref{eq:nu_from_la},
we make another change of variables,
defining $\be$ as the principal argument of $z_1$:
\begin{equation}
\label{eq:be_via_nu}
\be
=\arg(z_1)
=\arctan\frac{\Im(z_1)}{\Re(z_1)}
=\arctan\left(\sqrt{3}\tanh(\nu)\right).
\end{equation}
Equivalently,
\begin{equation}
\label{eq:be_via_la}
\be
=\arctan
\left(\sqrt{3}\tanh\left(\frac{1}{3}\arcosh\bigl((L/\la)^{3/2}\bigr)\right)\right).
\end{equation}

\begin{prop}
\label{prop:roots_via_be}
Let $0<\la<L$ and $\be$ be defined by~\eqref{eq:be_via_la}.
Then, $\be\in\left(0,\frac{\pi}{3}\right)$ and
\begin{equation}\label{eq:la_via_be}
\la
=
\frac{L\,\left(1-\frac{1}{3}\tan^2\be\right)}{(1+\tan^2\be)^{2/3}}.
\end{equation}
Furthermore, the roots of the polynomial
$z^3-\la z+1$
can be written as
\begin{equation}
\label{eq:roots_via_be}
z_1 = \rho \enumber^{\imunit\be},\quad
z_2 = \rho \enumber^{-\imunit\be},\quad
z_3 = -\frac{1}{\rho^2} = - 2\rho \cos\be,
\end{equation}
where
\begin{equation}
\label{eq:rho_via_be}
\rho = \left(\frac{1}{2\cos\be} \right)^{1/3}.
\end{equation}
These numbers satisfy $|z_1|=|z_2|<1<|z_3|$.
\end{prop}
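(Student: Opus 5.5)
We work directly from Proposition~\ref{prop:cubic_equation_1}, where $\nu=\frac{1}{3}\arcosh((L/\la)^{3/2})>0$ and the roots $z_1,z_2,z_3$ are given by~\eqref{eq:z123_via_nu}.

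\emph{Range of $\be$.} Since $0<\la<L$, we have $(L/\la)^{3/2}>1$, so $\nu>0$ and $0<\tanh\nu<1$. Hence $0<\sqrt3\tanh\nu<\sqrt3$ and $\be=\arctan(\sqrt3\tanh\nu)\in(0,\pi/3)$. Also $\Re z_1=\sqrt{\la/3}\cosh\nu>0$, so $\be$ is indeed the principal argument of $z_1$.

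\emph{Polar form of the roots.} Write $z_1=|z_1|\enumber^{\imunit\be}$ and $z_2=\overline{z_1}=|z_1|\enumber^{-\imunit\be}$. Set $\rho\eqdef|z_1|$. Vieta for $z^3-\la z+1$ gives $z_1z_2z_3=-1$, so $z_3=-1/\rho^2$. It also gives $z_1+z_2+z_3=0$, so $z_3=-2\Re z_1=-2\rho\cos\be$. Equating the two expressions for $z_3$ gives $\rho^3=1/(2\cos\be)$, which is~\eqref{eq:rho_via_be}, and both forms of $z_3$ in~\eqref{eq:roots_via_be} hold. Since $\be\in(0,\pi/3)$, we have $2\cos\be>1$, so $\rho<1$ and $|z_3|=\rho^{-2}>1$. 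This gives $|z_1|=|z_2|<1<|z_3|$.

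\emph{Formula~\eqref{eq:la_via_be}.} Use the remaining Vieta relation $z_1z_2+z_1z_3+z_2z_3=-\la$. The left side equals
\[
\rho^2+z_3(z_1+z_2)=\rho^2-4\rho^2\cos^2\be .
\]
Hence $\la=\rho^2(4\cos^2\be-1)$. Now substitute $\rho^2=(2\cos\be)^{-2/3}$ and $\cos^2\be=1/(1+\tan^2\be)$:
\[
\la=(4\cos^2\be)^{-1/3}(4\cos^2\be-1)=4^{2/3}(\cos^2\be)^{2/3}\Bigl(1-\tfrac{1}{4\cos^2\be}\Bigr).
\]
Expanding, $1-\frac{1}{4\cos^2\be}=\frac{3-\tan^2\be}{4}$, and $4^{2/3}\cdot\frac34=3/\sqrt[3]{4}=L$. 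This gives $\la=L(1-\frac13\tan^2\be)/(1+\tan^2\be)^{2/3}$.

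The only delicate point is the bookkeeping with fractional powers and checking that $L$ appears with exactly the right constant; every other step follows directly from Vieta's formulas.
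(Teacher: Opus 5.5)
Your proof is correct and follows essentially the same route as the paper. Both set $\rho=|z_1|$ and use the three Vieta relations to get $z_3=-1/\rho^2=-2\rho\cos\be$, $\rho^3=1/(2\cos\be)$, and $\la=\rho^2(4\cos^2\be-1)$, then simplify. Your direct derivation of $\be\in(0,\pi/3)$ from $0<\tanh\nu<1$, together with the check that $\Re z_1>0$, is if anything more explicit than the paper's appeal to the monotonicity of $\be\mapsto L\left(1-\frac{1}{3}\tan^2\be\right)/(1+\tan^2\be)^{2/3}$.
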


\begin{proof}
Let $\rho=|z_1|$. Then,
\begin{equation}
\label{eq:z12_via_rho_be}
z_1=\rho \enumber^{\imunit\be},\qquad
z_2=\rho \enumber^{-\imunit\be}.
\end{equation}
Vieta's formulas for the polynomial $z^3-\la z+1$
tell us that
\begin{gather}
z_1 z_2 z_3 = -1,
\label{eq:product_of_three}
\\
\label{eq:product_of_two}
z_1 z_2 + z_1 z_3 + z_2 z_3 = -\la,
\\
\label{eq:sum_of_roots}
z_1 + z_2 + z_3 = 0.
\end{gather}
From~\eqref{eq:z12_via_rho_be} and \eqref{eq:product_of_three}
we obtain
$z_3=-\frac{1}{\rho^2}$.
Now, by~\eqref{eq:sum_of_roots},
\begin{equation}
\label{eq:rho3_via_beta}
\rho^3 = \frac{1}{2\cos\be},
\end{equation}
which yields \eqref{eq:rho_via_be} and \eqref{eq:roots_via_be}.
Substituting \eqref{eq:roots_via_be} and~\eqref{eq:rho_via_be}
into \eqref{eq:product_of_two}
we obtain \eqref{eq:la_via_be}:
\begin{align*}
\la
&= \rho^2 (4\cos^2 \be - 1)
= \frac{1}{\sqrt[3]{4}} \cos^{4/3}\be \left(4-\frac{1}{\cos^2\be}\right)
\\
&= \frac{1}{\sqrt[3]{4}} \frac{3-\tan^2\be}{(1+\tan^2\be)^{2/3}}
= \frac{L\left(1-\frac{1}{3}\tan^2\be\right)}%
{(1+\tan^2\be)^{2/3}}.
\end{align*}
The function
\[
\be\mapsto
\frac{L\left(1-\frac{1}{3}\tan^2\be\right)}%
{(1+\tan^2\be)^{2/3}}
\]
strictly decreases on $[0,\pi/3]$
and takes values from $L$ to $0$.
Moreover, for $\be$ in $(0,\pi/3)$,
we obtain $\rho<1$ by~\eqref{eq:rho_via_be}
and $|z_1|<|z_3|$ by~\eqref{eq:roots_via_be}.
\end{proof}

Finally, we apply the change of variables
\begin{equation}
\label{eq:al_from_be}
\al=\frac{1}{\pi}\left(\frac{\pi}{3}-\be\right)
=\frac{1}{3}-\frac{\be}{\pi},
\qquad\text{i.e.},\qquad
\be=\frac{\pi}{3}-\pi\al.
\end{equation}
We denote by $F$ the correspondence
$\la\mapsto\al$ and by $G$ the inverse function.
The functions
$F$ and $G$ are given explicitly
by~\eqref{eq:F_def} and~\eqref{eq:G_explicit}.
This change of variables is justified by Theorem~\ref{thm:asymptotic_distribution}
which will be proved in Section~\ref{sec:asymptotic_distribution}.

\section{Characteristic polynomial after a change of variables}
\label{sec:char_pol}

In this section,
we represent the characteristic polynomial of $T_n(a)$ in a convenient form.
Recall that $D_n(\la)=\det(\la I_n-T_n(a))$;
see~\eqref{eq:charpol_notation}.
For the next proposition,
we need some auxiliar functions
defined on $(0,\pi/3)$:
\begin{align}
\label{eq:def_M_old}
\oldM(\be)
&\eqdef
\frac{2}{\sin\be\sqrt{9+\tan^2\be}},
\\[1ex]
\label{eq:def_E_old}
\oldE_n(\be)
&\eqdef \sin\bigl((n+1)\be + \oldtht(\be) \bigr)
+(-1)^n
\oldp(\be)
\oldq(\be)^{n+1},
\\[1ex]
\label{eq:def_theta_p_q_old}
\oldtht(\be)
&\eqdef \arctan\left (\frac{\tan\be}{3}\right),
\qquad
\oldp(\be)
\eqdef \frac{\tan\be}{\sqrt{9+\tan^2\be}},
\qquad
\oldq(\be) \eqdef \frac{1}{2\cos\be}.
\end{align}

\begin{prop}[the characteristic polynomial in terms of $\be$]
\label{prop:charpol_via_beta}
Let $0<\la<L$ and $\be$ be defined by
\eqref{eq:be_via_la}.
Then,
\begin{equation}
\label{eq:charpol_via_be}
D_n(\la)
=
\oldM(\be)\,
(2\cos\be)^{n/3}\,
\oldE_n(\be).
\end{equation}
\end{prop}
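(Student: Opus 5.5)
The plan is a direct substitution into Widom's formula. I would apply \eqref{eq:Widom_det_lambda_minus_b} with $b=a$, so that $b_2=1$ and the prefactor is $(-1)^n$. For the roots $z_1,z_2,z_3$ of $\la-a$ (equivalently of $z^3-\la z+1$) I would use the representation \eqref{eq:roots_via_be} from Proposition~\ref{prop:roots_via_be}:
\[
z_1=\rho\enumber^{\imunit\be},\qquad z_2=\rho\enumber^{-\imunit\be},\qquad z_3=-2\rho\cos\be,\qquad \rho^3=\frac{1}{2\cos\be}.
\]
These roots are pairwise distinct and nonzero for $\be\in(0,\pi/3)$, so \eqref{eq:Widom_det_lambda_minus_b} applies. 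Throughout, $\be$ and $\rho$ are real, $z_2=\overline{z_1}$, and $z_3$ is real.

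\textbf{First term.} Here $z_1z_2=\rho^2$. The denominator equals $|z_1-z_3|^2=\rho^2\,|\enumber^{\imunit\be}+2\cos\be|^2$. Since $\enumber^{\imunit\be}+2\cos\be=3\cos\be+\imunit\sin\be$, this gives
\[
(z_1-z_3)(z_2-z_3)=\rho^2\cos^2\be\,(9+\tan^2\be).
\]
Hence the first term equals $\rho^{2n}/\bigl(\cos^2\be\,(9+\tan^2\be)\bigr)$.

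\textbf{Second and third terms.} The third term is the complex conjugate of the second, so their sum is $2\Re$ of the second term. Its numerator is
\[
(z_1z_3)^{n+1}=(-1)^{n+1}(2\cos\be)^{n+1}\rho^{2n+2}\enumber^{\imunit(n+1)\be}.
\]
For the denominator, $z_3-z_2=-\rho(3\cos\be-\imunit\sin\be)$. Writing $3\cos\be-\imunit\sin\be=\cos\be\sqrt{9+\tan^2\be}\,\enumber^{-\imunit\oldtht(\be)}$ with $\oldtht(\be)=\arctan(\tan\be/3)$, we get
\[
(z_1-z_2)(z_3-z_2)=-2\imunit\rho^2\sin\be\cos\be\sqrt{9+\tan^2\be}\;\enumber^{-\imunit\oldtht(\be)}.
\]
Dividing, and using $\Re(\enumber^{\imunit\phi}/\imunit)=\sin\phi$, the sum of the last two terms becomes
\[
(-1)^n(2\cos\be)^n\rho^{2n}\,\frac{2\sin\bigl((n+1)\be+\oldtht(\be)\bigr)}{\sin\be\sqrt{9+\tan^2\be}}.
\]

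\textbf{Assembling.} Next I would use $\rho^{2n}=(2\cos\be)^{-2n/3}$, so that $(2\cos\be)^n\rho^{2n}=(2\cos\be)^{n/3}$. The last two terms, multiplied by the prefactor $(-1)^n$, then give exactly $\oldM(\be)(2\cos\be)^{n/3}\sin\bigl((n+1)\be+\oldtht(\be)\bigr)$. The first term, multiplied by $(-1)^n$, should be written as $\oldM(\be)(2\cos\be)^{n/3}$ times a ratio. That ratio simplifies as
\[
(2\cos\be)^{-n}\frac{\sin\be\sqrt{9+\tan^2\be}}{2\cos^2\be\,(9+\tan^2\be)}
=\frac{\tan\be}{\sqrt{9+\tan^2\be}}\cdot\frac{1}{(2\cos\be)^{n+1}}
=\oldp(\be)\,\oldq(\be)^{n+1}.
\]
Together with the sign $(-1)^n$, this is precisely the second summand of $\oldE_n(\be)$, which proves \eqref{eq:charpol_via_be}.

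The argument is pure bookkeeping. The step most likely to go wrong is tracking signs and conjugations. In particular, one must check that the third term really is the conjugate of the second, i.e.\ that $(z_2-z_1)(z_3-z_1)$ is the conjugate of $(z_1-z_2)(z_3-z_2)$. One must also choose the correct branch for $\oldtht$ in the polar form of $3\cos\be-\imunit\sin\be$; this is unambiguous because $\cos\be>0$ on $(0,\pi/3)$. Finally, the $(-1)^{n+1}$ from $z_3^{n+1}$ has to cancel correctly against $-2\imunit$ and the prefactor $(-1)^n$. I would confirm the final identity against $D_3(\la)=\la^3-1$ as a sanity check.
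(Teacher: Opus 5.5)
Your proposal is correct and is essentially the paper's own proof. Both substitute $z_{1,2}=\rho\enumber^{\pm\imunit\be}$, $z_3=-2\rho\cos\be$ into Widom's formula, use the polar form of $z_1-z_3$ (equivalently $z_3-z_2$) with argument $\oldtht(\be)$, note that the last two terms are complex conjugates, and use $\rho^3=1/(2\cos\be)$ to factor out $\oldM(\be)(2\cos\be)^{n/3}$; your sign and power bookkeeping, including the identification of the first Widom term with $(-1)^n\oldp(\be)\oldq(\be)^{n+1}$, checks out.
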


\begin{proof}
We are going to apply~\eqref{eq:Widom_det}
to $b=\la-a$
and rewrite it in terms of $\be$
using~\eqref{eq:roots_via_be}.
The difference $z_1-z_2$ is just $2\imunit \rho\sin\be$.
Consider the difference $z_1-z_3$:
\[
z_1-z_3
= \rho (2\cos\be + \enumber^{\imunit\be})
= \rho (3\cos\be+\imunit\sin\be).
\]
The argument of $z_1-z_3$
is $\oldtht(\be)$, defined by~\eqref{eq:def_theta_p_q_old},
while its absolute value is
\[
|z_1-z_3|
=\rho\,\cos\be\;\sqrt{9+\tan^2\be}.
\]
Here, we have used the fact
$\cos(\be)>0$ for $\be$ in $(0,\pi/3)$.
Applying the formula
$z_2-z_3=\overline{z_1-z_3}$,  
we obtain
\begin{equation}
\label{eq:diferences}
z_1-z_3
=\rho\,\cos\be\,\sqrt{9+\tan^2\be}\,
\enumber^{\imunit\oldtht(\be)},
\qquad
z_2-z_3
=\rho\,\cos\be\,\sqrt{9+\tan^2\be}\,
\enumber^{-\imunit\oldtht(\be)}.
\end{equation}
We denote by 
$S_n^{\{1,2\}}$,
$S_n^{\{1,3\}}$,
and
$S_n^{\{2,3\}}$
the terms in the right-hand
side of~\eqref{eq:Widom_det}.
Using the expressions above and~\eqref{eq:rho3_via_beta}, we get
\begin{align*}
S_n^{\{1,2\}}
&=
\frac{(z_1 z_2)^{n+1}}{(z_1-z_3)(z_2-z_3)}
= \frac{\rho^{2n}}%
{\cos^2\be\,(9+\tan^2\be)},
\\[1ex]
S_n^{\{1,3\}}
&=
\frac{(z_1 z_3)^{n+1}}{(z_1-z_2)(z_3-z_2)}
=
\frac{(-1)^{n+1} \rho^{-n}}%
{\sin\be\,\sqrt{9+\tan^2\be}}\,
\imunit\enumber^{\imunit\,((n+1)\be+\oldtht(\be))},
\\[1ex]
S_n^{\{2,3\}}
&=
\frac{(z_2 z_3)^{n+1}}{(z_2-z_1)(z_3-z_1)}
=\overline{S_n^{\{1,3\}}}
=
-\frac{(-1)^{n+1} \rho^{-n}}%
{\sin\be\,\sqrt{9+\tan^2\be}}\,
\imunit\enumber^{-\imunit\,((n+1)\be+\oldtht(\be))}.
\end{align*}
Thus, the characteristic polynomial of $T_n(a)$ is
\begin{align*}
D_n(\la)
&=
(-1)^n 
\Bigl(S_n^{\{1,2\}}+S_n^{\{1,3\}}+S_n^{\{2,3\}}\Bigr)
\\
&=\frac{2\rho^{-n}}%
{\sin\be\,\sqrt{9+\tan^2\be}}\,
\sin\Bigl((n+1)\be+\oldtht(\be)\Bigr)
+\frac{(-1)^n\rho^{2n}}{\cos^2\be (9+\tan^2\be)}
\\
&=\oldM(\be)\,(2\cos\be)^{n/3}
\left(\sin\Bigl((n+1)\be+\oldtht(\be)\Bigr)
+
\frac{(-1)^n\tan\be}{\sqrt{9+\tan^2\be}}
\left(\frac{1}{2\cos\be}\right)^{n+1}\right),
\end{align*}
which equals~\eqref{eq:charpol_via_be}.
\end{proof}

Now, we pass to the variable
$\al$ related to $\be$ by~\eqref{eq:al_from_be}.
We define real-valued functions $\tht$, $p$, and $q$
on $[0,1/3]$ by~\eqref{eq:def_theta}--\eqref{eq:def_q}.
After that, we define $E_n\colon[0,1/3]\to\bR$ by
\begin{equation}
\label{eq:E_def}
E_n(\al)
\eqdef
\cos\left(\left(n+\frac{3}{2}\right)\pi\al
+ \pi\tht(\al)
- \frac{r\pi}{3}\right)
+(-1)^r
p(\al)
q(\al)^{n+1}.
\end{equation}
We also define $M\colon[0,1/3]\to\bR$,
\begin{equation}
M(\al)
\eqdef
\frac{2}{\sin\left(\frac{\pi}{3}-\pi\al\right)
\sqrt{9+\tan^2\left(\frac{\pi}{3}-\pi\al\right)}}.
\end{equation}

\begin{prop}[the characteristic polynomial in terms of $\al=F(\la)$]
\label{prop:charpol_via_alpha}
Let $0<\la<L$ and $\al=F(\la)$.
Then,
\begin{equation}
\label{eq:charpol_via_al}
D_n(\la)
=(-1)^m M(\al)\,
\left(\frac{1}{q(\al)}\right)^{n/3}\,
E_n(\al).
\end{equation}
\end{prop}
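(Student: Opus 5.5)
We derive this directly from Proposition~\ref{prop:charpol_via_beta} by substituting $\be=\frac{\pi}{3}-\pi\al$. Under this substitution the functions match as follows:
\[
\oldM(\be)=M(\al),\qquad
\oldp(\be)=p(\al),\qquad
\oldq(\be)=q(\al),\qquad
2\cos\be=\frac{1}{q(\al)}.
\]
Hence $(2\cos\be)^{n/3}=q(\al)^{-n/3}$. It remains to show
\[
\oldE_n(\be)=(-1)^m E_n(\al).
\]

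For the second term of $\oldE_n$, note that $(-1)^n=(-1)^{3m+r}=(-1)^{m+r}=(-1)^m(-1)^r$. So
\[
(-1)^n\oldp(\be)\oldq(\be)^{n+1}=(-1)^m\,(-1)^r p(\al)q(\al)^{n+1},
\]
which is $(-1)^m$ times the second term of $E_n(\al)$.

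For the trigonometric term, definition~\eqref{eq:def_theta} gives $\pi\tht(\al)=\frac{\pi}{6}-\frac{\pi\al}{2}-\oldtht(\be)$, so $\oldtht(\be)=\frac{\pi}{6}-\frac{\pi\al}{2}-\pi\tht(\al)$. Substituting,
\[
(n+1)\be+\oldtht(\be)
=\frac{(n+1)\pi}{3}+\frac{\pi}{6}-\Bigl(n+\frac{3}{2}\Bigr)\pi\al-\pi\tht(\al).
\]
Write $X\eqdef\left(n+\frac32\right)\pi\al+\pi\tht(\al)$. Using $n=3m+r$, we get $\frac{(n+1)\pi}{3}+\frac{\pi}{6}=m\pi+\frac{r\pi}{3}+\frac{\pi}{2}$. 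Therefore
\[
\sin\Bigl(m\pi+\frac{\pi}{2}+\frac{r\pi}{3}-X\Bigr)
=(-1)^m\cos\Bigl(\frac{r\pi}{3}-X\Bigr)
=(-1)^m\cos\Bigl(X-\frac{r\pi}{3}\Bigr),
\]
which is exactly $(-1)^m$ times the first term of $E_n(\al)$.

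Combining the two terms gives $\oldE_n(\be)=(-1)^mE_n(\al)$, and~\eqref{eq:charpol_via_al} follows. The only delicate point is the sign and phase bookkeeping in the trigonometric term, namely the sign convention for $\oldtht$ inside $\tht$ and the decomposition $n=3m+r$; there is no analytic obstacle.
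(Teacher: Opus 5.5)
Your proposal is correct and follows the paper's proof essentially step for step. You substitute $\be=\frac{\pi}{3}-\pi\al$ into Proposition~\ref{prop:charpol_via_beta}; this is exactly the relation $\al=F(\la)$, by \eqref{eq:F_def} and \eqref{eq:be_via_la}. You then match $\oldM,\oldp,\oldq$ with $M,p,q$, rewrite the phase via $n=3m+r$ to get $\sin\bigl(m\pi+\frac{\pi}{2}+\frac{r\pi}{3}-X\bigr)=(-1)^m\cos\bigl(X-\frac{r\pi}{3}\bigr)$, and handle the exponential term with $(-1)^n=(-1)^m(-1)^r$; you treat both terms of $\oldE_n$ slightly more explicitly than the paper, which displays only the sine term.
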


\begin{proof}
This proposition follows from 
Proposition~\ref{prop:charpol_via_beta},
using the change of variable~\eqref{eq:al_from_be}.
Notice that
\begin{align*}
p(\al)
&=\oldp\left(\frac{\pi}{3}-\pi\al\right),
&
q(\al)
&=\oldq\left(\frac{\pi}{3}-\pi\al\right),
\\
M(\al)
&=
\oldM\left(\frac{\pi}{3}-\pi\al\right),
&
\tht(\al)
&=\frac{1}{6}-\frac{\al}{2}
-\frac{1}{\pi}
\oldtht\left(\frac{\pi}{3}-\pi\al\right).
\end{align*}
The expression $\oldE_n(\be)$ transforms as follows:
\begin{align*}
\oldE_n(\be)
&=\oldE_n\left(\frac{\pi}{3}-\pi\al\right)
=\sin\left((n+1)\left(\frac{\pi}{3}-\pi\al\right)
+\frac{\pi}{6}-\frac{\pi\al}{2}
-\pi\tht(\al)\right)
\\
&=\sin\left(m\pi+\frac{r\pi}{3}
-\left(n+\frac{3}{2}\right)\pi\al
+\frac{\pi}{2}
-\pi\tht(\al)\right)
=(-1)^m E_n(\al).
\end{align*}
Finally,
$(-1)^n=(-1)^{3m+r}=(-1)^m (-1)^r$.
\end{proof}

Proposition~\ref{prop:charpol_via_alpha} implies
that the equation $D_n(\la)=0$,
for $\la$ in $(0,L)$,
is equivalent to the equation
$E_n(\al)=0$, where $\al\in(0,1/3)$
and $\al=F(\la)$.

Let us mention some important properties of
$\tht$, $p$, and $q$.

\begin{prop}[properties of $\tht$] \mbox{}
\label{prop:theta}
\begin{enumerate}
\item $\tht$ is infinitely smooth on $[0,1/3]$.
\item $\tht(0)=0$, $\tht(1/3)=0$.
\item $\displaystyle\min_{0\le \al\le 1/3}\tht(\al)=0$.
\item $\displaystyle\max_{0\le \al \le 1/3}\tht(\al)
=\tht\left(\frac{1}{3}-\frac{1}{\pi}\arctan\frac{\sqrt{15}}{5}\right)
=\frac{1}{2\pi}\arctan\frac{\sqrt{15}}{5}
-\frac{1}{\pi}\arctan\frac{\sqrt{15}}{15}
\approx 0.0245$.
\item $\displaystyle\max_{0\le\al\le 1/3}|\tht'(\al)|=\tht'(0)=1/2$.
\end{enumerate}
\end{prop}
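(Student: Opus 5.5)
The plan is to rewrite $\tht$ in terms of the variable $\be=\frac{\pi}{3}-\pi\al\in[0,\pi/3]$ from~\eqref{eq:al_from_be} and do elementary one-variable calculus. By~\eqref{eq:def_theta}, and since $\frac16-\frac{\al}{2}=\frac{\be}{2\pi}$,
\[
\tht(\al)=\frac{\be}{2\pi}-\frac{1}{\pi}\arctan\frac{\tan\be}{3}.
\]

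\emph{Smoothness and endpoint values.} Since $\tan$ is analytic on a neighborhood of $[0,\pi/3]$ and $\arctan$ is analytic on $\bR$, item~1 follows. For item~2, take $\al=1/3$, which gives $\be=0$ and $\tht=0$. Take $\al=0$, which gives $\be=\pi/3$ and $\tan\be/3=1/\sqrt3$. Then $\arctan(1/\sqrt3)=\pi/6$, so $\tht=\frac16-\frac16=0$.

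\emph{Derivative in $\be$.} Differentiating with respect to $\be$ gives
\[
\frac{d\tht}{d\be}=\frac{1}{2\pi}-\frac{1}{\pi}\,\frac{\sec^2\be/3}{1+\tan^2\be/9}
=\frac{1}{2\pi}-\frac{3}{\pi\,(9\cos^2\be+\sin^2\be)}
=\frac{1}{2\pi}-\frac{3}{\pi\,(1+8\cos^2\be)}.
\]
As $\be$ runs over $[0,\pi/3]$, the quantity $1+8\cos^2\be$ decreases strictly from $9$ to $3$. Hence $\frac{d\tht}{d\be}$ decreases strictly from $\frac{1}{6\pi}>0$ to $-\frac{1}{2\pi}<0$. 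It vanishes at the single point where $\cos^2\be=5/8$, i.e.\ $\tan^2\be=3/5$, i.e.\ $\be^*=\arctan\frac{\sqrt{15}}{5}$.

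\emph{Minimum and maximum.} It follows that $\tht$, as a function of $\be$, increases on $[0,\be^*]$ and decreases on $[\be^*,\pi/3]$. Together with item~2, this gives $\min\tht=0$ (item~3). The maximum is attained at $\be^*$, i.e.\ at $\al=\frac13-\frac{\be^*}{\pi}$. Since $\tan\be^*/3=\sqrt{15}/15$, the maximum value is
\[
\frac{1}{2\pi}\arctan\frac{\sqrt{15}}{5}-\frac1\pi\arctan\frac{\sqrt{15}}{15},
\]
which is item~4; the numerical value is a routine evaluation.

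\emph{Bound on $\tht'$.} For item~5, the chain rule gives
\[
\tht'(\al)=-\pi\,\frac{d\tht}{d\be}=-\frac12+\frac{3}{1+8\cos^2\be}.
\]
This is strictly monotone in $\be$ and takes values between $-\frac12+\frac13=-\frac16$ (at $\be=0$) and $-\frac12+1=\frac12$ (at $\be=\pi/3$, i.e.\ $\al=0$). So $|\tht'|\le\frac12$, with equality only at $\al=0$, and $\tht'(0)=\frac12$.

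No step is a real obstacle. The only point needing care is the derivative simplification $9\cos^2\be+\sin^2\be=1+8\cos^2\be$, on which the monotonicity arguments rest.
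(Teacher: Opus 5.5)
Your proof is correct and follows essentially the same route as the paper: the paper also computes $\tht'(\al)=\frac{5}{2}-\frac{24\cos^2(\frac{\pi}{3}-\pi\al)}{8\cos^2(\frac{\pi}{3}-\pi\al)+1}$, which equals your $-\frac{1}{2}+\frac{3}{1+8\cos^2\be}$, and reads off analyticity, the single critical point and the increase/decrease pattern from it. Your version just fills in the details the paper leaves implicit, namely the endpoint values, the critical point $\tan^2\be^*=3/5$, and the range $[-\frac{1}{6},\frac{1}{2}]$ of $\tht'$.
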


\begin{proof}
The derivative of $\tht$ is
\begin{equation}
\label{eq:tht_D1}
\tht'(\al)
=\frac{5}{2}
-
\frac{24}{9+\tan^2\left(\frac{\pi}{3}-\pi\al\right)}
=\frac{5}{2}
-\frac{24\cos^2\left(\frac{\pi}{3}-\pi\al\right)}{8\cos^2\left(\frac{\pi}{3}-\pi\al\right)+1}.
\end{equation}
Therefore, $\tht$ can be extended to an analytic function
in an open subset of $\bC$ containing  $[0,1/3]$.
Furthermore, $\tht$ strictly increases from $0$ to
$\frac{1}{3}-\frac{1}{\pi}\arctan\frac{\sqrt{15}}{5}$
and strictly decreases from this point to $1/3$.
\end{proof}

\begin{prop}[properties of $p$ and $q$]
\label{prop:p_q_properties}
Functions $p$ and $q$ are infinitely smooth on $[0,1/3]$
and strictly decreasing on $[0,1/3]$.
For every $\al$ in $[0,1/3]$, 
\[
0\le p(\al)\le\frac{1}{2},\qquad
\frac{1}{2}\le q(\al)\le 1,
\]
\[
|p'(\al)|\le\frac{\pi\sqrt{3}}{2},\qquad
|q'(\al)|\le \pi \sqrt{3}.
\]
\end{prop}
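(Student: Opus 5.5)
Substitute $\be=\frac{\pi}{3}-\pi\al$, which maps $[0,1/3]$ onto $[0,\pi/3]$ decreasingly. By the identities in the proof of Proposition~\ref{prop:charpol_via_alpha}, $p(\al)=\oldp(\be)$ and $q(\al)=\oldq(\be)$. Smoothness is then immediate: on $[0,\pi/3]$ we have $\cos\be\ge\frac12>0$, so $\tan\be$ and $1/(2\cos\be)$ are analytic, and $9+\tan^2\be\ge 9$ keeps the square root away from zero. For monotonicity it is enough to show that $\oldp$ and $\oldq$ are strictly increasing in $\be$ on $[0,\pi/3]$, since $\al\mapsto\be$ is decreasing. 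For $\oldq$ this is clear because $\cos\be$ decreases. For $\oldp$, write $\oldp(\be)=h(\tan\be)$ with $h(s)=s/\sqrt{9+s^2}$ and $h'(s)=9/(9+s^2)^{3/2}>0$; since $\tan$ is increasing, so is $\oldp$.

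The value bounds come from the endpoints. At $\be=0$ we get $\oldp=0$ and $\oldq=\frac12$. At $\be=\pi/3$ we have $\tan\be=\sqrt3$, so $\oldp=\sqrt3/\sqrt{12}=\frac12$, and $\oldq=1/(2\cdot\frac12)=1$. Monotonicity then gives $0\le p\le\frac12$ and $\frac12\le q\le1$.

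For the derivatives, use the chain rule with $d\be/d\al=-\pi$, so $p'(\al)=-\pi\,\oldp'(\be)$ and $q'(\al)=-\pi\,\oldq'(\be)$.
\begin{itemize}
\item For $q$: $\oldq'(\be)=\frac{\sin\be}{2\cos^2\be}$. Both $\sin\be$ and $1/\cos^2\be$ increase, so the maximum is at $\be=\pi/3$, where it equals $\frac{\sqrt3/2}{2\cdot\frac14}=\sqrt3$. Hence $|q'|\le\pi\sqrt3$.
\item For $p$: $\oldp'(\be)=h'(\tan\be)\sec^2\be=\frac{9(1+s^2)}{(9+s^2)^{3/2}}$ with $s=\tan\be\in[0,\sqrt3]$. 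To bound it, set $u=s^2$ and $k(u)=\frac{9(1+u)}{(9+u)^{3/2}}$. The logarithmic derivative is $\frac{1}{1+u}-\frac{3}{2(9+u)}$, which has the sign of $2(9+u)-3(1+u)=15-u>0$ on $[0,3]$. So $k$ increases, and its maximum is at $u=3$: $k(3)=\frac{36}{12^{3/2}}=\frac{36}{24\sqrt3}=\frac{\sqrt3}{2}$. Hence $|p'|\le\frac{\pi\sqrt3}{2}$.
\end{itemize}

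The only step that is not completely routine is the maximization of $\oldp'$. It needs the monotonicity check of $k$ on $[0,3]$, which the logarithmic-derivative computation above settles directly. Everything else follows from the explicit formulas~\eqref{eq:def_p}, \eqref{eq:def_q}, and~\eqref{eq:def_theta_p_q_old}.
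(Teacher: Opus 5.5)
Your proposal is correct and follows the same route as the paper, which states only that these properties are verified by direct computations. You carry out those computations explicitly, and they check out, including the logarithmic-derivative argument that places the maximum of $|p'|$ at $\alpha=0$.
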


\begin{proof}
The statements are verified by direct computations.
\end{proof}

\section{Bisection method and spectrum localization}
\label{sec:localization_of_spectrum}

In this section,
we prove Theorem~\ref{thm:localization_of_spectrum}.
In the upcoming Proposition~\ref{prop:rough_localization_of_solutions},
we show that the equation $E_n(\al)=0$ can be solved 
by the bisection method on appropriate intervals.
For every $j$ in $\{0,1,\ldots,m\}$, let
\begin{equation}
\label{eq:xi_def}
\xi_{n,j} \eqdef
\frac{j+\frac{r}{3}}{n+\frac{3}{2}}.
\end{equation}

\begin{prop}
\label{prop:rough_localization_of_solutions}
For every $j$ in $\{1,\ldots,m\}$,
$E_n$ has at least one zero on
$(\xi_{n,j-1},\;\xi_{n,j})$.
\end{prop}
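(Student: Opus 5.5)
We will apply the intermediate value theorem to $E_n$ on $[\xi_{n,j-1},\xi_{n,j}]$. The plan is to show that $E_n$ takes values of opposite (strict) signs at the two endpoints. Since $E_n$ is continuous on $[0,1/3]$, a zero then exists in the open interval.

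First, we evaluate the cosine argument at $\al=\xi_{n,k}$:
\[
\left(n+\tfrac{3}{2}\right)\pi\xi_{n,k}-\tfrac{r\pi}{3}=k\pi .
\]
Hence
\[
E_n(\xi_{n,k})=\cos\bigl(k\pi+\pi\tht(\xi_{n,k})\bigr)+(-1)^r p(\xi_{n,k})q(\xi_{n,k})^{n+1}
=(-1)^k\Bigl(\cos\bigl(\pi\tht(\xi_{n,k})\bigr)+(-1)^{r+k}p(\xi_{n,k})q(\xi_{n,k})^{n+1}\Bigr).
\]
By Proposition~\ref{prop:theta}, $0\le\tht\le 0.0245$, so $\cos(\pi\tht)\ge\cos(0.0245\pi)>0.99$. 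By Proposition~\ref{prop:p_q_properties}, $0\le p\le 1/2$ and $q\le 1$, so $pq^{n+1}\le 1/2$. Therefore the bracket is at least $0.99-0.5>0$, and
\[
\sign E_n(\xi_{n,k})=(-1)^k .
\]
Consecutive endpoints $\xi_{n,j-1}$ and $\xi_{n,j}$ thus give opposite signs, and the intermediate value theorem yields a zero in $(\xi_{n,j-1},\xi_{n,j})$.

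It remains to check that all the $\xi_{n,k}$ lie in the domain $[0,1/3]$. We have $\xi_{n,0}=\frac{r/3}{n+3/2}\ge0$. For the right end,
\[
\xi_{n,m}=\frac{m+r/3}{3m+r+3/2}=\frac{n/3}{n+3/2}<\frac13 .
\]

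The main obstacle is the endpoint $k=0$ when $r=0$. There $\xi_{n,0}=0$, and $q(0)=1$, $p(0)=\frac{\sqrt3}{\sqrt{12}}=\frac12$, so the exponential term does not decay. The crude estimate still suffices: $\cos(\pi\tht(0))=1$ and the bracket is $1\pm\frac12>0$. So the sign argument holds uniformly, with no need for $n$ large.
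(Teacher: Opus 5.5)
Your proof is correct and takes essentially the same route as the paper: at $\xi_{n,k}$ the cosine argument equals $k\pi+\pi\tht(\xi_{n,k})$, and the smallness of $\tht$ together with $pq^{n+1}\le\frac12$ gives $\sign E_n(\xi_{n,k})=(-1)^k$, after which the intermediate value theorem finishes the argument. The only differences are minor: the paper bounds $|\pi\tht|<\frac{1}{10}$ and uses that $\cos$ is $1$-Lipschitz rather than your direct estimate $\cos(\pi\tht)>0.99$, and it does not explicitly check that the points $\xi_{n,k}$ lie in $[0,1/3]$, as you do.
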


\begin{proof}
We represent $E_n(\al)$ as
$E_n(\al)=\cos(A_n(\al))+B_n(\al)$,
where
\[
A_n(\al)
\eqdef 
\left(n+\frac{3}{2}\right)\pi\al
+ \pi\tht(\al)
-\frac{r\pi}{3},\qquad
B_n(\al)
\eqdef
(-1)^r p(\al) q(\al)^{n+1}.
\]
By Proposition~\ref{prop:p_q_properties},
the term $B_n(\al)$ is absolutely bounded by $1/2$:
\[
|B_n(\al)| \le p(\al) \le \frac{1}{2}.
\]
On the other hand,
for each $j$ in $\{0,\ldots,m\}$,
\[
A_n(\xi_{n,j})
=j\pi+\pi\tht(\xi_{n,j}).
\]
Therefore, $A_n(\xi_{n,j})$ is close to $j\pi$:
\[
\left|A_n(\xi_{n,j})-j\pi\right|
=\bigl|\pi\tht(\xi_{n,j})\bigr|
<\frac{1}{10}.
\]
Since $\cos(j\pi)=(-1)^j$
and $\cos$ is Lipschitz continuous with coefficient $1$,
this implies that
\[
\left|
\cos(A_n(\xi_{n,j})) - (-1)^j
\right|
< \frac{1}{10}.
\]
\[
\left|
(-1)^j \cos(A_n(\xi_{n,j})) - 1
\right|
< \frac{1}{10}.
\]
Thus,
$\cos(A_n(\xi_{n,j}))$ is close to $(-1)^j$:
\[
(-1)^j \cos(A_n(\xi_{n,j}))
> \frac{9}{10}.
\]
We conclude that
$(-1)^j E_n(\xi_{n,j})>\frac{9}{10}-\frac{1}{2}=\frac{2}{5}$ and
\[
\sign(E_n(\xi_{n,j}))=(-1)^j.
\]
For every $j$ in $\{1,\ldots,m\}$,
$E_n$ has different signs at the points $\xi_{n,j-1}$ and $\xi_{n,j}$.
Moreover, $E_n$ is continuous.
By the intermediate value theorem,
$E_n$ has a zero on
$(\xi_{n,j-1},\xi_{n,j})$.
\end{proof}

Figures~\ref{fig:char_pol_vanishing_factor_20},
\ref{fig:char_pol_vanishing_factor_21},
and~\ref{fig:char_pol_vanishing_factor_22}
show the geometric sense of Proposition~\ref{prop:rough_localization_of_solutions}.

\begin{figure}[htb]
\centering
\includegraphics{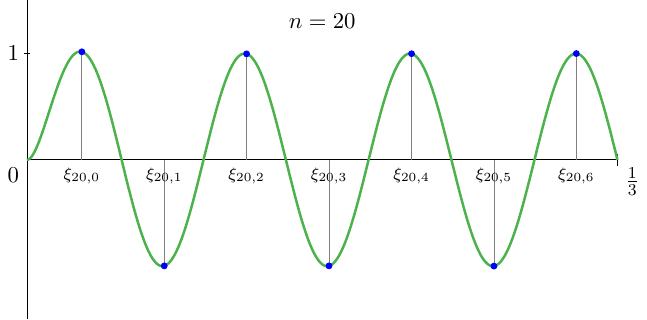}
\caption{Graph of $E_{20}$.
\label{fig:char_pol_vanishing_factor_20}}
\end{figure}

\begin{figure}[htb]
\centering
\includegraphics{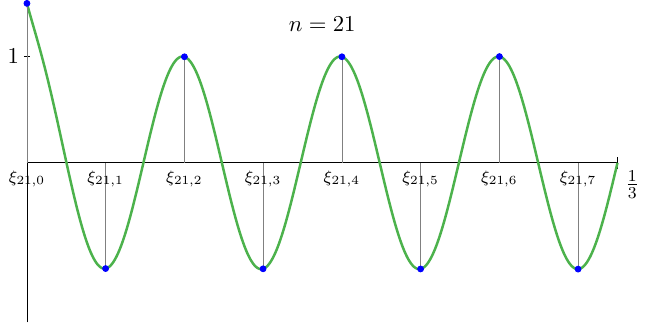}
\caption{Graph of $E_{21}$.
\label{fig:char_pol_vanishing_factor_21}}
\end{figure}

\begin{figure}[htb]
\centering
\includegraphics{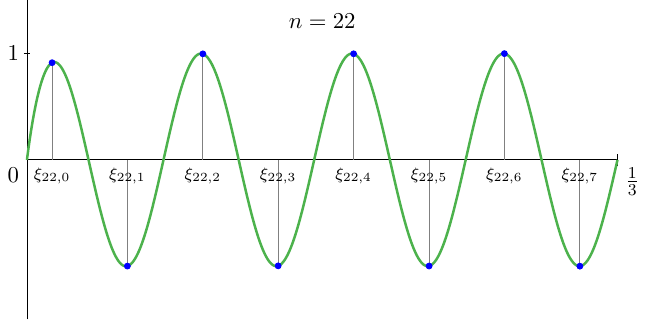}
\caption{Graph of $E_{22}$.
\label{fig:char_pol_vanishing_factor_22}}
\end{figure}

\begin{prop}
\label{prop:localization_of_strictly_positive_eigenvalues}
For every $j$ in $\{1,\ldots,m\}$,
$T_n(a)$ has an eigenvalue
in the interval
\[
\Bigl(G_n(\xi_{n,j-1}),
G_n(\xi_{n,j})\Bigr).
\]
\end{prop}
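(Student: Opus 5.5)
This follows by combining Proposition~\ref{prop:rough_localization_of_solutions} with the factorization of the characteristic polynomial in Proposition~\ref{prop:charpol_via_alpha}, and then mapping back through $G$. Here $G_n$ in the statement is presumably just $G$. I would read the interval as having endpoints $G(\xi_{n,j-1})$ and $G(\xi_{n,j})$. Since $G$ is increasing (it is the inverse of the strictly increasing $F$), this is the open interval $\bigl(G(\xi_{n,j-1}),G(\xi_{n,j})\bigr)$.

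\textbf{Step 1.} Fix $j\in\{1,\ldots,m\}$. By Proposition~\ref{prop:rough_localization_of_solutions} there is $\al^*\in(\xi_{n,j-1},\xi_{n,j})$ with $E_n(\al^*)=0$.

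\textbf{Step 2.} Check that $(\xi_{n,j-1},\xi_{n,j})\subset(0,1/3)$, so that $\la^*\eqdef G(\al^*)\in(0,L)$. The left endpoint satisfies $\xi_{n,j-1}\ge0$. For the right endpoint,
\[
\xi_{n,j}\le\xi_{n,m}=\frac{m+r/3}{n+3/2}=\frac{n/3}{n+3/2}<\frac13 .
\]
Since $G$ maps $(0,1/3)$ bijectively and increasingly onto $(0,L)$, we get $0<\la^*<L$ and $F(\la^*)=\al^*$.

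\textbf{Step 3.} Apply Proposition~\ref{prop:charpol_via_alpha} at $\la^*$:
\[
D_n(\la^*)=(-1)^m M(\al^*)\,q(\al^*)^{-n/3}E_n(\al^*)=0,
\]
because $E_n(\al^*)=0$. The other factors are finite on the open interval: $M$ is finite there since $\sin(\pi/3-\pi\al)>0$, and $q>0$. Hence $\la^*$ is an eigenvalue of $T_n(a)$.

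\textbf{Step 4.} Monotonicity of $G$ places $\la^*$ in $\bigl(G(\xi_{n,j-1}),G(\xi_{n,j})\bigr)$.

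The only point needing care is Step~2: confirming the endpoint bounds and that the factorization formula is valid throughout the open interval, including near $\al=0$, where $\la\to0$. Since the interval is open and strictly inside $(0,1/3)$, this should cause no trouble.
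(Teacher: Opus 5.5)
Your proposal is correct and follows the same route as the paper. You take a zero $\al^*$ of $E_n$ in $(\xi_{n,j-1},\xi_{n,j})$ from Proposition~\ref{prop:rough_localization_of_solutions}, use the factorization of Proposition~\ref{prop:charpol_via_alpha} to get $D_n(G(\al^*))=0$, and then apply monotonicity of $G$; you are also right that $G_n$ is simply $G$. Your checks that the interval lies inside $(0,1/3)$ and that $M$ and $q^{-n/3}$ are finite there just make explicit what the paper's one-line proof leaves implicit.
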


\begin{proof}
This is a simple consequence of Proposition~\ref{prop:rough_localization_of_solutions}.
Indeed, $G$ is a strictly increasing function,
and for every $\al$ in $(0,1/3)$,
the equation
$E_n(\al)=0$
is equivalent to the characteristic equation
$\det(T_n(\la-a))=0$
with $\la=G(\al)$.
\end{proof}

\begin{proof}[Proof of Theorem~\ref{thm:localization_of_spectrum}.]
By Proposition~\ref{prop:localization_of_strictly_positive_eigenvalues},
$T_n(a)$ has at least $m$ different eigenvalues on $(0,L)$.
Thus, by Proposition~\ref{prop:symmetry},
$T_n(a)$ has at least $3m$ eigenvalues different from the origin.
By Proposition~\ref{prop:origin_as_an_eigenvalue_of_Ta},
the origin is an eigenvalue of algebraic multiplicity at least $r$.
Since $T_n(a)$ has exactly $n$ eigenvalues, counting with algebraic multiplicities,
we conclude that we have found all of them;
that is, there are no other eigenvalues.
\end{proof}

\section{Justification of the fixed-point method}
\label{sec:analysis_of_transformed_char_equation_and_fixed_point}

In this section, we transform the characteristic equation to the form $\al=f_{n,j}(\al)$
and prove Theorem~\ref{thm:char_equation_as_fixed_point}.
We suppose that $n\in\bN$ and $n\ge 3$.

\begin{prop}
\label{prop:equation_with_fnj_and_inital_approximation}
For every $j$ in $\{1,\ldots,m\}$,
$\al_{n,j}$
is a fixed point of $f_{n,j}$.
Moreover,
\begin{equation}
\label{eq:al_nj_minus_si_nj_upper_bound}
|\al_{n,j}-\si_{n,j}|
<\frac{1}{5\left(n+\frac{3}{2}\right)}.
\end{equation}
Equivalently,
\begin{equation}
\label{eq:upper_and_lower_bounds_for_al}
\frac{j-\frac{7}{10}+\frac{r}{3}}{n+\frac{3}{2}}
<
\al_{n,j}
<
\frac{j-\frac{3}{10}+\frac{r}{3}}{n+\frac{3}{2}}.
\end{equation}
\end{prop}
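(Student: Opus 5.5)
The plan is to solve the equation $E_n(\al)=0$ explicitly for the phase, on the interval where $\al_{n,j}$ is already known to lie. Keep the notation $A_n$, $B_n$ from the proof of Proposition~\ref{prop:rough_localization_of_solutions}. By Theorem~\ref{thm:localization_of_spectrum}, $(0,L)$ contains exactly $m$ eigenvalues. By Proposition~\ref{prop:localization_of_strictly_positive_eigenvalues}, each of the $m$ disjoint intervals $(G(\xi_{n,k-1}),G(\xi_{n,k}))$ contains one of them. Hence each interval contains exactly one eigenvalue, and since $\la_{n,j}$ is the $j$-th in ascending order, $\al_{n,j}\in(\xi_{n,j-1},\xi_{n,j})$ and $E_n(\al_{n,j})=0$.

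First I control the phase on this interval. By Proposition~\ref{prop:theta}, $A_n'(\al)=(n+\frac32)\pi+\pi\tht'(\al)\ge (n+\frac32)\pi-\frac{\pi}{2}>0$, so $A_n$ is strictly increasing. Its endpoint values are $A_n(\xi_{n,j-1})=(j-1)\pi+\pi\tht(\xi_{n,j-1})$ and $A_n(\xi_{n,j})=j\pi+\pi\tht(\xi_{n,j})$, with $0\le\pi\tht<\frac1{10}$. Therefore $A_n(\al_{n,j})\in\bigl((j-1)\pi,\,j\pi+\frac1{10}\bigr)$.

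Next I use the equation itself. At $\al_{n,j}$ we have $\cos A_n=-B_n$ with $|B_n|\le p\le\frac12$, so $|\cos A_n(\al_{n,j})|\le\frac12$. On $[j\pi,j\pi+\frac1{10}]$ we have $|\cos|>\frac12$. Combining these, $A_n(\al_{n,j})$ must lie in $\bigl[(j-1)\pi+\frac\pi3,\,j\pi-\frac\pi3\bigr]$. Write $A_n(\al_{n,j})=(j-\frac12)\pi+\delta$ with $|\delta|\le\frac\pi6$. Then $\cos A_n=(-1)^j\sin\delta$, and the equation becomes $\sin\delta=(-1)^{r+j+1}p\,q^{n+1}$. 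Because $\delta$ lies in the principal branch, this gives $\delta=(-1)^{r+j+1}\arcsin\bigl(p(\al)q(\al)^{n+1}\bigr)$ at $\al=\al_{n,j}$.

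Substituting the definition of $A_n$ into $A_n(\al)=(j-\frac12)\pi+\delta$ and dividing by $(n+\frac32)\pi$ yields
\[
\al_{n,j}=\si_{n,j}-\frac{\tht(\al_{n,j})}{n+\frac32}+\expterm_{n,j}(\al_{n,j})=f_{n,j}(\al_{n,j}),
\]
so $\al_{n,j}$ is a fixed point of $f_{n,j}$. For the bound, $|\expterm_{n,j}|\le\frac{\pi/6}{(n+\frac32)\pi}=\frac{1}{6(n+\frac32)}$, and $0\le\tht\le 0.0245$ by Proposition~\ref{prop:theta}. Hence
\[
|\al_{n,j}-\si_{n,j}|\le\frac{0.0245+\frac16}{n+\frac32}<\frac{1}{5(n+\frac32)},
\]
which is \eqref{eq:al_nj_minus_si_nj_upper_bound}. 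Inequality \eqref{eq:upper_and_lower_bounds_for_al} then follows by unwinding $\si_{n,j}$. The main obstacle is the branch selection, namely showing that $\delta$ lies in $[-\frac\pi2,\frac\pi2]$ so that the $\arcsin$ inversion is legitimate. This rests on the overshoot of $A_n$ past $j\pi$ being less than $\frac1{10}$, which is smaller than the $\frac\pi3$-neighbourhood of $j\pi$ where $|\cos|>\frac12$.
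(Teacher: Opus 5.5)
Your proof is correct, but it runs in the opposite direction from the paper's. The paper rewrites $E_n(\al)=0$ as $\sin x=y$ and uses the general solution $x=j\pi+(-1)^j\arcsin y$ over all $j\in\bZ$. This makes the zero set of $E_n$ a union of the branch equations $\al=f_{n,j}(\al)$. The bounds $|\tht|<\frac{1}{10\pi}$ and $\frac{1}{\pi}\arcsin(pq^{n+1})<\frac16$ then confine every solution of the $j$-th branch to the window of radius $\frac{1}{5(n+3/2)}$ around $\si_{n,j}$, and finally the branch index is matched with the eigenvalue index.

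You instead first pin $\al_{n,j}$ to the $j$-th bracket $(\xi_{n,j-1},\xi_{n,j})$, using the count of eigenvalues in Theorem~\ref{thm:localization_of_spectrum} together with Proposition~\ref{prop:localization_of_strictly_positive_eigenvalues}. You then use the monotonicity of $A_n$ and $|\cos A_n(\al_{n,j})|\le\frac12$ to force the phase into $[(j-1)\pi+\frac{\pi}{3},\,j\pi-\frac{\pi}{3}]$, which selects the principal branch of $\arcsin$ directly.

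The paper's route needs no monotonicity argument and describes all zeros at once. However, its matching step appeals only to $0<\al<\frac13$, and that alone does not exclude the branches $j=0$ and $j=m+1$. The window around $\si_{n,0}$ meets $(0,\frac13)$ when $r\in\{1,2\}$, and $\si_{n,m+1}=\frac13$. Nor does $0<\al<\frac13$ show that each branch $j\in\{1,\ldots,m\}$ contributes exactly the $j$-th eigenvalue. Both points tacitly need the same counting that your bracket argument makes explicit. Your version is therefore more self-contained on the indexing, at the modest cost of the phase-window analysis. The final numerical estimate, $\frac16+\max\tht<\frac15$, is the same in both.
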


\begin{proof}
By Proposition~\ref{prop:charpol_via_alpha},
the characteristic equation is transformed to the form $E_n(\al)=0$; that is,
\[
\sin\left(\left(n+\frac{3}{2}\right)\pi\al
+\pi\tht(\al)
+\frac{\pi}{2}
- \frac{r\pi}{3}\right)
=(-1)^{r+1}
p(\al)
q(\al)^{n+1}.
\]
It is well known that if $y\in[-1,1]$, then
\[
\sin(x)=y
\qquad\Longleftrightarrow\qquad
\exists j\in\bZ\qquad
x=j\pi+(-1)^j\arcsin(y).
\]
Applying this fact with
$y=(-1)^{r+1} p(\al)q(a)^{n+1}$,
we see that the equation $E_n(\al)=0$
is equivalent to the union (over $j$ in $\bZ$)
of the equations
\[
\left(n+\frac{3}{2}\right)\pi\al
+\pi\tht(\al)
+\frac{\pi}{2}
-\frac{r\pi}{3}
=j\pi+(-1)^{r+j+1}
\arcsin\left(p(\al)q(\al)^{n+1}\right),
\]
i.e., with notation~\eqref{eq:initial_approximation}
and~\eqref{eq:expterm_def},
\begin{equation}
\label{eq:transformed_equation}
\al
= 
\si_{n,j}
- \frac{\tht(\al)}{n+\frac{3}{2}}
+ \expterm_{n,j}(\al).
\end{equation}
Since $|\tht(\al)|<\frac{1}{10\pi}$
and
\[
\frac{1}{\pi}
\arcsin\bigl(p(\al)q(\al)^{n+1}\bigr)
<\frac{1}{\pi}\arcsin\frac{1}{2}
=\frac{1}{6},
\]
every solution of~\eqref{eq:transformed_equation} should satisfy
\[
|\al-\si_{n,j}|
<\frac{\frac{1}{10\pi}+\frac{1}{6}}{\left(n+\frac{3}{2}\right)}
<\frac{1}{5\left(n+\frac{3}{2}\right)}.
\]
Taking into account that
$0<\al<\frac{1}{3}$,
we easily conclude that
$j\in\{1,\ldots,m\}$.
Thereby, we can identify the solution of~\eqref{eq:transformed_equation}
with $\al_{n,j}$.

Finally, the inequalities~\eqref{eq:upper_and_lower_bounds_for_al} are equivalent
to~\eqref{eq:al_nj_minus_si_nj_upper_bound}
because
\begin{align*}
\si_{n,j}+\frac{1}{5\left(n+\frac{3}{2}\right)}
&=\frac{j-\frac{1}{2}+\frac{r}{3}}{n+\frac{3}{2}}
+\frac{1}{5\left(n+\frac{3}{2}\right)}
=
\frac{j-\frac{3}{10}+\frac{r}{3}}{n+\frac{3}{2}},
\\[0.5ex]
\si_{n,j}-\frac{1}{5\left(n+\frac{3}{2}\right)}
&= \frac{j-\frac{1}{2}+\frac{r}{3}}{n+\frac{3}{2}}
- \frac{1}{5\left(n+\frac{3}{2}\right)}
= \frac{j-\frac{7}{10}+\frac{r}{3}}{n+\frac{3}{2}}.
\qedhere
\end{align*}
\end{proof}

\begin{prop}[the initial approximation of the eigenvalues]
\label{prop:initial_approximation_of_the_eigenvalues}
Let $j\in\{1,\ldots,m\}$.
Then,
\begin{equation}
\label{eq:initial_approximation_of_the_eigenvalues}
\left|\la_{n,j}-G(\si_{n,j})\right|
\le \frac{2\sqrt{3}\,\pi}{5\left(n+\frac{3}{2}\right)}.
\end{equation}
\end{prop}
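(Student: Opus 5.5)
Since $\la_{n,j}=G(\al_{n,j})$, the bound follows from the mean value theorem:
\[
|\la_{n,j}-G(\si_{n,j})|
=|G(\al_{n,j})-G(\si_{n,j})|
\le \Bigl(\sup_{0\le\al\le 1/3}|G'(\al)|\Bigr)\,|\al_{n,j}-\si_{n,j}|.
\]
Proposition~\ref{prop:equation_with_fnj_and_inital_approximation} gives $|\al_{n,j}-\si_{n,j}|<\frac{1}{5(n+\frac{3}{2})}$. Both $\al_{n,j}$ and $\si_{n,j}$ lie in $[0,1/3]$: indeed $0<\si_{n,j}\le\frac{m-\frac12+\frac r3}{n+\frac32}<\frac13$. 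So the whole statement reduces to the derivative bound
\[
\sup_{0\le\al\le 1/3}|G'(\al)|\le 2\sqrt{3}\,\pi=G'(0),
\]
where the value $G'(0)=2\pi\sqrt3$ was already quoted after~\eqref{eq:first_eigenvalue_rough_approximation}.

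To prove this bound, write $G(\al)=g(\be)$ with $\be=\frac{\pi}{3}-\pi\al$. Here
\[
g(\be)=\frac{1}{\sqrt[3]{4}}\cos^{4/3}\be\,\bigl(4-\sec^2\be\bigr)
=\frac{1}{\sqrt[3]{4}}\bigl(4\cos^{4/3}\be-\cos^{-2/3}\be\bigr),
\]
following the computation in the proof of Proposition~\ref{prop:roots_via_be}. Then $G'(\al)=-\pi g'(\be)$. Differentiating,
\[
g'(\be)=\frac{\sin\be}{\sqrt[3]{4}}\Bigl(-\tfrac{16}{3}\cos^{1/3}\be-\tfrac{2}{3}\cos^{-5/3}\be\Bigr),
\qquad
|g'(\be)|=\frac{2\sin\be}{3\sqrt[3]{4}\,\cos^{5/3}\be}\bigl(8\cos^{2}\be+1\bigr).
\]
It remains to show that this expression is increasing on $[0,\pi/3]$, so that its maximum is attained at $\be=\pi/3$. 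There we get
\[
\frac{2\cdot\frac{\sqrt3}{2}\cdot 3}{3\sqrt[3]{4}\cdot 2^{-5/3}}=\sqrt3\cdot 2^{5/3-2/3}=2\sqrt3.
\]
Multiplying by $\pi$ gives $2\sqrt3\,\pi$, consistent with $G'(0)$.

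The monotonicity is the only step requiring care. Put $c=\cos\be$ and $s=\sin\be$, and consider $h(\be)=s\,c^{-5/3}(8c^2+1)=s\,(8c^{1/3}+c^{-5/3})$. Then
\[
h'(\be)=c\,(8c^{1/3}+c^{-5/3})+s^2\Bigl(\tfrac{8}{3}c^{-2/3}\cdot\tfrac{1}{1}\cdot\tfrac{1}{1}\Bigr)\!\cdot\!\frac{1}{1}\,\text{-type terms},
\]
more precisely $h'(\be)=c\,(8c^{1/3}+c^{-5/3})+s^2\bigl(-\tfrac{8}{3}c^{-2/3}+\tfrac{5}{3}c^{-8/3}\bigr)$. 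On $[0,\pi/3]$ we have $c\in[\frac12,1]$ and $s^2=1-c^2$. Multiplying by $c^{8/3}>0$, the sign of $h'$ equals the sign of
\[
8c^4+c^2+(1-c^2)\Bigl(\tfrac{5}{3}-\tfrac{8}{3}c^2\Bigr)
=\tfrac{32}{3}c^4-\tfrac{10}{3}c^2+\tfrac{5}{3}.
\]
The discriminant of $32x^2-10x+5$ is $100-640<0$, so this expression is positive. Hence $h$ is increasing, $|G'|$ is maximal at $\al=0$, and combining this with the mean value estimate gives~\eqref{eq:initial_approximation_of_the_eigenvalues}.
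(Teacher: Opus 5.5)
Your proof is correct and is the paper's argument: the mean value theorem, together with \eqref{eq:al_nj_minus_si_nj_upper_bound} and $\|G'\|_\infty=G'(0)=2\sqrt{3}\,\pi$. Your positivity check of $\frac{32}{3}c^4-\frac{10}{3}c^2+\frac{5}{3}$ becomes, after substituting $c^2=1/(1+t^2)$ and up to a positive factor, the numerator $1+\frac{5}{27}t^4$ of $G''$ in \eqref{eq:G_derivatives}, so it amounts to the paper's observation that $G''<0$. The only fix needed is presentational: delete the garbled first display for $h'(\be)$ and keep the ``more precisely'' formula, which is correct.
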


\begin{proof}
Explicit formulas for $G'$ and $G''$
(see~\eqref{eq:G_derivatives})
show that $G'$ is a positive decreasing function.
Therefore,
\[
\|G'\|_\infty = G'(0) = 2\sqrt{3}\,\pi.
\]
Now,~\eqref{eq:al_nj_minus_si_nj_upper_bound} and
the mean value theorem yield~\eqref{eq:initial_approximation_of_the_eigenvalues}.
\end{proof}

Figure~\ref{fig:G_and_initial_approximation} shows the approximation of $\la_{n,j}$ by $G(\si_{n,j})$
for $n=64$.

\begin{figure}[htb]
\centering
\includegraphics{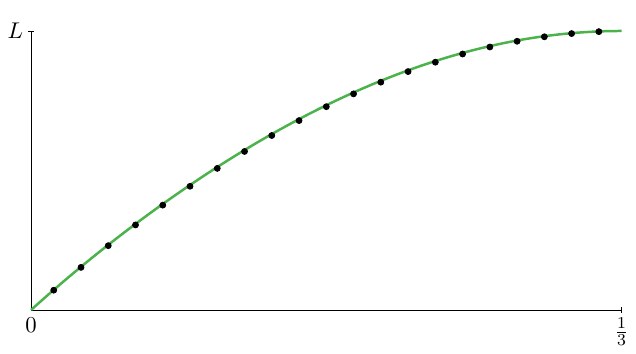}
\caption{The plot of $G$
and the points $(\si_{64,j},\la_{64,j})$,
$1\le j\le 21$.
\label{fig:G_and_initial_approximation}}
\end{figure}

\begin{prop}
\label{prop:fnj_not_contractive}
For every $j$ in $\{1,\ldots,m\}$,
$f_{n,j}$ is not contractive on $[0,1/3]$.
\end{prop}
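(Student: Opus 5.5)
The plan is to show that the derivative of $f_{n,j}$ at the endpoint $\al=0$ has absolute value at least $1$. Since $f_{n,j}$ is continuously differentiable on $[0,1/3]$, this rules out a Lipschitz constant $k<1$. Indeed, difference quotients $\frac{f_{n,j}(\al)-f_{n,j}(0)}{\al}$ tend to $f_{n,j}'(0)$ as $\al\to0^+$, so for every $k<1$ some pair of points violates $|f_{n,j}(x)-f_{n,j}(y)|\le k|x-y|$. The exponential term $\expterm_{n,j}$ is the reason: at $\al=0$ the factor $q(0)^{n+1}$ equals $1$, so nothing is small there.

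The first step is to evaluate the ingredients at $\al=0$, that is, at $\be=\pi/3$. Directly from~\eqref{eq:def_p} and~\eqref{eq:def_q},
\[
p(0)=\frac{\sqrt3}{\sqrt{12}}=\frac12,\qquad q(0)=1 .
\]
Differentiating in $\be$ and using $d\be/d\al=-\pi$ gives
\[
p'(0)=-\pi\,\frac{9\sec^2\be}{(9+\tan^2\be)^{3/2}}\Big|_{\be=\pi/3}=-\frac{\pi\sqrt3}{2},
\qquad
q'(0)=-\pi\,\frac{\sin\be}{2\cos^2\be}\Big|_{\be=\pi/3}=-\pi\sqrt3 .
\]
Proposition~\ref{prop:theta} gives $\tht'(0)=1/2$.

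The second step is to differentiate $h\eqdef pq^{n+1}$ at $0$:
\[
h(0)=\tfrac12,\qquad
h'(0)=p'(0)+(n+1)p(0)q'(0)=-\frac{(n+2)\pi\sqrt3}{2}.
\]
Because $h(0)=1/2$ lies inside $(-1,1)$, $\arcsin$ is smooth near $h(0)$, and $\sqrt{1-h(0)^2}=\sqrt3/2$. By~\eqref{eq:expterm_def} this gives
\[
\expterm_{n,j}'(0)=\frac{(-1)^{r+j+1}}{(n+\frac32)\pi}\cdot\frac{h'(0)}{\sqrt3/2}
=(-1)^{r+j}\,\frac{n+2}{n+\frac32}.
\]
Then, by~\eqref{eq:fnj_def},
\[
f_{n,j}'(0)=\frac{-\frac12+(-1)^{r+j}(n+2)}{n+\frac32},
\]
which equals $1$ if $r+j$ is even and $-\frac{n+5/2}{n+3/2}$ if $r+j$ is odd. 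In both cases $|f_{n,j}'(0)|\ge1$, and the limiting argument above finishes the proof.

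The main obstacle is purely computational: getting the signs and constants of $p'(0)$ and $q'(0)$ right. When $r+j$ is even the derivative is exactly $1$, so there is no slack at all. Any sign error would wrongly produce a value below $1$. For this reason I will double-check $p'(0)$ and $q'(0)$ against the bounds in Proposition~\ref{prop:p_q_properties}, which are attained exactly at $\al=0$.
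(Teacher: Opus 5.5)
Your proof is correct and follows the same argument as the paper: compute $f_{n,j}'(0)$ directly and observe that $|f_{n,j}'(0)|\ge 1$ for both parities of $r+j$. Your constant $-\frac{1}{2}$ in the numerator, which comes from $-\tht'(0)/(n+\frac{3}{2})$, is the correct one; the paper prints $+\frac{1}{2}$, which only swaps which parity of $r+j$ gives $|f_{n,j}'(0)|=1$ and leaves the conclusion unchanged.
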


\begin{proof}
A direct computation yields
\[
f_{n,j}'(0)
=\frac{\frac{1}{2}+(-1)^{j+r} (n+2)}{n+\frac{3}{2}}.
\]
If $j+r$ is odd, then $|f_{n,j}'(0)|=1$.
If $j+r$ is even, then
$|f_{n,j}'(0)|>1$.
In both cases, $f_{n,j}$ is not contractive on $[0,1/3]$.
\end{proof}

Inspired by
Propositions~\ref{prop:equation_with_fnj_and_inital_approximation}
and~\ref{prop:fnj_not_contractive},
we will exclude a neighborhood of $0$
and find a subinterval of $[0,1/3]$ where $f_{n,j}$ is a contraction.
Here, our main technical task
is to estimate $|\expterm_{n,j}'|$ from above.

\begin{lem}
\label{lem:log_q_denom_lower_bound}
Let $\al\in(0,1/12]$.
Then,
\begin{equation}
\label{eq:log_q_denom_lower_bound}
\log\frac{1}{q(\al)}
\ge 6\log(2)\,\al.
\end{equation}
\end{lem}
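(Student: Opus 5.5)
Set $h(\al)\eqdef\log\frac{1}{q(\al)}=\log\bigl(2\cos(\frac{\pi}{3}-\pi\al)\bigr)$ and compare it with the linear function $\ell(\al)\eqdef 6\log(2)\,\al$ on $[0,1/12]$. The plan is a concavity argument: both functions vanish at $\al=0$, and if $h$ is concave we only have to check the inequality at the right endpoint $\al=1/12$.

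First, $h(0)=\log(2\cos\frac{\pi}{3})=\log 1=0=\ell(0)$.

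Next, differentiating gives
\[
h'(\al)=\pi\tan\left(\frac{\pi}{3}-\pi\al\right),
\qquad
h''(\al)=-\frac{\pi^2}{\cos^2\left(\frac{\pi}{3}-\pi\al\right)}<0,
\]
so $h$ is concave on $[0,1/3)$. On $[0,1/12]$ the concave function $h$ therefore lies above the chord through $(0,h(0))$ and $(1/12,h(1/12))$, that is, $h(\al)\ge 12\,h(1/12)\,\al$.

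At the endpoint, $\frac{\pi}{3}-\frac{\pi}{12}=\frac{\pi}{4}$, so $h(1/12)=\log(2\cos\frac{\pi}{4})=\log\sqrt{2}=\frac{1}{2}\log 2$. Hence $12\,h(1/12)=6\log 2$, and the chord bound becomes $h(\al)\ge 6\log(2)\,\al$ for all $\al\in[0,1/12]$. This is exactly~\eqref{eq:log_q_denom_lower_bound}, with equality at both endpoints.

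No step is a real obstacle. The key observation is that the constant $6\log 2$ is precisely the chord slope of $h$ up to the point where $\frac{\pi}{3}-\pi\al=\frac{\pi}{4}$, which makes the concavity argument sharp. The only care needed is the sign of $h''$, which is immediate since $\cos(\frac{\pi}{3}-\pi\al)>0$ on $[0,1/3)$.
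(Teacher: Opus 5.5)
Your proof is correct and is essentially the paper's argument: concavity of $\log\bigl(2\cos(\frac{\pi}{3}-\pi\al)\bigr)$ on $[0,1/12]$ gives the chord bound through the endpoint values $0$ and $\frac{1}{2}\log 2$. The paper writes this chord bound as the convex combination $u(\al)\ge(1-12\al)u(0)+12\al\,u(1/12)$, and you additionally supply the explicit second derivative.
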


\begin{proof}
In this proof,
let $u(\la)$ be the left-hand side of~\eqref{eq:log_q_denom_lower_bound}:
\[
u(\al)
\eqdef
\log \frac{1}{q(\al)};
\qquad \text{that is,}\qquad
u(\al)
=\log\left(2\cos\left(\frac{\pi}{3}
-\pi\al\right)\right).
\]
It is easy to see that $u''<0$ on $[0,1/12]$.
Therefore, $u$ is strictly concave on $[0,1/12]$, and
\[
u(\al)
= u\left((1-12\al)\cdot 0+12\al\cdot\frac{1}{12}\right)
\ge (1-12\al)u(0)+12\al\,u(1/12)
= 6\log(2)\,\al.
\qedhere
\]
\end{proof}

\begin{lem}\label{lem:qn_upper_bound}
Let $n\ge 3$ and
$\al\in\left[\frac{1}{3(n+1)},\frac{1}{3}\right]$.
Then,
\begin{equation}
\label{eq:qn_upper_bound}
q(\al)^{n+1}
\le
\frac{1}{4}.
\end{equation}
\end{lem}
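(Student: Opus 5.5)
Since $q$ is strictly decreasing on $[0,1/3]$ (Proposition~\ref{prop:p_q_properties}) and $0<q\le 1$, the function $\al\mapsto q(\al)^{n+1}$ is also decreasing there. So it suffices to prove the bound at the left endpoint $\al_0\eqdef\frac{1}{3(n+1)}$:
\[
q(\al)^{n+1}\le q(\al_0)^{n+1}
\qquad\text{for all } \al\in\left[\al_0,\tfrac{1}{3}\right].
\]

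Because $n\ge3$, we have $n+1\ge4$, hence $\al_0\le\frac{1}{12}$. Lemma~\ref{lem:log_q_denom_lower_bound} therefore applies at $\al_0$ and gives
\[
(n+1)\log\frac{1}{q(\al_0)}
\ge 6\log(2)\,(n+1)\,\al_0
=2\log 2 .
\]
Exponentiating, we get $q(\al_0)^{n+1}\le \enumber^{-2\log 2}=\frac14$. This proves~\eqref{eq:qn_upper_bound}.

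The argument has no real obstacle. The only points to check are that $\al_0$ lies in the range $(0,1/12]$ of Lemma~\ref{lem:log_q_denom_lower_bound}, which is exactly where $n\ge 3$ is used, and that monotonicity of $q$ lets us reduce to the single point $\al_0$.

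As an independent check for the part $\al\ge 1/12$, one can compute directly $q(1/12)=\frac{1}{2\cos(\pi/4)}=2^{-1/2}$. This gives $q(\al)^{n+1}\le 2^{-(n+1)/2}\le 2^{-2}=\frac14$ there, again using $n+1\ge4$.
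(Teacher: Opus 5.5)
Your proof is correct and is the same as the paper's. You use the monotonicity of $q$ to reduce to the left endpoint $\frac{1}{3(n+1)}$, apply Lemma~\ref{lem:log_q_denom_lower_bound} there to get $(n+1)\log\frac{1}{q}\ge 2\log 2$, and exponentiate; you also make explicit the check that $n\ge 3$ gives $\frac{1}{3(n+1)}\le\frac{1}{12}$, which the paper leaves implicit.
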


\begin{proof}
First, we use the fact that $1/q$ is increasing.
Then, we apply Lemma~\ref{lem:log_q_denom_lower_bound}
with $\al=\frac{1}{3(n+1)}$:
\[
(n+1)\log\frac{1}{q(\al)}
\ge (n+1)\log\frac{1}{q\left(\frac{1}{3(n+1)}\right)}
\ge \frac{6\log(2)\,(n+1)}{3(n+1)}
= 2\log(2).
\]
Multiplying by $-1$ and applying the exponential function we obtain~\eqref{eq:qn_upper_bound}.
\end{proof}

\begin{prop}
\label{prop:f_n_k_contractive}
Let $n\ge 3$ and $j\in\{1,\ldots,m\}$.
Then, $f_{n,j}$ is a contraction on
$\left[\frac{1}{3(n+1)},\,\frac{1}{3}\right]$,
and its Lipschitz coefficient is less than $\frac{11}{18}$.
\end{prop}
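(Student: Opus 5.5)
The plan is to bound $|f_{n,j}'|$ uniformly on $I_n\eqdef\left[\frac{1}{3(n+1)},\frac13\right]$ and then use the mean value theorem. From~\eqref{eq:fnj_def},
\[
f_{n,j}'(\al)=-\frac{\tht'(\al)}{n+\frac32}+\expterm_{n,j}'(\al).
\]
By Proposition~\ref{prop:theta}, the first term is at most $\frac{1/2}{n+3/2}\le\frac{1}{9}$, since $n\ge3$. All the work is in $\expterm_{n,j}'$. Write $\be=\frac{\pi}{3}-\pi\al$ and $y\eqdef p\,q^{n+1}$. Then
\[
|\expterm_{n,j}'(\al)|=\frac{1}{(n+\frac32)\pi}\,\frac{|p'|\,q^{n+1}+(n+1)\,p\,q^{n}|q'|}{\sqrt{1-y^2}}.
\]
By Lemma~\ref{lem:qn_upper_bound} and Proposition~\ref{prop:p_q_properties}, $0\le y\le\frac12\cdot\frac14=\frac18$ on $I_n$, so $1/\sqrt{1-y^2}\le 8/\sqrt{63}$.

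For the first summand in the numerator I use $|p'|\le\frac{\pi\sqrt3}{2}$ and $q^{n+1}\le\frac14$. Its contribution is then at most $\frac{\sqrt3}{8(n+3/2)}\cdot\frac{8}{\sqrt{63}}$, which is about $0.05$ for $n\ge3$.

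For the second summand I use the exact logarithmic derivative rather than the crude bound $|q'|\le\pi\sqrt3$. Since $q=1/(2\cos\be)$ and $d\be/d\al=-\pi$, we have $q'=-\pi q\tan\be$. Hence
\[
(n+1)\,p\,q^n|q'|=\pi\,(n+1)\,p\tan\be\; q^{n+1}.
\]
On $[0,1/3]$ we have $0\le\tan\be\le\sqrt3$, and $p=\tan\be/\sqrt{9+\tan^2\be}\le\frac12$. So $p\tan\be\le\frac{\sqrt3}{2}$, and combined with $q^{n+1}\le\frac14$ the contribution to $|\expterm_{n,j}'|$ is at most
\[
\frac{n+1}{n+\frac32}\cdot\frac{\sqrt3}{8}\cdot\frac{8}{\sqrt{63}}<0.22 .
\]

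The main obstacle is this factor $n+1$ coming from differentiating $q^{n+1}$. It has to be absorbed by the denominator $n+\frac32$ and by $q^{n+1}\le\frac14$. This is exactly why the left endpoint $\frac{1}{3(n+1)}$ is needed, since near $0$ the derivative is of size $1$ (Proposition~\ref{prop:fnj_not_contractive}). Everything hinges on the sharp constant in Lemma~\ref{lem:qn_upper_bound}. If the numerical margin turns out too tight, I would split $I_n$ at $\al=1/12$. On $[1/12,1/3]$ one has $q\le q(1/12)=1/\sqrt2$, so $(n+1)q^{n+1}\le(n+1)2^{-(n+1)/2}$, which is tiny. On $\left[\frac{1}{3(n+1)},\frac1{12}\right]$ I would keep the estimate above.

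Adding the three pieces gives roughly $0.111+0.05+0.22<0.39<\frac{11}{18}$. So $|f_{n,j}'|<\frac{11}{18}$ on $I_n$, and by the mean value theorem $f_{n,j}$ is Lipschitz on $I_n$ with coefficient less than $\frac{11}{18}$. To call it a contraction of $I_n$ in the fixed-point sense, I would also check that $f_{n,j}$ maps $I_n$ into $I_n$. The upper bound follows from $\si_{n,j}\le\frac{m-1/2+r/3}{n+3/2}$ together with the sign information on $\tht$ and the bound $|\expterm_{n,j}|\le\frac{1}{6(n+3/2)}$. The lower bound for $j=1$ is delicate. There I would use the estimate $q^{n+1}\le\frac14$ to get the sharper bound $|\expterm_{n,1}|\le\frac{1}{\pi(n+3/2)}\arcsin\frac18$, together with $\tht\le0.0245$. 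This gives
\[
f_{n,1}(\al)\ge\frac{0.5-0.0245-0.04}{n+\frac32}=\frac{0.4355}{n+\frac32}\ge\frac{1}{3(n+1)}\qquad\text{for }n\ge3.
\]
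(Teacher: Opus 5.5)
Your proposal is correct and follows essentially the same route as the paper. You bound $|\expterm_{n,j}'|$ using $|p'|\le\frac{\pi\sqrt3}{2}$, $p|q'|/q=\pi p\tan\be\le\frac{\pi\sqrt3}{2}$, $q^{n+1}\le\frac14$ from Lemma~\ref{lem:qn_upper_bound} and $\sqrt{1-p^2q^{2(n+1)}}\ge\sqrt{63}/8$, absorb the factor $n+1$ into $n+\frac32$, add $\frac19$ for the $\tht'$ term, and separately check that $f_{n,j}$ maps $\left[\frac{1}{3(n+1)},\frac13\right]$ into itself. Your constants are in fact slightly tighter than the paper's, whose displayed estimate effectively uses $\frac12$ where $\frac14$ is available, so the fallback split at $\al=\frac1{12}$ is unnecessary.
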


\begin{proof}
1. Let $\al\in\left[\frac{1}{3(n+1)},\,\frac{1}{3}\right]$.
First, we have to show that
$f_{n,j}(\al)\in\left[\frac{1}{3(n+1)},\,\frac{1}{3}\right]$.
Propositions~\ref{prop:theta}, \ref{prop:p_q_properties}, and Lemma~\ref{lem:qn_upper_bound}
imply that
\[
0<\tht(\al)<\frac{1}{10\pi},\qquad
0<p(\al)q(\al)^{n+1}\le\frac{1}{8},\qquad
\]
\begin{equation}
\label{eq:arcsin_general_bound}
\frac{1}{\pi}\arcsin\bigl(p(\al)q(\al)^{n+1}\bigr)
\le\frac{\arcsin(1/8)}{\pi}
<\frac{1}{25},
\end{equation}
\begin{equation}
\label{eq:h_general_bound}
|\expterm_{n,j}(\al)|
\le
\frac{1}{25\left(n+\frac{3}{2}\right)}.
\end{equation}
Thus,
\[
f_{n,j}(\al)
=
\frac{j
-\frac{1}{2}
+\frac{r}{3}
-\tht(\al)}%
{n+\frac{3}{2}}
+\expterm_{n,j}(\al)
\le
\frac{m-\frac{1}{2}+\frac{r}{3}+\frac{1}{25}}{n+\frac{3}{2}}
<
\frac{m+\frac{r}{3}}{n}
=\frac{1}{3}.
\]
On the other hand, since
$n+\frac{3}{2}\le\frac{9}{8}(n+1)$,
\[
f_{n,j}(\al)
\ge
\frac{1
-\frac{1}{10\pi}
-\frac{1}{2}
-\frac{1}{25}}%
{n+\frac{3}{2}}
>
\frac{\frac{2}{5}}{\frac{9}{8}(n+1)}
>
\frac{1}{3(n+1)}.
\]
2. Let us estimate the derivative of $f_{n,j}$.
\[
\expterm_{n,j}'(\al)
=
(-1)^{r+j+1}
\frac{\left(p'(\al)+(n+1)\frac{p(\al)q'(\al)}{q(\al)}\right)q(\al)^{n+1}}%
{\left(n+\frac{3}{2}\right)\pi\,\sqrt{1-p(\al)^2 q(\al)^{2(n+1)}}}.
\]
It is easy to verify that the function
$\al\mapsto\frac{p(\al)|q'(\al)|}{q(\al)}$ decreases on $[0,1/3]$
taking values from $\pi\sqrt{3}/2$ to $0$.
Moreover,
$1-p(\al)^2 q(\al)^{2(n+1)}\ge 63/64$.
Therefore,
\[
|\expterm_{n,j}'(\al)|
\le \frac{(n+2)\frac{\sqrt{3}}{2}\cdot\frac{1}{2}}%
{\left(n+\frac{3}{2}\right)\,\frac{\sqrt{63}}{8}}
\le
\frac{10}{9}\cdot\frac{2}{\sqrt{21}}
<\frac{1}{2}.
\]
Finally,
\[
|f_{n,j}'(\al)|
\le
\frac{|\tht'(\al)|}{n+\frac{3}{2}}
+|\expterm_{n,j}'(\al)|
\le \frac{\frac{1}{2}}{3+\frac{3}{2}}
+\frac{1}{2}
<\frac{11}{18}.
\qedhere
\]
\end{proof}

Theorem~\ref{thm:char_equation_as_fixed_point} follows from Proposition~\ref{prop:f_n_k_contractive}.

Figure~\ref{fig:fixed_point} shows the functions $f_{n,j}$ and their fixed points for $n=3$ and $n=10$.

\begin{figure}[htb]
\centering
\includegraphics{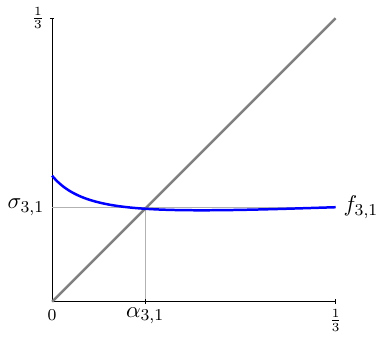}\quad
\includegraphics{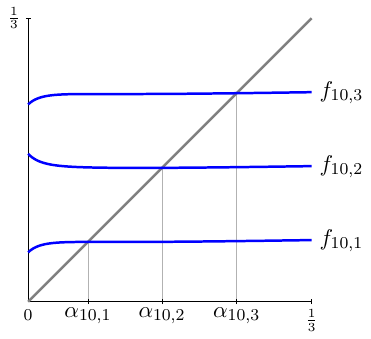}
\caption{Functions $f_{3,1}$ (left) and $f_{10,1}$, $f_{10,2}$, $f_{10,3}$ (right) and their fixed points.
\label{fig:fixed_point}}
\end{figure}

\section{Asymptotic distribution of the eigenvalues}
\label{sec:asymptotic_distribution}

Hirschman~\cite{Hirschman1967} found a general formula for the asymptotic distribution of the eigenvalues of banded Toeplitz matrices,
but that formula is not easy to use in examples.
In this section, basing ourselves on~\eqref{eq:al_nj_minus_si_nj_upper_bound},
we provide a good estimate for the counting function.
As a simple consequence, we get the asymptotic distribution;
that is, Theorem~\ref{thm:asymptotic_distribution}.

Recall that $H_n$ is defined by~\eqref{eq:empirical_distribution_function}.
So, $n H_n$ is the \emph{counting function}
of the strictly positive eigenvalues of $T_n(a)$:
\[
n H_n(x) =
\#\Bigl\{j\in\{1,\ldots,m\}\colon
\quad
0 < \la_{n,j} \le x \Bigr\}.
\]
Zizler, Zuidwijk, Taylor, and Arimoto~\cite[Theorem~2.3]{ZizlerZuidwijkTaylorArimoto2002} proved that for banded Hermitian Toeplitz matrices $T_n(b)$,
the corresponding counting function admits approximations of the form
$|n H_n(x)-n F_b(x)|\le 6d$, 
where $d$ is the half-width of the band
and $F_b$ is the distribution function associated to $b$.
For the non-Hermitian Toeplitz matrices
studied in this paper,
using~\eqref{eq:upper_and_lower_bounds_for_al},
we obtain an approximation of a similar form,
but our function $F$ is not directly related to the values of $a$.
Moreover, in our example,
after appropriate adjustments,
we obtain a smaller upper bound for the error.

\begin{prop}[approximation of the counting function]
\label{prop:estimates_for_the_counting_function}
Let $n\ge3$ and $x\in[0,L]$.
Then
\begin{equation}
\label{eq:estimates_for_the_counting_function}
\left(n+\frac{3}{2}\right)\,F(x)
-\frac{r}{3}-\frac{7}{10}
\le n H_n(x)
\le\left(n+\frac{3}{2}\right)\,F(x)
-\frac{r}{3}+\frac{7}{10}.
\end{equation}
Therefore,
\begin{equation}
\label{eq:approximation_of_the_counting_function}
\left|n H_n(x)
-\left(\left(n+\frac{3}{2}\right)\,F(x)
-\frac{r}{3}\right)
\right|
\le \frac{7}{10}.
\end{equation}
\end{prop}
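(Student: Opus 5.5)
Since $F$ is strictly increasing and $\la_{n,j}=G(\al_{n,j})$ with $\al_{n,j}=F(\la_{n,j})$, the condition $\la_{n,j}\le x$ is equivalent to $\al_{n,j}\le y$, where $y\eqdef F(x)\in[0,1/3]$. Thus $nH_n(x)=\#\{j\in\{1,\ldots,m\}\colon \al_{n,j}\le y\}$. The plan is to count the $j$ using the two-sided bounds \eqref{eq:upper_and_lower_bounds_for_al} from Proposition~\ref{prop:equation_with_fnj_and_inital_approximation}. Since $G$ is increasing and the eigenvalues are ordered, the $\al_{n,j}$ are strictly increasing in $j$. Hence the counting set is an initial segment $\{1,\ldots,N\}$ with $N=nH_n(x)$. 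Set $t\eqdef (n+\frac32)y-\frac r3$; this is the quantity to compare with $N$.

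\emph{Lower bound.} If $j\le t+\frac{3}{10}$, then by the right inequality in \eqref{eq:upper_and_lower_bounds_for_al},
\[
\al_{n,j}<\frac{j-\frac{3}{10}+\frac r3}{n+\frac32}\le y,
\]
so $j$ is counted, provided $j\le m$. Hence $N\ge \min\{m,\lfloor t+\frac{3}{10}\rfloor\}$, and $\lfloor t+\frac3{10}\rfloor> t-\frac{7}{10}$. If the minimum is $m$, we still need $m\ge t-\frac7{10}$. Since $y\le\frac13$, we have $t\le \frac{n}{3}+\frac12-\frac r3=m+\frac12$, which gives $m\ge t-\frac12\ge t-\frac{7}{10}$. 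Also $N\ge 0$ trivially, which covers the case $t<\frac7{10}$.

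\emph{Upper bound.} If $j$ is counted, the left inequality in \eqref{eq:upper_and_lower_bounds_for_al} gives
\[
\frac{j-\frac{7}{10}+\frac r3}{n+\frac32}<\al_{n,j}\le y,
\]
that is, $j<t+\frac{7}{10}$. Since the counted indices form an initial segment, $N<t+\frac7{10}$. This gives \eqref{eq:estimates_for_the_counting_function}, and \eqref{eq:approximation_of_the_counting_function} is just a restatement. The endpoint $x=0$ needs a separate check: there $y=0$ and $N=0$, while $t=-\frac r3$, which fits.

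The main delicate step is the boundary case $N=m$ in the lower bound, because the count cannot exceed $m$. It is resolved by the estimate $t\le m+\frac12$ above, which uses $n=3m+r$. Everything else is direct bookkeeping with floor functions.
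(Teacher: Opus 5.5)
Your proof is correct and follows the paper's argument. Both bounds come from the two-sided estimate \eqref{eq:upper_and_lower_bounds_for_al}: the upper bound from its left inequality, the lower bound from its right one, and your check via $t\le m+\frac12$ that the indices stay $\le m$ makes explicit a point the paper leaves implicit.

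The only loose end is the case $N=0$ with $x>0$ in the upper bound. It follows from $t\ge-\frac{r}{3}>-\frac{7}{10}$, exactly as in your $x=0$ check.
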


\begin{proof}
Let $J\eqdef \{j\in\{1,\ldots,m\}\colon\ \la_{n,j}\le x\}$.
Since $F$ is a strictly increasing function,
\[
J = \bigl\{j\in\{1,\ldots,m\}\colon\ \al_{n,j}\le F(x)\bigr\}.
\]
1. We start with proving the upper bound.
If $j\in J$, then,
by~\eqref{eq:upper_and_lower_bounds_for_al},
\[
F(x)
\ge \al_{n,j}
> \frac{j-\frac{7}{10}+\frac{r}{3}}{n+\frac{3}{2}}.
\]
Therefore, the set $J$ can be bounded from above in terms of $F(x)$:
\[
J \subseteq
\bN\cap\left[1, \left(n+\frac{3}{2}\right)F(x) + \frac{7}{10} - \frac{r}{3}\right).
\]
This implies the upper bound in~\eqref{eq:estimates_for_the_counting_function}.

\medskip\noindent
2. To prove the lower bound,
we suppose that
$1\le j\le \left(n+\frac{3}{2}\right)\,F(x) +\frac{3}{10} - \frac{r}{3}$.
Then, by~\eqref{eq:upper_and_lower_bounds_for_al},
\[
\al_{n,j}
<\frac{j-\frac{3}{10}+\frac{r}{3}}{n+\frac{3}{2}}
\le F(x).
\]
Thus, we get a lower bound for the set $J$:
\[
\bN \cap \left[1,\,\left(n+\frac{3}{2}\right)\,F(x)+\frac{3}{10}-\frac{r}{3}\right]
\subseteq J.
\]
It is easy to see that if $A\in\bR$, then $\#(\bN\cap[1,A])>A-1$.
This implies the lower bound
in~\eqref{eq:estimates_for_the_counting_function}.
\end{proof}

\begin{proof}[Proof of Theorem~\ref{thm:asymptotic_distribution}.]
Divide~\eqref{eq:estimates_for_the_counting_function} over $n$:
\[
\left(1+\frac{3}{2n}\right)\,F(x)
-\frac{7}{10n}
-\frac{r}{3n}
\le H_n(x)
\le
\left(1+\frac{3}{2n}\right)\,F(x)
+\frac{7}{10n}
-\frac{r}{3n}.
\]
As $n\to\infty$, the lower and upper estimates tend to $F(x)$,
and we obtain~\eqref{eq:limiting_distribution}.
\end{proof}

\section{A general scheme to solve equations
by asymptotic expansions}
\label{sec:scheme_for_solving_equations}

In this section,
we provide a scheme
for solving equations of the form
$\Omega(z(h),h)=0$
by asymptotic expansions.
We suppose that $h$ is a small parameter,
$\Omega$ has a known asymptotic expansion in three terms with respect to $h$,
and we find a three term expansion of $z(h)$.
We start with a one-term approximation (Proposition~\ref{prop:scheme_solve_equations_by_asymptotic_expansions1}),
then derive expansions with two and three terms (Propositions~\ref{prop:scheme_solve_equations_by_asymptotic_expansions2} and~\ref{prop:scheme_solve_equations_by_asymptotic_expansions3}).
Of course, similar computations can be done for expansions of higher orders.
Our proofs will be based on the mean value theorem 
(in fact, we only use the mean value inequality) and the Taylor theorem.
Such schemes are known among the experts
(see, e.g., de Bruijn~\cite[Chapter~2]{Bruijn1958}),
but it is not easy to find a bibliographic reference corresponding to our needs.
Instead of writing the residue terms as $O(h),O(h^2),\dots$,
we prefer to deal with more explicit upper bounds involving some coefficients $c_1,c_2,\dots$.
The main reason for this hard choice is that the original equation may depend on some parameters, and the coefficients $c_1,c_2,\ldots$ may depend on parameters.

Given a subset $X$ of $\bR$,
we denote by $\closure(X)$
and $\interior(X)$ the closure and interior of $X$,
respectively.
This section is independent on the previous sections, and the notation for the special objects in this section is independent:
for example, the function $\rho_1$
in Proposition~\ref{prop:scheme_solve_equations_by_asymptotic_expansions1}
does not have any relation with the number $\rho$ in Proposition~\ref{prop:roots_via_be}.

\begin{prop}[one-term approximation]
\label{prop:scheme_solve_equations_by_asymptotic_expansions1}
Suppose that $X,D,\Omega,z,\om_0,s_0,\ell,\rho_1,c_1$ are sets, numbers,
and functions 
satisfying the following assumptions.
\begin{enumerate}
\item[A1.]
\mylabel{assumption_X}
$X$ is an interval, $X\subseteq\bR$.
\item[A2.]
\mylabel{assumption_D}
$D$ is a nonempty subset of $\bR$
such that $0\notin D$.
\item[A3.]
\mylabel{assumption_Omega}
$\Omega\colon X\times D\to\bR$.
\item[A4.]
\mylabel{assumption_z}
$z\colon D\to X$
such that for every $h$ in $D$,
$\Omega(z(h),h)=0$.
\item[A5.]
\mylabel{assumption_om0}
$\om_0\colon X\to\bR$
is continuous on $X$
and differentiable on $\interior(X)$.
\item[A6.]
\mylabel{assumption_s0}
$s_0\in\interior(X)$
and $\om_0(s_0)=0$.
\item[A7.]
\mylabel{assumption_om0der1}
$\ell\in (0,+\infty)$
and for every $x$ in $\interior(X)$,
\[
|\om_0'(x)|\ge \ell.
\]
\item[A8.]
\mylabel{assumption_Omega_expansion0}
$\rho_1\colon X\times D\to\bR$
is defined by the following identity which holds for every $x$ in $X$ and $h$ in $D$:
\[
\Omega(x,h)=\om_0(x)+\rho_1(x,h).
\]
\item[A9.]
\mylabel{assumption_rho1_bound}
$c_1\in[0,+\infty)$ such that
\[
\sup_{x\in X}\,\sup_{h\in D}\,
\frac{|\rho_1(x,h)|}{|h|} \le c_1.
\]
\end{enumerate}
Then $z$ can be written in the form
\begin{equation}
\label{eq:z_one_term}
z(h) = s_0 + \eta_1(h),
\end{equation}
where the residue term
$\eta_1\colon D\to\bR$
satisfies
\begin{equation}
\label{eq:z_one_term_error}
\sup_{h\in D} \frac{|\eta_1(h)|}{|h|}
\le \frac{c_1}{\ell}.
\end{equation}
\end{prop}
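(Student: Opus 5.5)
We define $\eta_1(h)\eqdef z(h)-s_0$, so that~\eqref{eq:z_one_term} holds by construction, and all the work goes into proving the bound~\eqref{eq:z_one_term_error}. Fix $h$ in $D$ and write $x\eqdef z(h)\in X$. By A4 and A8,
\[
0=\Omega(x,h)=\om_0(x)+\rho_1(x,h),
\qquad\text{hence}\qquad
|\om_0(x)|=|\rho_1(x,h)|\le c_1|h|
\]
by A9. Since $\om_0(s_0)=0$ by A6, this gives $|\om_0(x)-\om_0(s_0)|\le c_1|h|$.

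The key step is to turn this into a lower bound by $\ell|x-s_0|$, using the mean value theorem. If $x=s_0$, there is nothing to prove. Otherwise, the closed segment between $s_0$ and $x$ lies in $X$, because $X$ is an interval (A1). Its open interior lies in $\interior(X)$, since both endpoints belong to $X$ and $X$ is an interval. By A5, $\om_0$ is continuous on this segment and differentiable on its interior. Hence there is a point $\xi$ strictly between $s_0$ and $x$ with
\[
\om_0(x)-\om_0(s_0)=\om_0'(\xi)\,(x-s_0),
\]
and A7 yields $|\om_0'(\xi)|\ge\ell$. Therefore $\ell\,|x-s_0|\le c_1|h|$.

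Dividing by $\ell|h|$ gives $|\eta_1(h)|/|h|\le c_1/\ell$. This division is legitimate because $\ell>0$ by A7 and $h\ne0$ by A2. Taking the supremum over $h$ in $D$ gives~\eqref{eq:z_one_term_error}.

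The only delicate point is making sure the mean value theorem applies, namely that the open segment between $s_0$ and $x$ sits inside $\interior(X)$, where A7 is available. This holds even when $x$ is an endpoint of $X$: the open segment between two points of an interval always lies in its interior. Note also that only the inequality form of the mean value theorem is used, and no sign condition on $\om_0'$ is needed.
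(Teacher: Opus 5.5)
Your proof is correct and follows the paper's argument. Both set $\eta_1(h)=z(h)-s_0$, use A4, A6, A8 and A9 to get $|\om_0(z(h))-\om_0(s_0)|=|\rho_1(z(h),h)|\le c_1|h|$, and bound the left side from below by $\ell|\eta_1(h)|$ via the mean value theorem, with your checks that the open segment lies in $\interior(X)$ and that $h\ne0$ filling in details the paper leaves implicit.
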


\begin{proof}
We define $\eta_1\colon D\to\bR$ by
\[
\eta_1(h) \eqdef z(h) - s_0.
\]
For every $h$ in $D$,
using Assumptions~\ref{assumption_X}, \ref{assumption_om0},
\ref{assumption_om0der1}
and applying the mean value theorem,
we get
\[
|\om_0(z(h)) - \om_0(s_0)|
\ge \ell |z(h)-s_0|
= \ell |\eta_1(h)|.
\]
By assumptions~\ref{assumption_z} and~\ref{assumption_s0},
$\om_0(s_0)=0=\Omega(z(h),h)$.
Using these equalities
and Assumptions~\ref{assumption_Omega_expansion0} and~\ref{assumption_rho1_bound},
we can estimate $|\eta_1(h)|/|h|$ from above:
\[
\frac{|\eta_1(h)|}{|h|}
\le 
\frac{|\om_0(z(h)) - \om_0(s_0)|}{\ell|h|}
=
\frac{|\om_0(z(h)) - \Omega(z(h),h)|}{\ell|h|}
=
\frac{|\rho_1(z(h),h)|}{\ell|h|}
\le
\frac{c_1}{\ell}.
\qedhere
\]
\end{proof}

\begin{prop}[two-term approximation]
\label{prop:scheme_solve_equations_by_asymptotic_expansions2}
Suppose that
Assumptions~\ref{assumption_X}--\ref{assumption_om0der1}
from Proposition~\ref{prop:scheme_solve_equations_by_asymptotic_expansions1} are satisfied,
and instead of~\ref{assumption_Omega_expansion0} and~\ref{assumption_rho1_bound},
the following additional assumptions hold.
\begin{enumerate}
\item[A10.]
\mylabel{assumption_D_is_bounded}
$\Delta\in[0,+\infty)$ such that
$D\subseteq[-\Delta,\Delta]$.
\item[A11.]
\mylabel{assumption_om0_om1}
$\om_0,\om_1\colon X\to\bR$
are continuous functions,
$\om_0$ is twice differentiable on
$\interior(X)$,
$\om_1$ is differentiable on
$\interior(X)$.
\item[A12.]
\mylabel{assumption_Omega_expansion1}
$\rho_2\colon X\times D\to\bR$
is defined by the following identity
which holds for every $x$ in $X$ and $h$ in $D$:
\begin{equation}
\label{eq:Omega_two_term}
\Omega(x,h)=\om_0(x)+\om_1(x) h+\rho_2(x,h).
\end{equation}
\item[A13.]
\mylabel{assumption_c02_c10_c11}
$c_{0,2},c_{1,0},c_{1,1}\in[0,+\infty)$ such that
\[
\sup_{x\in\interior(X)}
|\om_0''(x)|
\le c_{0,2},
\quad
\sup_{x\in X}|\om_1(x)|
\le c_{1,0},
\quad
\sup_{x\in\interior(X)}|\om_1'(x)|
\le c_{1,1}.
\]
\item[A14.]
\mylabel{assumption_rho2_bound}
$c_2\in[0,+\infty)$ such that
\[
\sup_{x\in X}\sup_{h\in D}
\frac{|\rho_2(x,h)|}{|h|^2}
\le c_2.
\]
\end{enumerate}
Then $z$ can be written in the form
\begin{equation}
\label{eq:z_two_terms}
z(h) = s_0 + s_1 h + \eta_2(h),
\end{equation}
where the coefficient $s_1$ is defined by
\begin{equation}
\label{eq:s1_def}
s_1 \eqdef -\frac{\om_1(s_0)}{\om_0'(s_0)},
\end{equation}
and the residue term
$\eta_2\colon D\to\bR$
satisfies the following upper bound:
\begin{equation}
\label{eq:eta2_bound}
\sup_{h\in D}
\frac{|\eta_2(h)|}{|h|^2}
\le
\frac{c_2}{\ell}
+\frac{c_{1,1} c_1}{\ell^2}
+\frac{c_{0,2} c_1^2}{2\ell^3},
\qquad
\text{where}
\qquad
c_1 \eqdef c_{1,0} + c_2 \Delta.
\end{equation}
\end{prop}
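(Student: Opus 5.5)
Set $\eta_2(h)\eqdef z(h)-s_0-s_1h$ and bound it in two stages. First, observe that Assumptions~\ref{assumption_Omega_expansion0}--\ref{assumption_rho1_bound} of Proposition~\ref{prop:scheme_solve_equations_by_asymptotic_expansions1} hold with $\rho_1(x,h)\eqdef\om_1(x)h+\rho_2(x,h)$ and $c_1\eqdef c_{1,0}+c_2\Delta$. Indeed, by A13, A14, and A10,
\[
\frac{|\rho_1(x,h)|}{|h|}\le|\om_1(x)|+\frac{|\rho_2(x,h)|}{|h|}\le c_{1,0}+c_2|h|\le c_{1,0}+c_2\Delta .
\]
Hence Proposition~\ref{prop:scheme_solve_equations_by_asymptotic_expansions1} yields $z(h)=s_0+\eta_1(h)$ with $|\eta_1(h)|\le (c_1/\ell)|h|$. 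This first-order bound is the input for the second stage.

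Second, expand $0=\Omega(z(h),h)$ around $s_0$. By Taylor's theorem with Lagrange remainder (valid since $z(h),s_0\in X$ and $\om_0$ is twice differentiable in the interior), there is a point $\xi$ between $s_0$ and $z(h)$ with
\[
\om_0(z(h))=\om_0'(s_0)\eta_1(h)+\tfrac12\om_0''(\xi)\eta_1(h)^2,
\]
using $\om_0(s_0)=0$. If $z(h)=s_0$ the remainder term vanishes and nothing needs checking; otherwise $\xi$ lies strictly between the two points and is therefore interior. Likewise, the mean value theorem gives $\om_1(z(h))=\om_1(s_0)+\om_1'(\zeta)\eta_1(h)$. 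Substituting both into $0=\om_0(z(h))+\om_1(z(h))h+\rho_2(z(h),h)$ and using $\om_0'(s_0)s_1=-\om_1(s_0)$, we get
\[
\om_0'(s_0)\bigl(\eta_1(h)-s_1h\bigr)=-\tfrac12\om_0''(\xi)\eta_1(h)^2-\om_1'(\zeta)\eta_1(h)h-\rho_2(z(h),h).
\]
Since $\eta_2(h)=\eta_1(h)-s_1h$, dividing by $|\om_0'(s_0)|\ge\ell$ gives
\[
|\eta_2(h)|\le\frac{1}{\ell}\Bigl(\frac{c_{0,2}}{2}\frac{c_1^2}{\ell^2}h^2+c_{1,1}\frac{c_1}{\ell}h^2+c_2h^2\Bigr),
\]
which is exactly \eqref{eq:eta2_bound}.

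The step needing care is the use of A7 at $s_0$ itself. A7 gives $|\om_0'(x)|\ge\ell$ on $\interior(X)$, and $s_0\in\interior(X)$ by A6, so $|\om_0'(s_0)|\ge\ell$ and $s_1$ is well defined. We must also confirm that all intermediate points $\xi,\zeta$ at which $\om_0''$ and $\om_1'$ are evaluated lie in $\interior(X)$, where the bounds of A13 apply. This holds because they lie strictly between $s_0$ and $z(h)$, both in the interval $X$. The degenerate case $z(h)=s_0$ is handled separately: there $\eta_1(h)=0$, so the identity above directly gives $|\eta_2(h)|=|s_1h|$, with $\om_0'(s_0)s_1h=-\rho_2(s_0,h)$, hence $|\eta_2(h)|\le c_2h^2/\ell$. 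Everything else is routine bookkeeping.
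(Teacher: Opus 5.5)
Your proof is correct and follows the paper's route. You apply Proposition~\ref{prop:scheme_solve_equations_by_asymptotic_expansions1} with $\rho_1(x,h)=\om_1(x)h+\rho_2(x,h)$ and $c_1=c_{1,0}+c_2\Delta$, expand $\om_0$ to second order and $\om_1$ to first order around $s_0$, and solve for $\eta_2=\eta_1-s_1h$; your Lagrange remainders are the paper's $\xi_{0,2},\xi_{1,1}$. One harmless sign slip: in the degenerate case $z(h)=s_0$ you should get $\om_0'(s_0)s_1h=+\rho_2(s_0,h)$, which does not affect the absolute-value bound, and that case is already covered by your general identity with $\xi=\zeta=s_0$.
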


\begin{proof}
First, we are going to apply Proposition~\ref{prop:scheme_solve_equations_by_asymptotic_expansions1}.
We define $\rho_1\colon X\times D\to\bR$ by
\[
\rho_1(x,h) \eqdef \om_1(x)h+\rho_2(x,h).
\]
By~Assumptions~\ref{assumption_D_is_bounded},
\ref{assumption_c02_c10_c11},
and~\ref{assumption_rho2_bound},
\[
\sup_{x\in X}\sup_{h\in D}
\frac{|\rho_1(x,h)|}{|h|}
\le
\sup_{x\in X}\sup_{h\in D}
\left(|\om_1(x)| + \frac{|\rho_2(x,h)|}{|h|^2}\,|h|\right)
\le
c_{1,0} + c_2 \Delta.
\]
So, Assumptions~\ref{assumption_Omega_expansion0}
and~\ref{assumption_rho1_bound}
from Proposition~\ref{prop:scheme_solve_equations_by_asymptotic_expansions1} are satisfied with
$c_1 \eqdef c_{1,0} + c_2 \Delta$.
We use Proposition~\ref{prop:scheme_solve_equations_by_asymptotic_expansions1}
and represent $z$ in the form
\eqref{eq:z_one_term}.
Moreover, we know that $\eta_1$ satisfies the upper bound~\eqref{eq:z_one_term_error}.

Next, we expand $\Omega$ by \eqref{eq:Omega_two_term}
and rewrite the equation
$\Omega(z(h),h)=0$ as
\begin{equation}
\label{eq:Omega_equation_transformed_1}
\om_0(s_0 + \eta_1(h))
+ \om_1(s_0 + \eta_1(h)) h
+ \rho_2(z(h),h) = 0.
\end{equation}
Our next step is to expand $\om_0$ and $\om_1$ around the point $s_0$.
We set
\begin{align*}
\xi_{0,2}(h)
&\eqdef
\om_0(s_0+\eta_1(h))-\om_0(s_0)-\om_0'(s_0)\eta_1(h),
\\[1ex]
\xi_{1,1}(h)
&\eqdef
\om_1(s_0+\eta_1(h))-\om_1(s_0).
\end{align*}
It follows from these definitions that
\begin{align}
\label{eq:expansion_om02}
\om_0(s_0+\eta_1(h))
&=
\om_0(s_0)
+\om_0'(s_0)\eta_1(h)
+\xi_{0,2}(h),
\\[1ex]
\label{eq:expansion_om11}
\om_1(s_0+\eta_1(h))
&=
\om_1(s_0)+\xi_{1,1}(h).
\end{align}
To estimate the residue terms $\xi_{0,2}$ and $\xi_{1,1}$,
we apply the Taylor theorem to $\om_0$
and the mean value theorem to $\om_1$.
With the help of
Assumptions~\ref{assumption_om0_om1} and~\ref{assumption_c02_c10_c11},
and the estimate from~\eqref{eq:z_one_term_error}, we get
\[
|\xi_{0,2}(h)|
\le \frac{c_{0,2} |\eta_1(h)|^2}{2}
\le \frac{c_{0,2} c_1^2 |h|^2}{2\ell^2},
\qquad
|\xi_{1,1}(h)|
\le c_{1,1} |\eta_1(h)|
\le \frac{c_{1,1} c_1 |h|}{\ell}.
\]
Substituting~\eqref{eq:expansion_om02} and~\eqref{eq:expansion_om11} into~\eqref{eq:Omega_equation_transformed_1}, we get the following equation:
\[
\om_0(s_0)+\om_0'(s_0)\eta_1(h)+\xi_{0,2}(h)
+ \om_1(s_0) h + \xi_{1,1}(h) h
+ \rho_2(z(h),h) = 0.
\]
Due to Assumption~\ref{assumption_s0},
we can omit the zero term $\om_0(s_0)$.
Next, we solve the equation for
$\eta_1(h)$:
\[
\eta_1(h)
=
-\frac{\om_1(s_0)}{\om_0'(s_0)} h
+ \eta_2(h),
\]
where
\[
\eta_2(h)
\eqdef
-
\frac{\rho_2(x,h)+\xi_{0,2}(h)+\xi_{1,1}(h)h}{\om_0'(s_0)}.
\]
Using the above upper bounds
for $\rho_2$, $\xi_1$, and $\xi_2$,
we finally estimate $\eta_2$:
\[
\sup_{h\in D}\frac{|\eta_2(h)|}{|h|^2}
\le
\frac{c_2 + \frac{c_{1,1} c_1}{\ell}
+ \frac{c_{0,2} c_1^2}{2\ell^2}}{\ell}
=
\frac{c_2}{\ell}
+\frac{c_{1,1} c_1}{\ell^2}
+\frac{c_{0,2} c_1^2}{2\ell^3}.
\qedhere
\]
\end{proof}

\begin{prop}[three-term approximation]
\label{prop:scheme_solve_equations_by_asymptotic_expansions3}
Suppose that Assumptions \ref{assumption_X}--\ref{assumption_om0der1}
from Proposition~\ref{prop:scheme_solve_equations_by_asymptotic_expansions1}
are satisfied,
and the following additional assumptions hold.
\begin{enumerate}
\item[A10.]
$\Delta\in[0,+\infty)$ such that $D\subseteq[-\Delta,\Delta]$.
\item[A15.]
\mylabel{assumption_om0_om1_om2}
$\om_0,\om_1,\om_2\colon X\to\bR$
are continuous functions,
$\om_0$ is three times differentiable on $\interior(X)$,
$\om_1$ is twice differentiable on $\interior(X)$,
and $\om_2$ is differentiable on
$\interior(X)$.
\item[A16.]
$\rho_3\colon X\times D\to\bR$
is defined by the following identity which holds for every $x$ in $X$
and $h$ in $D$:
\begin{equation}
\mylabel{eq:Omega_three_terms}
\Omega(x,h)
=
\om_0(x)
+\om_1(x) h
+\om_2(x) h^2
+\rho_3(x,h).
\end{equation}
\item[A17.]
\mylabel{assumption_c03_c12_c20_c21}
Assumption~\ref{assumption_c02_c10_c11}
from Proposition~\ref{prop:scheme_solve_equations_by_asymptotic_expansions2} is satisfied
and
$c_{0,3},c_{1,2},c_{2,0},c_{2,1}\in[0,+\infty)$ such that
\[
\sup_{x\in\interior(X)}
|\om_0'''(x)|
\le c_{0,3},
\quad
\sup_{x\in \interior(X)}|\om_1''(x)|
\le c_{1,2},
\]
\[
\sup_{x\in X}|\om_2(x)|
\le c_{2,0},
\quad
\sup_{x\in\interior(X)}|\om_2'(x)|
\le c_{2,1}.
\]
\item[A18.]
\mylabel{assumption_rho3_bound}
$c_3\in[0,+\infty)$ such that
\[
\sup_{x\in X}\sup_{h\in D}
\frac{|\rho_3(x,h)|}{|h|^3}
\le c_3.
\]
\end{enumerate}
Then $z$ can be written in the form
\begin{equation}
\label{eq:z_three_terms}
z(h)
= s_0 + s_1 h + s_2 h^2
+ \eta_3(h),
\end{equation}
where
\begin{equation}
\label{eq:s1_s2_def}
s_1 \eqdef -\frac{\om_1(s_0)}{\om_0'(s_0)},
\qquad
s_2 \eqdef -\frac{1}{\om_0'(s_0)}
\left(\frac{\om_0''(s_0)}{2}s_1^2
+\om_1'(s_0)s_1
+\om_2(s_0)
\right),
\end{equation}
and
\begin{equation}
\label{eq:eta3_bound}
\sup_{h\in D}
\frac{|\eta_3(h)|}{|h|^3}
\le
\frac{\frac{c_{0,2} c_{1,0} k}{\ell}
+ \frac{1}{2}c_{0,2} k^2\Delta 
+\frac{c_{0,3}c_1^3}{6\ell^3}
+c_{1,1}k
+\frac{c_{1,2}c_1^2}{2\ell^2}
+\frac{c_{2,1}c_1}{\ell}
+c_3}{\ell},
\end{equation}
where
\begin{equation}
\label{eq:additional_constants_definition}
c_2\eqdef c_{2,0} + c_3 \Delta,\qquad
c_1 \eqdef c_{1,0}+c_2\Delta,\qquad
k \eqdef \frac{c_2}{\ell}
+\frac{c_{1,1} c_1}{\ell^2}
+\frac{c_{0,2} c_1^2}{2\ell^3}.
\end{equation}
\end{prop}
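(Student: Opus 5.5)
The plan is to follow the proof of Proposition~\ref{prop:scheme_solve_equations_by_asymptotic_expansions2}, one order higher. The lower-order results are bootstrapped first.

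We define $\rho_2(x,h)\eqdef\om_2(x)h^2+\rho_3(x,h)$. By A10 and A18, $|\rho_2(x,h)|/|h|^2\le c_{2,0}+c_3\Delta=c_2$. Hence all assumptions of Proposition~\ref{prop:scheme_solve_equations_by_asymptotic_expansions2} hold with this $c_2$ and with $c_1=c_{1,0}+c_2\Delta$. That proposition then gives
\[
z(h)=s_0+s_1h+\eta_2(h),\qquad |\eta_2(h)|\le k|h|^2,
\]
with $k$ exactly as in~\eqref{eq:additional_constants_definition}. It also gives $|\eta_1(h)|\le c_1|h|/\ell$, where $\eta_1(h)=s_1h+\eta_2(h)$.

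Next we substitute $z(h)=s_0+\eta_1(h)$ into $\Omega(z(h),h)=0$ using the three-term expansion A16. We Taylor-expand each coefficient around $s_0$: $\om_0$ to second order with a cubic remainder bounded by $c_{0,3}|\eta_1|^3/6$, $\om_1$ to first order with remainder bounded by $c_{1,2}|\eta_1|^2/2$, and $\om_2$ to zeroth order with remainder bounded by $c_{2,1}|\eta_1|$. By the $\eta_1$ bound these remainders are at most $\frac{c_{0,3}c_1^3}{6\ell^3}|h|^3$, $\frac{c_{1,2}c_1^2}{2\ell^2}|h|^3$ (after multiplying by $h$), and $\frac{c_{2,1}c_1}{\ell}|h|^3$ (after multiplying by $h^2$). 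Using $\om_0(s_0)=0$, the equation becomes
\[
\om_0'(s_0)\eta_1+\tfrac12\om_0''(s_0)\eta_1^2+\om_1(s_0)h+\om_1'(s_0)\eta_1h+\om_2(s_0)h^2+O(|h|^3)=0.
\]

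Now we insert $\eta_1=s_1h+\eta_2$. The $h$-terms cancel by the definition of $s_1$. In $\eta_1^2=s_1^2h^2+2s_1h\eta_2+\eta_2^2$, the cross term is bounded by $2|s_1|k|h|^3$, and $|s_1|\le c_{1,0}/\ell$. The term $\eta_2^2$ is bounded by $k^2|h|^4\le k^2\Delta|h|^3$. Multiplying by $\frac12 c_{0,2}$, these contribute $\frac{c_{0,2}c_{1,0}k}{\ell}$ and $\frac12c_{0,2}k^2\Delta$. Likewise $\om_1'(s_0)\eta_2h$ contributes $c_{1,1}k|h|^3$.

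Collecting the $h^2$-terms gives $\om_0'(s_0)\bigl(\eta_2-s_2h^2\bigr)$ with $s_2$ as in~\eqref{eq:s1_s2_def}. We set $\eta_3\eqdef\eta_2-s_2h^2$ and solve for it. Dividing by $|\om_0'(s_0)|\ge\ell$ (A7, since $s_0\in\interior(X)$) yields exactly the bound~\eqref{eq:eta3_bound}.

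The main obstacle is bookkeeping rather than ideas. We must check that each remainder is assigned the correct constant, in particular the factor $2$ in the cross term against the $\frac12$ from Taylor's formula. We must also justify the Taylor remainder estimates at $s_0+\eta_1(h)$, which lies in $X$, with derivative bounds taken on $\interior(X)$; this is the same convexity-of-interval argument as in the earlier propositions.
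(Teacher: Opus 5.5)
Your proposal is correct and follows the paper's proof essentially step by step. Like the paper, you set $\rho_2=\om_2h^2+\rho_3$, invoke the two-term proposition to get $|\eta_2(h)|\le k|h|^2$ and $|\eta_1(h)|\le c_1|h|/\ell$, Taylor-expand $\om_0,\om_1,\om_2$ about $s_0$ to orders two, one and zero, and regroup so the $h$-terms cancel by the definition of $s_1$ and the $h^2$-terms combine through $s_2$; your term-by-term bounds, including the cross term $\om_0''(s_0)s_1\eta_2h$ giving $c_{0,2}c_{1,0}k/\ell$ and the term $\frac12\om_0''(s_0)\eta_2^2$ giving $\frac12c_{0,2}k^2\Delta$, reproduce the stated bound exactly.
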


\begin{proof}
First, we are going to apply Proposition~\ref{prop:scheme_solve_equations_by_asymptotic_expansions2}
with
\[
\rho_2(x,h)
\eqdef \om_2(x)h^2+\rho_3(x,h).
\]
Using Assumptions~\ref{assumption_D_is_bounded}
and \ref{assumption_om0_om1_om2}--\ref{assumption_rho3_bound},
we get the following estimate for $\rho_2$:
\[
\sup_{x\in X}\sup_{h\in D}
\frac{|\rho_2(x,h)|}{|h|^2}
\le
\sup_{x\in X}\sup_{h\in D}
\left(|\om_2(x)| + \frac{|\rho_3(x,h)|}{|h|^3}\,|h|\right)
\le c_2,
\quad
\text{where}
\quad
c_2 \eqdef c_{2,0} + c_3 \Delta.
\]
All assumptions from Proposition~\ref{prop:scheme_solve_equations_by_asymptotic_expansions2} are satisfied, 
and we represent $z$ in the form
\eqref{eq:z_two_terms}.
Moreover, by~\eqref{eq:eta2_bound}
we know that
\begin{equation}
\label{eq:eta2_short_bound}
\sup_{h\in D}
\frac{|\eta_2(h)|}{|h|^2} \le k.
\end{equation}
Using the smoothness of
the functions
$\om_0$, $\om_1$, and $\om_2$,
we expand them around the point $s_0$:
\begin{align*}
\om_2(z(h))
&=
\om_2(s_0 + s_1 h + \eta_2(h))
=\om_2(s_0)
+\xi_{2,1}(h),
\\[1ex]
\om_1(z(h))
&=
\om_1(s_0 + s_1 h + \eta_2(h))
=\om_1(s_0)
+\om_1'(s_0) (s_1 h + \eta_2(h))
+\xi_{1,2}(h),
\\[1ex]
\om_0(z(h))
&=
\om_0(s_0)
+\om_0'(s_0) (s_1 h + \eta_2(h))
+\frac{\om_0''(s_0)}{2}(s_1 h+\eta_2(h))^2
+\xi_{0,3}(h).
\end{align*}
The residue terms $\xi_{2,1}$, $\xi_{1,2}$, and $\xi_{0,3}$ are defined by these identities.
Notice that $s_1 h + \eta_2(h)$ is just $\eta_1(h)$ from the proof of Proposition~\ref{prop:scheme_solve_equations_by_asymptotic_expansions2},
and it is absolutely bounded by $c_1/\ell$.
Now, we estimate $\xi_{2,1}$, $\xi_{1,2}$, and $\xi_{0,3}$ by using Taylor's theorem and Assumption~\ref{assumption_c03_c12_c20_c21}:
\begin{equation}
\label{eq:xi21_xi12_x03_bound}
\frac{|\xi_{2,1}(h)|}{|h|}
\le \frac{c_{2,1} c_1}{\ell},
\qquad
\frac{|\xi_{1,2}(h)|}{|h|^2}
\le \frac{c_{1,2} c_1^2}{2\ell^2},
\qquad
\frac{|\xi_{0,3}(h)|}{|h|^3}
\le \frac{c_{0,3}c_1^3}{6\ell^3}.
\end{equation}
Next, we expand $\Omega$ by \eqref{eq:Omega_three_terms},
substitute the expansions
of $\om_0(z(h))$, $\om_1(z(h))$, $\om_2(z(h))$ written above,
and transform the equation
$\Omega(z(h),h)=0$ to the following form:
\begin{align*}
&\om_0(s_0)
+\om_0'(s_0) (s_1 h + \eta_2(h))
+\frac{\om_0''(s_0)}{2}(s_1 h+\eta_2(h))^2
+\xi_{0,3}(h)
\\
&
\quad
+\Bigl(\om_1(s_0)
+\om_1'(s_0) (s_1 h + \eta_2(h))
+\xi_{1,2}(h)\Bigr)h
+
\Bigl(\om_2(s_0)+\xi_{2,1}(h)\Bigr)h^2
+\rho_3(x,h)
=0.
\end{align*}
In this equation,
we regroup the summands 
in the following way:
\begin{align*}
&\Bigl(\om_0(s_0)
+(\om_0'(s_0)s_1
+ \om_1(s_0))h
\Bigr)
+\om_0'(s_0)\eta_2(h)
+\left(
\frac{\om_0''(s_0)}{2} s_1^2 
+\om_1'(s_0)s_1
+\om_2(s_0)
\right)h^2
\\[1ex]
&\qquad
+\om_0''(s_0)s_1 \eta_2(h) h
+\frac{\om_0''(s_0)}{2}\,\eta_2(h)^2
+\xi_{0,3}(h)
\\[1ex]
&\qquad
+\om_1'(s_0)\eta_2(h)h
+\xi_{1,2}(h)h
+\xi_{2,1}(h)h^2
+\rho_3(x,h)
=0.
\end{align*}
The first group of terms is zero,
by Assumption~\ref{assumption_s0}
and the definition of $s_1$.
Now, we keep the term $\om_0'(s_0)\eta_2(h)$
in the left-hand side of the equation,
pass the other terms to the right-hand side and divide by $\om_0'(s_0)$.
Taking into account the definition \eqref{eq:s1_s2_def} of $s_2$, we get
\[
\eta_2(h)=s_2 h^2 + \eta_3(h),
\]
where we define $\eta_3\colon D\to\bR$ as
\begin{align*}
\eta_3(h)
&
\eqdef
-\frac{\om_0''(s_0)s_1 \eta_2(h) h
+\frac{\om_0''(s_0)}{2}\,\eta_2(h)^2
+\xi_{0,3}(h)}{\om_0'(s_0)}
\\[1ex]
&\qquad\qquad
-\frac{\om_1'(s_0)\eta_2(h)h
+\xi_{1,2}(h)h
+\xi_{2,1}(h)h^2
+\rho_3(x,h)}{\om_0'(s_0)}.
\end{align*}
Thereby, we have written $z(h)$ in the form~\eqref{eq:z_three_terms},
and we are left to get an upper bound for $|\eta_3|$.
We will apply Assumptions~\ref{assumption_om0_om1_om2}
and~\ref{assumption_rho3_bound},
the upper bounds~\eqref{eq:xi21_xi12_x03_bound}
and~\eqref{eq:eta2_short_bound},
and the following estimate for $s_1$:
\[
|s_1|
= \frac{|\om_1(s_0)|}{|\om_0'(s_0)|}
\le \frac{c_{1,0}}{\ell}.
\]
Then, we obtain
\begin{align*}
\frac{|\eta_3(h)|}{|h|^3}
&\le
\frac{\frac{c_{0,2} c_{1,0} k}{\ell}
+ \frac{1}{2}c_{0,2} k^2 \Delta 
+\frac{c_{0,3}c_1^3}{6\ell^3}
+c_{1,1}k
+\frac{c_{1,2}c_1^2}{2\ell^2}
+\frac{c_{2,1}c_1}{\ell}
+c_3}{\ell}.
\qedhere
\end{align*}
\end{proof}

The following particular case
is similar to the asymptotic expansions studied in~\cite{BogoyaBottcherGrudskyMaximenko2015,BarreraBoettcherGrudskyMaximenko2018,BoettcherGrudskyMaksimenko2010},
where $h$ was of the form
$h=1/n$, $h=1/(n+1)$, or $h=1/(n+2)$.
We will apply this particular case in Section~\ref{sec:asympt_expansion_not_too_close_to_the_origin}.

\begin{cor}
\label{cor:scheme_particular_case3}
Under the assumptions of Proposition~\ref{prop:scheme_solve_equations_by_asymptotic_expansions3},
if $\om_0$ is a first degree polynomial of the form $\om_0(x)=x-s_0$,
and $\om_2,\rho_3$ are the zero functions,
then
\[
z(h)
= s_0
-\om_1(s_0)h
+\om_1(s_0)\om_1'(s_0)h^2
+\eta_3(h),
\]
where the residue term admits the following upper bound:
\[
\sup_{h\in D}\frac{|\eta_3(h)|}{|h|^3}
\le
c_{1,0} c_{1,1}^2
+\frac{c_{1,0}^2 c_{1,2}}{2}.
\]
\end{cor}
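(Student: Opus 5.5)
Corollary~\ref{cor:scheme_particular_case3} is a direct specialization of Proposition~\ref{prop:scheme_solve_equations_by_asymptotic_expansions3}. First compute the coefficients. With $\om_0(x)=x-s_0$ we have $\om_0'\equiv1$, $\om_0''\equiv0$, $\om_0'''\equiv0$, and $\om_0(s_0)=0$. Assumption~A7 then holds with $\ell=1$. Substituting into~\eqref{eq:s1_s2_def} gives $s_1=-\om_1(s_0)$. Since $\om_2\equiv0$ and $\om_0''\equiv0$, we get
\[
s_2=-\om_1'(s_0)s_1=\om_1(s_0)\om_1'(s_0).
\]
This is exactly the claimed expansion.

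Next, specialize the error bound~\eqref{eq:eta3_bound}. We may take $c_{0,2}=c_{0,3}=0$, $c_{2,0}=c_{2,1}=0$ and $c_3=0$, the latter because $\rho_3\equiv0$. Then~\eqref{eq:additional_constants_definition} gives $c_2=0$ and $c_1=c_{1,0}$. It also gives $k=c_{1,1}c_1/\ell^2=c_{1,0}c_{1,1}$. Every term in the numerator of~\eqref{eq:eta3_bound} carrying a factor $c_{0,2}$, $c_{0,3}$, $c_{2,1}$ or $c_3$ vanishes. What remains is
\[
c_{1,1}k+\frac{c_{1,2}c_1^2}{2}=c_{1,0}c_{1,1}^2+\frac{c_{1,0}^2c_{1,2}}{2},
\]
and dividing by $\ell=1$ leaves it unchanged. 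This is the stated bound.

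The only point needing care is that all hypotheses of Proposition~\ref{prop:scheme_solve_equations_by_asymptotic_expansions3} remain valid with these zero constants. This is immediate: the suprema of identically zero functions are zero, and the smoothness requirements on $\om_0$ and $\om_2$ are trivially satisfied. No step is a genuine obstacle; the main risk is bookkeeping in~\eqref{eq:eta3_bound}.
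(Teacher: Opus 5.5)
Your proposal is correct and matches the paper's proof: the paper also specializes Proposition~\ref{prop:scheme_solve_equations_by_asymptotic_expansions3} with $\ell=1$, $c_2=0$, $c_1=c_{1,0}$ and $k=c_{1,0}c_{1,1}$. Your bookkeeping of $s_1$, $s_2$ and of the surviving terms $c_{1,1}k+\frac{c_{1,2}c_1^2}{2}$ in~\eqref{eq:eta3_bound} is accurate.
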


\begin{proof}
In the notation of Proposition~\ref{prop:scheme_solve_equations_by_asymptotic_expansions3},
we have $\ell=1$,
$c_2=0$, $c_1=c_{1,0}$, $k=c_{1,0}c_{1,1}$.
\end{proof}

\section{Asymptotic expansion not too close to the origin}
\label{sec:asympt_expansion_not_too_close_to_the_origin}

In this section,
we prove Theorem~\ref{thm:lambda_asympt_not_too_close_to_the_origin}.
First, we give an elementary general scheme to estimate the distance between the fixed points of two functions, supposing that at least one of them is contractive.

\begin{prop}
\label{prop:two_contractive_functions}
Let $(X,d)$ be a complete metric space,
$f_1\colon X\to X$
and $t\in X$ such that $f_1(t)=t$.
Furthermore,
let
$f_2\colon X\to X$
be a contractive function,
$\ka$ be a Lipschitz coefficient of $f_2$,
and $u\in X$ such that $f_2(u)=u$.
Then,
\[
d(t,u) \le \frac{d(f_1(t),f_2(t))}{1-\ka}.
\]
\end{prop}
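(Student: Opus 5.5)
The plan is a direct application of the triangle inequality combined with the contraction property of $f_2$. The key observation is that $t$ is a fixed point of $f_1$ and $u$ is a fixed point of $f_2$. We can therefore write $t=f_1(t)$ and $u=f_2(u)$ and insert the intermediate point $f_2(t)$:
\[
d(t,u)=d\bigl(f_1(t),f_2(u)\bigr)\le d\bigl(f_1(t),f_2(t)\bigr)+d\bigl(f_2(t),f_2(u)\bigr).
\]

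The second term is controlled by the Lipschitz coefficient of $f_2$: $d(f_2(t),f_2(u))\le\ka\,d(t,u)$. Substituting gives
\[
(1-\ka)\,d(t,u)\le d\bigl(f_1(t),f_2(t)\bigr).
\]
Since $f_2$ is contractive, we have $\ka<1$, so $1-\ka>0$ and we may divide by it to obtain the claimed bound.

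There is no real obstacle here. The only point worth checking is that the hypotheses suffice. Contractivity of $f_2$ is used only through $\ka<1$. Completeness of $X$ and uniqueness of fixed points are not needed for the inequality itself, since $u$ is given as a fixed point of $f_2$ and $t$ as a fixed point of $f_1$; completeness merely guarantees the existence of $u$ through Banach's theorem. In the later application, $f_2$ will be $f_{n,j}$ on $\left[\frac{1}{3(n+1)},\frac13\right]$ with $\ka<\frac{11}{18}$ (Proposition~\ref{prop:f_n_k_contractive}). The function $f_1$ will be the simplified map without the term $\expterm_{n,j}$, and then $d(f_1(t),f_2(t))=|\expterm_{n,j}(t)|$ will be exponentially small for $j\ge\lfloor\log n\rfloor$.
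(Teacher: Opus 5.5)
Your proof is correct and is exactly the paper's argument: insert $f_2(t)$, apply the triangle inequality and the Lipschitz bound $d(f_2(t),f_2(u))\le\ka\,d(t,u)$, and divide by $1-\ka>0$. One small note on your closing remark: in Proposition~\ref{prop:al_minus_widetilde_al} the paper assigns the roles the other way around, with $f_1=f_{n,j}$ and $t=\al_{n,j}$, and with $f_2=\widetilde{f}_{n,j}$ and $\ka\le 1/131$. The factor is then $\frac{131}{130}<\frac{5}{4}$, and the quantity to bound is $|\expterm_{n,j}(\al_{n,j})|<\frac{1}{5\left(n+\frac{3}{2}\right)^3}$. Your assignment would also work, but with the larger factor $\frac{18}{7}$, so it would not reproduce the paper's constant $\frac{1}{4}$.
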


\begin{proof}
Apply the triangle inequality
and the Lipschitz property of $f_2$:
\[
d(t,u)=d(f_1(t),f_2(u))
\le d(f_1(t),f_2(t))+d(f_2(t),f_2(u))
\le d(f_1(t),f_2(t))+\ka\,d(t,u).
\]
Passing the term $\ka\,d(t,u)$ to the left-hand side and dividing by $1-\ka$ we obtain the result.
\end{proof}

Beside the exact equation $\al=f_{n,j}(\al)$, where $f_{n,j}$ is defined by
\[
f_{n,j}(\al)
=
\si_{n,j}
-\frac{\tht(\al)}{n+\frac{3}{2}}
+\expterm_{n,j}(\al),
\]
we consider the ``approximate equation''
$\al=\widetilde{f}_{n,j}(\al)$, where
\begin{equation}
\label{eq:fapprox_def}
\widetilde{f}_{n,j}(\al)
\eqdef
\si_{n,j}
-\frac{\tht(\al)}{n+\frac{3}{2}}.
\end{equation}
We are going to show that $\widetilde{f}_{n,j}$
has a unique fixed point
$\widetilde{\al}_{n,j}$ on $[0,1/3]$.
Moreover,
if $j\ge\log(n)$,
then $\widetilde{\al}_{n,j}$
is close to $\al_{n,j}$.

\begin{prop}
\label{prop:gnj_is_contraction}
Let $n\ge 3$ and $j\in\{1,\ldots,m\}$.
Then, $\widetilde{f}_{n,j}$ is a contraction
on $[0,1/3]$.
Moreover, $\widetilde{f}_{n,j}$ is a contraction
on $\left[\frac{1}{3\left(n+\frac{3}{2}\right)},\frac{1}{3}\right]$.
\end{prop}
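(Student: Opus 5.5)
To prove Proposition~\ref{prop:gnj_is_contraction}, I would verify the two defining properties of a contraction on each interval: $\widetilde{f}_{n,j}$ maps the interval into itself, and it has a Lipschitz coefficient strictly less than $1$. Both facts follow from the properties of $\tht$ collected in Proposition~\ref{prop:theta}, exactly as in part~1 of the proof of Proposition~\ref{prop:f_n_k_contractive}, but without the troublesome term $\expterm_{n,j}$.

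\emph{Lipschitz bound.} By Proposition~\ref{prop:theta}, $\tht$ is smooth on $[0,1/3]$ and $\max|\tht'|=\tht'(0)=1/2$. Hence for every $\al$ in $[0,1/3]$,
\[
|\widetilde{f}_{n,j}'(\al)|=\frac{|\tht'(\al)|}{n+\frac{3}{2}}\le\frac{1/2}{n+\frac{3}{2}}\le\frac{1}{9},
\]
using $n\ge3$. The mean value theorem then gives a Lipschitz coefficient at most $1/9$ on $[0,1/3]$, and therefore on any subinterval.

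\emph{Mapping into $[0,1/3]$.} By Proposition~\ref{prop:theta}, $0\le\tht(\al)<\frac{1}{10\pi}$, so
\[
\frac{j-\frac12+\frac r3-\frac{1}{10\pi}}{n+\frac32}\le\widetilde{f}_{n,j}(\al)\le\si_{n,j}.
\]
For the upper bound, $j\le m$ gives $\si_{n,j}\le\frac{m-\frac12+\frac r3}{n+\frac32}<\frac{m+\frac r3}{n}=\frac13$. For the lower bound, $j\ge1$ makes the numerator at least $\frac12-\frac{1}{10\pi}>\frac13$. This gives $\widetilde{f}_{n,j}(\al)>\frac{1}{3(n+\frac32)}>0$. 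The lower bound therefore places the values inside the smaller interval $\bigl[\frac{1}{3(n+\frac32)},\frac13\bigr]$ as well, which establishes both claims at once.

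\emph{Conclusion.} Since $[0,1/3]$ and its closed subinterval are complete metric spaces, $\widetilde{f}_{n,j}$ is a contraction on each, with coefficient at most $1/9$. The Banach fixed point theorem then yields the unique fixed point $\widetilde{\al}_{n,j}$. The only step requiring any care is the numerical check $\frac12-\frac{1}{10\pi}>\frac13$, which holds comfortably.
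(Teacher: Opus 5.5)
Your proof is correct and follows the same route as the paper. You use the derivative bound $|\widetilde{f}_{n,j}'(\al)|\le\frac{1}{2(n+\frac{3}{2})}=\frac{1}{2n+3}$, which comes from $\max|\tht'|=\tht'(0)=\frac{1}{2}$, together with the sandwich $\frac{1}{3(n+\frac{3}{2})}<\frac{\frac{1}{2}-\frac{1}{10\pi}}{n+\frac{3}{2}}\le\widetilde{f}_{n,j}(\al)\le\si_{n,j}<\frac{1}{3}$, which handles both intervals at once; your non-strict $|\tht'|\le\frac{1}{2}$ is the accurate form of the paper's strict inequality, since equality holds at $\al=0$.
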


\begin{proof}
First, we verify that $\widetilde{f}_{n,j}(\al)$ belongs to $[0,1/3]$ for every $\al$ in $[0,1/3]$:
\[
\frac{1}{3\left(n+\frac{3}{2}\right)}
< \frac{1-\frac{1}{10\pi}-\frac{1}{2}}{n+\frac{3}{2}}
\le \widetilde{f}_{n,j}(\al)
< \frac{m+\frac{r}{3}}{n+\frac{3}{2}}
< \frac{1}{3}.
\]
Furthermore,
for all $\al$ in $[0,1/3]$
we have $|\tht'(\al)|<1/2$ and
\[
|\widetilde{f}_{n,j}'(\al)|
\le\frac{1}{2\left(n+\frac{3}{2}\right)}
=\frac{1}{2n+3}.
\]
Thus, $\widetilde{f}_{n,j}$ is Lipschitz continuous with coefficient $\le 1/(2n+3)$.
\end{proof}

For every $j$ in $\{1,\ldots,m\}$,
we denote the fixed point of
$\widetilde{f}_{n,j}$
by $\widetilde{\al}_{n,j}$.

\begin{lem}
\label{lem:q_power_n_upper_bound_far_from_zero}
If $n\ge 32$
and $\al\in[0,1/3]$ such that
\[
\al
\ge 
\frac{\log n}{2n},
\]
then
\[
q(\al)^{n+1}
< \frac{1}{\left(n+\frac{3}{2}\right)^2}.
\]
\end{lem}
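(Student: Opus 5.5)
The plan is to reduce to the single point $\al_0\eqdef\frac{\log n}{2n}$ and then apply Lemma~\ref{lem:log_q_denom_lower_bound}. By Proposition~\ref{prop:p_q_properties}, $q$ is strictly decreasing and positive on $[0,1/3]$. Hence for every $\al\in[\al_0,1/3]$ we have $q(\al)^{n+1}\le q(\al_0)^{n+1}$, and it suffices to prove the inequality at $\al=\al_0$.

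To apply Lemma~\ref{lem:log_q_denom_lower_bound} we need $\al_0\in(0,1/12]$. The function $x\mapsto \frac{\log x}{2x}$ is decreasing for $x\ge e$. So for $n\ge32$,
\[
\al_0\le\frac{\log 32}{64}\approx 0.054<\frac{1}{12}.
\]
The lemma then gives $\log\frac{1}{q(\al_0)}\ge 6\log(2)\,\al_0=\frac{3\log(2)\log n}{n}$. Multiplying by $n+1>n$, we get
\[
(n+1)\log\frac{1}{q(\al_0)}>3\log(2)\,\log n,
\qquad\text{i.e.}\qquad
q(\al_0)^{n+1}<n^{-3\log 2}.
\]
Here $3\log 2\approx 2.079>2$.

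It remains to check the elementary inequality $n^{-3\log2}\le (n+\frac32)^{-2}$ for $n\ge 32$. This is equivalent to
\[
\left(1+\frac{3}{2n}\right)^{2}\le n^{3\log 2-2}.
\]
The left-hand side decreases in $n$ and the right-hand side increases in $n$ (the exponent $3\log2-2\approx0.0794$ is positive). So it is enough to verify it at $n=32$. There the left-hand side is $(1.046875)^2\approx1.096$, and the right-hand side is $\exp\bigl((3\log2-2)\log32\bigr)\approx e^{0.275}\approx1.32$.

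Combined with the strict inequality from the previous step, this yields $q(\al)^{n+1}<(n+\frac32)^{-2}$. The only delicate point is this final numerical comparison, since the margin $3\log2-2$ is small. This is exactly why the hypothesis $n\ge32$ is needed, and the monotonicity argument handles all larger $n$ at once.
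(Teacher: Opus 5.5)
Your proof is correct and follows the paper's argument. You reduce to $\al_0=\frac{\log n}{2n}$ via the monotonicity of $q$, apply Lemma~\ref{lem:log_q_denom_lower_bound} to obtain $(n+1)\log\frac{1}{q}\ge 3\log(2)\log n$, and then use $n\ge 32$ to beat $2\log\left(n+\frac{3}{2}\right)$. The only difference is cosmetic: the paper bounds $\frac{\log n}{\log(n+3/2)}>\frac{63}{64}$ and uses $\frac{63\cdot 3\log 2}{64}>2$, whereas you compare $\left(1+\frac{3}{2n}\right)^2$ with $n^{3\log 2-2}$ directly and state the monotonicity in $n$ explicitly.
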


\begin{proof}
Since $n\ge 32$,
\[
\frac{\log n}{\log\left(n+\frac{3}{2}\right)}
\ge\frac{\log 32}{\log\frac{67}{2}}
>\frac{63}{64},\qquad
\frac{\log n}{2n}
<\frac{1}{12}.
\]
By Lemma~\ref{lem:log_q_denom_lower_bound},
\[
-\log(q(\al))
\ge -\log\left(q\left(\frac{\log n}{2n}\right)\right)
\ge \frac{3\log(2)\,\log(n)}{n}.
\]
Therefore,
\[
-(n+1)\log(q(\al))
\ge
\frac{63\cdot 3\log 2}{64}
\cdot
\log\left(n+\frac{3}{2}\right)
> 2\log\left(n+\frac{3}{2}\right),
\]
and the result follows.
\end{proof}

\begin{lem}
\label{lem:Rnj_upper_bound_far_from_zero}
If $n\ge 32$
and $j\in\{1,\ldots,m\}$
such that
$j\ge \log n$,
then
\begin{equation}
\label{eq:Rnj_upper_bound_far_from_zero}
|\expterm_{n,j}(\al_{n,j})|
<
\frac{1}{5\left(n+\frac{3}{2}\right)^3}.
\end{equation}
\end{lem}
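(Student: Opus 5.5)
We need $|g_{n,j}(\al_{n,j})| = \frac{1}{(n+\frac32)\pi}\arcsin(p q^{n+1})$ to be less than $\frac{1}{5(n+\frac32)^3}$. It therefore suffices to show
\[
\frac{1}{\pi}\arcsin\bigl(p(\al_{n,j})q(\al_{n,j})^{n+1}\bigr) < \frac{1}{5\left(n+\frac32\right)^2}.
\]
The plan has three steps.

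First, I will show that $\al_{n,j}$ lies in the region covered by Lemma~\ref{lem:q_power_n_upper_bound_far_from_zero}, that is, $\al_{n,j}\ge \frac{\log n}{2n}$. By the lower bound in \eqref{eq:upper_and_lower_bounds_for_al},
\[
\al_{n,j} > \frac{j-\frac{7}{10}+\frac{r}{3}}{n+\frac32}\ge \frac{\log n-\frac{7}{10}}{n+\frac32}.
\]
So it is enough to check that $2n(\log n - \frac{7}{10})\ge (n+\frac32)\log n$, which is equivalent to
\[
(n-\tfrac32)\log n \ge \tfrac{7}{5}n .
\]
For $n\ge 32$ we have $\log n\ge \log 32>3.4$. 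The left side is then at least $(n-\frac32)\cdot 3.4$, and this exceeds $1.4n$ as soon as $2n>5.1$. The bound therefore holds comfortably.

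Second, Lemma~\ref{lem:q_power_n_upper_bound_far_from_zero} now gives $q(\al_{n,j})^{n+1}<(n+\frac32)^{-2}$. Combined with $p\le\frac12$ from Proposition~\ref{prop:p_q_properties}, this yields
\[
y:=p(\al_{n,j})q(\al_{n,j})^{n+1}<\frac{1}{2(n+\frac32)^2}\le \frac{1}{2\cdot 33.5^2}.
\]

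Third, I need to control $\arcsin y$ for this small $y$. Since $\arcsin$ is convex on $[0,1]$, on $[0,\frac18]$ it satisfies $\arcsin y\le 8\arcsin(\frac18)\,y<1.003\,y$. Hence
\[
\frac{1}{\pi}\arcsin y < \frac{1.003}{2\pi(n+\frac32)^2} < \frac{1}{5(n+\frac32)^2},
\]
because $1.003\cdot 5<2\pi$. Dividing by $n+\frac32$ gives \eqref{eq:Rnj_upper_bound_far_from_zero}.

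The only delicate point is the first step. Converting the hypothesis $j\ge\log n$ into $\al_{n,j}\ge\frac{\log n}{2n}$ requires the explicit lower bound from Proposition~\ref{prop:equation_with_fnj_and_inital_approximation}, and that bound loses the constant $\frac{7}{10}$. The condition $n\ge32$ is what absorbs this loss, and the margin there is large, so I expect no real difficulty. The rest is a constant check.
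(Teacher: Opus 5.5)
Your proof is correct and follows the paper's route. The lower bound in \eqref{eq:upper_and_lower_bounds_for_al} gives $\al_{n,j}>\frac{\log n}{2n}$. Lemma~\ref{lem:q_power_n_upper_bound_far_from_zero} then gives $q(\al_{n,j})^{n+1}<(n+\frac32)^{-2}$, and a linear bound on $\arcsin$ together with $p\le\frac12$ finishes. The only differences are in the constant checks: you verify the first step through $(n-\frac32)\log n\ge\frac75 n$ rather than the paper's $n+\frac32\le\frac{67}{64}n$, and you use the chord bound $\arcsin y\le 8\arcsin(\frac18)\,y$ on $[0,\frac18]$ instead of $\arcsin x\le\frac{2}{\sqrt3}x$ on $[0,\frac12]$.
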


\begin{proof}
By~\eqref{eq:al_nj_minus_si_nj_upper_bound},
\[
\al_{n,j}
>
\frac{j-\frac{1}{2}+\frac{r}{3}-\frac{1}{5}}{n+\frac{3}{2}}
\ge
\frac{\log n-\frac{7}{10}}{\frac{67}{64}n}
=
\frac{\left(1-\frac{7}{10\log n}\right)\log n}{\frac{67}{64}n}
>
\frac{\log n}{2n}.
\]
By Lemma~\ref{lem:q_power_n_upper_bound_far_from_zero},
\[
q(\al_{n,j})^{n+1}
<\frac{1}{\left(n+\frac{3}{2}\right)^2}.
\]
Furthermore, $\arcsin x\le\frac{2}{\sqrt{3}} x$ for $0\le x\le 1/2$, and
\[
\expterm_{n,j}(\al_{n,j})
=
\frac{1}{\left(n+\frac{3}{2}\right)\pi}\arcsin\left(p(\al_{n,j})q(\al_{n,j})^{n+1}\right)
<
\frac{1}{\pi\sqrt{3}}
\,\frac{1}{\left(n+\frac{3}{2}\right)^3}
<
\frac{1}{5\left(n+\frac{3}{2}\right)^3}.
\qedhere
\]
\end{proof}

\begin{prop}
\label{prop:al_minus_widetilde_al}
Let $n\ge 16$ and $j\in\{1,\ldots,m\}$ such that $j\ge \log n$.
Then,
\[
|\al_{n,j}-\widetilde{\al}_{n,j}|
<\frac{1}{4\left(n+\frac{3}{2}\right)^3}.
\]
\end{prop}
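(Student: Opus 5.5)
The plan is to apply Proposition~\ref{prop:two_contractive_functions} on the complete metric space $X=[0,1/3]$, taking $f_1\eqdef f_{n,j}$ with fixed point $t\eqdef\al_{n,j}$ (Proposition~\ref{prop:equation_with_fnj_and_inital_approximation}) and $f_2\eqdef\widetilde{f}_{n,j}$ with fixed point $u\eqdef\widetilde{\al}_{n,j}$. By Proposition~\ref{prop:gnj_is_contraction}, $\widetilde{f}_{n,j}$ maps $[0,1/3]$ into itself with Lipschitz coefficient $\ka\le\frac{1}{2n+3}$. The map $f_{n,j}$ does not need to be contractive; it only needs to map $[0,1/3]$ into itself, which was noted after~\eqref{eq:fnj_def}. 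Then
\[
|\al_{n,j}-\widetilde{\al}_{n,j}|
\le\frac{|f_{n,j}(\al_{n,j})-\widetilde{f}_{n,j}(\al_{n,j})|}{1-\ka}
=\frac{|\expterm_{n,j}(\al_{n,j})|}{1-\ka},
\]
since $f_{n,j}-\widetilde{f}_{n,j}=\expterm_{n,j}$ by definition.

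Next, I will bound the numerator by Lemma~\ref{lem:Rnj_upper_bound_far_from_zero}, which gives $|\expterm_{n,j}(\al_{n,j})|<\frac{1}{5(n+3/2)^3}$. For the denominator, $n\ge16$ gives $1-\ka\ge 1-\frac{1}{35}=\frac{34}{35}$. Hence
\[
|\al_{n,j}-\widetilde{\al}_{n,j}|<\frac{35}{34}\cdot\frac{1}{5}\cdot\frac{1}{(n+3/2)^3}<\frac{1}{4(n+3/2)^3},
\]
because $\frac{35}{170}<\frac14$.

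The main obstacle is a mismatch of hypotheses. Lemma~\ref{lem:Rnj_upper_bound_far_from_zero}, and Lemma~\ref{lem:q_power_n_upper_bound_far_from_zero} behind it, are stated for $n\ge32$, while the proposition assumes $n\ge16$. I would handle $16\le n\le31$ separately. In this range, $j\ge\log n$ forces $j\ge3$, since $\log 16\approx2.77$. Then Proposition~\ref{prop:equation_with_fnj_and_inital_approximation} gives $\al_{n,j}>\frac{j-0.7}{n+3/2}$. I would bound $q(\al_{n,j})^{n+1}$ directly: use the monotonicity of $q$ together with Lemma~\ref{lem:log_q_denom_lower_bound}, or, where $\al>1/12$, the bound $q(\al)\le q(1/12)=1/\sqrt2$. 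The target is $p\,q^{n+1}\le\frac{\sqrt3\pi}{2}\cdot\frac15\,(n+3/2)^{-2}$, which is enough after the inequality $\arcsin x\le\frac{2}{\sqrt3}x$.

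If this finite check turns out to be tight for a few pairs $(n,j)$, I would fall back on a sharper estimate using $p\le\frac12$, giving $\frac{1}{\pi}\arcsin(pq^{n+1})\le\frac{q^{n+1}}{\sqrt3\,\pi}$. As a last resort I would verify those finitely many cases numerically. An alternative is to observe that the lemma's proof only used $n\ge32$ to compare $\log n$ with $\log(n+3/2)$ and to ensure $\frac{\log n}{2n}<\frac1{12}$, and to redo those two inequalities with constants adequate for $n\ge16$.
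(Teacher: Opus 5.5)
Your proposal is correct and follows the paper's argument: Proposition~\ref{prop:two_contractive_functions} with $\widetilde{f}_{n,j}$ as the contraction (as you note, it is the only map that needs to be contractive), so the numerator is $|\expterm_{n,j}(\al_{n,j})|$, bounded by Lemma~\ref{lem:Rnj_upper_bound_far_from_zero}. Your separate treatment of $16\le n\le31$ is needed, since the paper's proof applies that lemma (stated only for $n\ge32$) without comment, and your check goes through: in this range $j\ge\log n$ gives $\al_{n,j}>\frac{1}{12}$, hence $q(\al_{n,j})\le\frac{1}{\sqrt2}$, and with $p\le\frac12$ this yields $p\,q^{n+1}\le 2^{-(n+3)/2}\le\frac{\sqrt3\,\pi}{10\,(n+3/2)^2}$ for every $n\ge16$.
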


\begin{proof}
By Theorem~\ref{thm:char_equation_as_fixed_point} and Proposition~\ref{prop:gnj_is_contraction},
$f_{n,j}$ and $\widetilde{f}_{n,j}$ are contractions
on $\left[\frac{1}{3\left(n+\frac{3}{2}\right)},\frac{1}{3}\right]$.
The Lipschitz coefficient of
$\widetilde{f}_{n,j}$ is bounded by
\[
\frac{1}{2n+3}
\le \frac{1}{131}.
\]
By Proposition~\ref{prop:two_contractive_functions},
\[
|\al_{n,j}-\widetilde{\al}_{n,j}|
\le\frac{1}{1-\frac{1}{131}}\,
|f_{n,j}(\al_{n,j})
-\widetilde{f}_{n,j}(\al_{n,j})|
\le
\frac{5}{4}
|\expterm_{n,j}(\al_{n,j})|
<
\frac{1}{4\left(n+\frac{3}{2}\right)^3}.
\qedhere
\]
\end{proof}

\begin{prop}
\label{prop:alphas_asympt_expansion_not_too_close_to_the_origin}
As $n\to\infty$ and $j\ge\log n$,
the numbers $\al_{n,j}$ have the following asymptotic expansion:
\begin{equation}
\label{eq:alphas_asympt_expansion_not_too_close_to_the_origin}
\al_{n,j}
=\si_{n,j}
-\frac{\tht(\si_{n,j})}{n+\frac{3}{2}}
+\frac{\tht(\si_{n,j})\tht'(\si_{n,j})}{\left(n+\frac{3}{2}\right)^2}
+O\left(\frac{1}{\left(n+\frac{3}{2}\right)^3}\right).
\end{equation}
\end{prop}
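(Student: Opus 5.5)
The plan is to split $\al_{n,j}$ into the fixed point $\widetilde{\al}_{n,j}$ of the approximate map $\widetilde{f}_{n,j}$ plus a small correction, and then expand $\widetilde{\al}_{n,j}$ with Corollary~\ref{cor:scheme_particular_case3}. By Proposition~\ref{prop:al_minus_widetilde_al}, for $n\ge 16$ and $j\ge\log n$ we already have $|\al_{n,j}-\widetilde{\al}_{n,j}|<\frac{1}{4}\left(n+\frac{3}{2}\right)^{-3}$. This error is absorbed into the $O$-term, so it suffices to prove the expansion for $\widetilde{\al}_{n,j}$.

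The fixed-point equation $\al=\widetilde{f}_{n,j}(\al)$ reads
\[
\al-\si_{n,j}+\tht(\al)h=0,\qquad h\eqdef\frac{1}{n+\frac{3}{2}}.
\]
The difficulty is that $\si_{n,j}$ depends on $n$ and $j$, so the center of the expansion moves. To handle this, I treat $s_0\eqdef\si_{n,j}$ as a parameter and apply Corollary~\ref{cor:scheme_particular_case3} separately for each pair $(n,j)$. The data are
\[
X=[0,1/3],\quad D=\{h\},\quad \om_0(x)=x-s_0,\quad \om_1=\tht,\quad \om_2=\rho_3=0,\quad z(h)=\widetilde{\al}_{n,j}.
\]
The hypotheses are checked as follows.
\begin{itemize}
\item $\om_0'=1$, so $\ell=1$.
\item $s_0\in\interior(X)$, because $0<\si_{n,j}<1/3$.
\item $\om_0$ is a polynomial, so $c_{0,2}=c_{0,3}=0$.
\item Since $\tht$ is smooth on $[0,1/3]$ (Proposition~\ref{prop:theta}), the constants $c_{1,0}=\|\tht\|_\infty$, $c_{1,1}=\|\tht'\|_\infty=1/2$ and $c_{1,2}=\|\tht''\|_\infty$ are finite absolute constants.
\item $\Delta=1/(3+\frac{3}{2})$ works for all $n\ge 3$.
\end{itemize}

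The corollary then gives
\[
\widetilde{\al}_{n,j}=\si_{n,j}-\tht(\si_{n,j})h+\tht(\si_{n,j})\tht'(\si_{n,j})h^2+\eta_3,
\qquad |\eta_3|\le\left(c_{1,0}c_{1,1}^2+\tfrac12 c_{1,0}^2c_{1,2}\right)h^3.
\]
The essential point, and the reason to insist on explicit constants, is that this bound does not depend on $s_0$, hence not on $j$. Combined with the $\frac14 h^3$ bound from the first paragraph, this yields \eqref{eq:alphas_asympt_expansion_not_too_close_to_the_origin} with an $O$-constant uniform in $j\ge\log n$.

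The only delicate point is the formal fit with the scheme. Proposition~\ref{prop:scheme_solve_equations_by_asymptotic_expansions1} needs $z(h)\in X$ and uses the mean value theorem on $X$. Both are fine here: $\widetilde{f}_{n,j}$ maps $[0,1/3]$ into itself with a unique fixed point (Proposition~\ref{prop:gnj_is_contraction}), and the mean value argument runs between $s_0$ and $z(h)$, both in the interval $X$. Uniformity then reduces to checking that every constant used ($\ell$, $c_{1,0}$, $c_{1,1}$, $c_{1,2}$, $\Delta$) is independent of $n$ and $j$, which holds by construction.
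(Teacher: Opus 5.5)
Your proposal is correct and follows the paper's own argument: you reduce to $\widetilde{\al}_{n,j}$ via Proposition~\ref{prop:al_minus_widetilde_al}, then apply Corollary~\ref{cor:scheme_particular_case3} with $\om_0(x)=x-\si_{n,j}$, $\om_1=\tht$, $h=1/(n+\frac{3}{2})$, and use the fact that all the constants are independent of $\si_{n,j}$ (hence of $j$). Taking $D=\{h\}$ for each fixed pair $(n,j)$ is a slightly cleaner formalization of the paper's parametrization by $d=\si_{n,j}$, but it is the same proof.
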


\begin{proof}
Due to Proposition~\ref{prop:al_minus_widetilde_al}, it is sufficient to obtain an asymptotic expansion of
$\widetilde{\al}_{n,j}$.
Since $\tht$ is a smooth function not depending on $n$,
formula~\eqref{eq:alphas_asympt_expansion_not_too_close_to_the_origin}
can be easily proved by methods from~\cite{BogoyaBottcherGrudskyMaximenko2015,BarreraBoettcherGrudskyMaximenko2018,BoettcherGrudskyMaksimenko2010}.

Another way to obtain the same result is to use the scheme from Section~\ref{sec:scheme_for_solving_equations}.
Introducing the parameters
\[
d = \si_{n,j},\qquad
h=\frac{1}{n+\frac{3}{2}},
\]
we rewrite the equation
$\al=\widetilde{f}_{n,j}(\al)$
defining
$\widetilde{\al}_{n,j}$
in the form
$\Omega_d(\al,h)=0$, where
\[
\Omega_d(\al,h)
\eqdef
\al - d + \tht(\al) h.
\]
Finally, we apply 
Corollary~\ref{cor:scheme_particular_case3}
with the following functions
instead of $\om_0$ and $\om_1$:
\[
\om_{d,0}(\al)\eqdef \al - d,
\qquad
\om_{d,1}(\al)\eqdef \tht(\al).
\]
Notice that $s_{d,0}\eqdef d$
is the unique zero
of the function $\om_{d,0}$.
It is important that the functions
$\om_{d,0}$, $\om_{d,1}$
and their derivatives
admit uniform bounds 
(not depending on $d$)
when $d$ belongs to $[0,1/3]$.
Therefore, the residue term in~\eqref{eq:alphas_asympt_expansion_not_too_close_to_the_origin}
admit a bound not depending on $d$. 
\end{proof}

\begin{proof}[Proof of Theorem~\ref{thm:lambda_asympt_not_too_close_to_the_origin}]
Follows from~\eqref{eq:alphas_asympt_expansion_not_too_close_to_the_origin}
and $\la_{n,j}=G(\al_{n,j})$
by using the Taylor expansion of $G$ at the point $\si_{n,j}$.
See similar computations in \cite{BogoyaBottcherGrudskyMaximenko2015,BarreraBoettcherGrudskyMaximenko2018,BoettcherGrudskyMaksimenko2010}.
\end{proof}

The first derivatives of $G$ participating in Theorem~\ref{thm:lambda_asympt_not_too_close_to_the_origin} can be computed by the following explicit formulas,
with $t\eqdef \tan\left(\frac{\pi}{3}-\pi\al\right)$:
\begin{equation}
\label{eq:G_derivatives}
G(\al)
=L\,
\frac{1-\frac{1}{3}t^2}%
{(1+t^2)^{2/3}},
\quad
G'(\al)
=2L\pi\,
\frac{t+\frac{1}{9}t^3}%
{(1+t^2)^{2/3}},
\quad
G''(\al)
=-2L\pi^2\,
\frac{1+\frac{5}{27}t^4}%
{(1+t^2)^{2/3}}.
\end{equation}

\begin{rem}
With $h=\frac{1}{n+\frac{3}{2}}$,
the equation $\al=\widetilde{f}_{n,j}(\al)$
for $\widetilde{\al}_{n,j}$
can also be written in the form
\[
\al
+
\left(-j + \frac{1}{2} - \frac{r}{3}
+ \tht(\al)\right) h = 0.
\]
We can apply Corollary~\ref{cor:scheme_particular_case3}
with
\[
\widetilde{\om}_{r,j,0}(\al)
\eqdef\al,
\qquad
\widetilde{\om}_{r,j,1}(\al)
\eqdef
-j+\frac{1}{2}-\frac{r}{3}+\tht(\al),
\qquad
\widetilde{s}_{r,j,0}\eqdef0.
\]
By Proposition~\ref{prop:theta},
we obtain
\[
\widetilde{\om}_{r,j,1}(0)
=-j+\frac{1}{2}-\frac{r}{3},
\qquad
\widetilde{\om}_{r,j,1}'(0)
=\tht'(0)=\frac{1}{2},
\qquad
\sup_{0\le \al\le \frac{1}{3}}
|\widetilde{\om}_{r,j,1}(\al)|
\le c_{r,j,1,0} \eqdef j+1.
\]
Thereby, we get the asymptotic expansion
\begin{equation}
\label{eq:altilde_near_zero}
\widetilde{\al}_{n,j}
=
\frac{j-\frac{1}{2}+\frac{r}{3}}{n+\frac{3}{2}}
-
\frac{j-\frac{1}{2}+\frac{r}{3}}{2\left(n+\frac{3}{2}\right)^2}
+
O\left(\frac{j^2}{\left(n+\frac{3}{2}\right)^3}\right).
\end{equation}
Notice that
\[
\frac{1}{n+2}
=\frac{1}{n+\frac{3}{2}+\frac{1}{2}}
=\frac{1}{\left(n+\frac{3}{2}\right)
\left(1+\frac{1}{2\left(n+\frac{3}{2}\right)}\right)}
=\frac{1}{n+\frac{3}{2}}
-\frac{1}{2\left(n+\frac{3}{2}\right)^2}
+O\left(\frac{1}{\left(n+\frac{3}{2}\right)^3}\right).
\]
Therefore,
the expansion~\eqref{eq:altilde_near_zero}
is equivalent to the following one:
\begin{equation}
\label{eq:altilde_near_zero1}
\widetilde{\al}_{n,j}
=
\frac{j-\frac{1}{2}+\frac{r}{3}}{n+2}
+
O\left(\frac{j^2}{(n+2)^3}\right).
\end{equation}
Since the residue terms in~\eqref{eq:altilde_near_zero}
and
\eqref{eq:altilde_near_zero1}
depend on $j$,
the asymptotic expansions
\eqref{eq:altilde_near_zero}
and~\eqref{eq:altilde_near_zero1}
are much less precise than~\eqref{eq:alphas_asympt_expansion_not_too_close_to_the_origin}
for $j\ge \log n$.
\end{rem}

\begin{rem}
\label{rem:decreasing_ordering}
The initial approximation
$\si_{n,j}\eqdef\frac{j-\frac{1}{2}+\frac{r}{3}}{n+\frac{3}{2}}$
depends on $r=\remainder(n,3)$.
Numbering the eigenvalues in the inverse order,
we can avoid this dependence:
\[
\si_{n,m+1-k}
=\frac{m+1-k-\frac{1}{2}+\frac{r}{3}}{n+\frac{3}{2}}
=\frac{\frac{n}{3}+\frac{1}{2}}{n+\frac{3}{2}}
-\frac{k}{n+\frac{3}{2}}
=\frac{1}{3}-\frac{k}{n+\frac{3}{2}}.
\]
\end{rem}

So, the asymptotic formula from Theorem~\ref{thm:lambda_asympt_not_too_close_to_the_origin} can be rewritten in the following form, which is more convenient for the eigenvalues close to $L$.

\begin{cor}
\label{cor:lambda_asympt_not_too_close_to_the_origin_reverse}
For every $n$ large enough
and every $k$ in $\{1,\ldots,m\}$, if $k\le m+1-\frac{3}{2}\log(n)$, then
\begin{equation}
\label{eq:lambda_asympt_not_too_close_to_the_origin_reverse}
\la_{n,m+1-k}
=\overleftarrow{A}_0\left(\frac{k}{n+\frac{3}{2}}\right)
+\frac{\overleftarrow{A}_1\left(\frac{k}{n+\frac{3}{2}}\right)}{n+\frac{3}{2}}
+\frac{\overleftarrow{A}_2\left(\frac{k}{n+\frac{3}{2}}\right)}{\left(n+\frac{3}{2}\right)^2}
+O\left(\frac{1}{\left(n+\frac{3}{2}\right)^3}\right),
\end{equation}
where
$\overleftarrow{A}_\ell(x)\eqdef A_\ell\left(\frac{1}{3}-x\right)$, $\ell\in\{0,1,2\}$.
\end{cor}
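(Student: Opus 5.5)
The plan is to obtain the corollary from Theorem~\ref{thm:lambda_asympt_not_too_close_to_the_origin} by relabeling the index; no new analysis is needed. Put $j\eqdef m+1-k$. By Remark~\ref{rem:decreasing_ordering},
\[
\si_{n,j}=\frac{1}{3}-\frac{k}{n+\frac{3}{2}},
\]
so $A_\ell(\si_{n,j})=\overleftarrow{A}_\ell\bigl(k/(n+\frac{3}{2})\bigr)$ for $\ell\in\{0,1,2\}$. This follows directly from the definition $\overleftarrow{A}_\ell(x)=A_\ell(\frac13-x)$. Note that $x=k/(n+\frac32)$ lies in $[0,1/3]$, since $k\le m\le n/3$, so $\overleftarrow{A}_\ell$ is evaluated inside its domain. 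Substituting these identities into~\eqref{eq:lambda_asympt_not_too_close_to_the_origin} gives~\eqref{eq:lambda_asympt_not_too_close_to_the_origin_reverse}, with the same remainder $R_{1,n,j}$, bounded by $C_1/(n+\frac32)^3$.

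It remains to check that the indices allowed in the corollary fall in the range where Theorem~\ref{thm:lambda_asympt_not_too_close_to_the_origin} applies, namely $\lfloor\log n\rfloor\le j\le m$. The upper bound $j\le m$ is equivalent to $k\ge1$. For the lower bound, the hypothesis $k\le m+1-\frac32\log n$ gives $j=m+1-k\ge\frac32\log n\ge\log n\ge\lfloor\log n\rfloor$. The factor $\frac32$ is therefore more than enough; the stated condition is conservative.

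The one point that needs care is that the $O$-constant must be uniform in $k$. This holds because the constant $C_1$ in Theorem~\ref{thm:lambda_asympt_not_too_close_to_the_origin} is uniform over all $j$ in $\{\lfloor\log n\rfloor,\ldots,m\}$. The only other bookkeeping is the threshold "for every $n$ large enough": the corollary should inherit the same threshold $n_0$ (in particular $n\ge 32$, used in Lemma~\ref{lem:Rnj_upper_bound_far_from_zero}). For such $n$ the admissible set of $k$ may be empty for small $m$, and then the statement holds vacuously.
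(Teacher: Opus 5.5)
Your proposal is correct and is the paper's argument: substitute $j=m+1-k$, use the identity $\si_{n,m+1-k}=\frac{1}{3}-\frac{k}{n+\frac{3}{2}}$ from Remark~\ref{rem:decreasing_ordering}, and apply Theorem~\ref{thm:lambda_asympt_not_too_close_to_the_origin}, whose constant $C_1$ is uniform in $j$. Your index check $j\ge\frac{3}{2}\log n\ge\log n$ is sound, and it also meets the stronger condition $j\ge\log n$ that the paper actually uses in Lemma~\ref{lem:Rnj_upper_bound_far_from_zero} and Proposition~\ref{prop:al_minus_widetilde_al}.
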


\section{Asymptotic expansion close to the origin}
\label{sec:close_to_the_origin}

In this section,
we derive an asymptotic expansion of the eigenvalues $\la_{n,j}$ as $n\to\infty$ and $j\le\log n$.
Now, the third term from Widom's formula~\eqref{eq:Widom_det}
is comparable with the first two terms,
and we have to take it into account.
In other words, we cannot drop the term $\expterm_{n,j}$
in the main equation~\eqref{eq:maineq}.

As we show below,
it is natural to work with the small parameter $h=1/(n+2)$
and treat $\ga=(n+2)\al$ as a new unknown variable in the characteristic equation.
Working with $(n+2)\al$ instead of
$n\al$ or $\left(n+\frac{3}{2}\right)\al$
allows us to reduce the number of terms in some equations,
as it can be seen from~\eqref{eq:altilde_near_zero1}
or from the computations below.

In this section, we denote by $\ga_{n,j}$
the product $(n+2)\al_{n,j}$.

We recall that $\al_{n,j}$ is the solution of the main equation~\eqref{eq:maineq}.
Equivalently, $\al_{n,j}$ satisfies
\begin{equation}
\label{eq:maineq1}
\left(n+\frac{3}{2}\right)\al_{n,j}
+\tht(\al_{n,j})
=j-\frac{1}{2}+\frac{r}{3}
+\frac{(-1)^{r+j+1}}{\pi}
\arcsin\left(p(\al_{n,j})q(\al_{n,j})^{n+1}\right).
\end{equation}

We start with an approximation for
$\tht(\al)$.
Recall that $\tht$ is defined by~\eqref{eq:def_theta}.

\begin{lem}
\label{lem:approximation_theta_near_origin}
The following asymptotic expansion hold for $\al$ in $[0,1/3]$:
\begin{equation}
\label{eq:tht_approximation_near_0}
\tht(\al) = 
\frac{1}{2}\al-\frac{2\sqrt{3}}{3}\pi\al^2 + O(\al^3).
\end{equation}
\end{lem}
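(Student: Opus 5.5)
The expansion~\eqref{eq:tht_approximation_near_0} is a second-order Taylor expansion of $\tht$ at $\al=0$ with a uniform remainder on the compact interval $[0,1/3]$. By Proposition~\ref{prop:theta}, $\tht$ is infinitely smooth on $[0,1/3]$ (indeed analytic near it), so Taylor's theorem with Lagrange remainder gives
\[
\tht(\al)=\tht(0)+\tht'(0)\al+\frac{\tht''(0)}{2}\al^2+R(\al),
\qquad |R(\al)|\le \frac{\max_{[0,1/3]}|\tht'''|}{6}\,\al^3 .
\]
This bound is uniform in $\al\in[0,1/3]$ because $\tht'''$ is continuous on this compact interval. It therefore remains to compute $\tht(0)$, $\tht'(0)$ and $\tht''(0)$.

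From Proposition~\ref{prop:theta}, $\tht(0)=0$ and $\tht'(0)=1/2$. The latter also follows from~\eqref{eq:tht_D1}, since $\tan(\pi/3)=\sqrt3$ gives $\tht'(0)=\frac52-\frac{24}{12}=\frac12$.

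For $\tht''(0)$, differentiate~\eqref{eq:tht_D1}. Set $t=\tan(\frac{\pi}{3}-\pi\al)$, so that $\frac{dt}{d\al}=-\pi(1+t^2)$. Then
\[
\tht''(\al)=\frac{24\cdot 2t}{(9+t^2)^2}\cdot\frac{dt}{d\al}
=-\frac{48\pi\,t(1+t^2)}{(9+t^2)^2}.
\]
At $\al=0$ we have $t=\sqrt3$, which gives $\tht''(0)=-\frac{48\pi\sqrt3\cdot4}{144}=-\frac{4\sqrt3}{3}\pi$. Hence $\frac{\tht''(0)}{2}=-\frac{2\sqrt3}{3}\pi$, as claimed.

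No step is a genuine obstacle. The only care needed is the sign of the chain-rule factor $dt/d\al$ and the observation that $\tht'''$ is bounded on $[0,1/3]$. For the latter, note that $t$ ranges over $[0,\sqrt3]$, where $9+t^2\ge 9$, so all derivatives of $\tht$ are rational functions of $t$ with nonvanishing denominators and are therefore bounded.
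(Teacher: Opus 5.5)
Your proof is correct and is essentially the paper's argument: both are second-order Taylor expansions of $\tht$ at the origin. The paper obtains the coefficients by composing the series of $\arctan$ at $\sqrt{3}/3$ with that of $\tan$ at $\pi/3$, while you get them by differentiating \eqref{eq:tht_D1} once more. Your route reuses \eqref{eq:tht_D1}, makes the uniformity of the $O(\al^3)$ remainder on all of $[0,1/3]$ explicit through the Lagrange form, and avoids the composed expansion \eqref{eq:our_tan_expansion}, whose displayed right-hand side contains a stray constant term $\frac{\sqrt{3}}{3}$ (a typo in the paper).
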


\begin{proof}
We use the Taylor expansion of $\arctan$
around a point $t_0$:
\begin{equation}
\label{eq:actan_expansion}
\arctan(t)
=\arctan(t_0)
+\frac{1}{t_0^2+1}(t-t_0)
-\frac{t_0}{(t_0^2+1)^2}(t-t_0)^2
+O((t-t_0)^3).
\end{equation}
In our case,
$t_0=\frac{\sqrt{3}}{3}$
and
$t=\frac{\tan\left(\frac{\pi}{3}-\pi\al\right)}{3}$.
Using the Taylor expansion of $\tan$ around the point $\pi/3$, we get
\begin{equation}
\label{eq:our_tan_expansion}
t-t_0
=
\frac{1}{3}
\left(
\tan\left(\frac{\pi}{3}-\pi\al\right)
-\sqrt{3}
\right)
=
\frac{\sqrt{3}}{3}
-\frac{4\pi}{3}\,\al
+\frac{4\sqrt{3}\,\pi^2}{3}\,\al^2
+O(\al^3).
\end{equation}
Substituting~\eqref{eq:our_tan_expansion}
into~\eqref{eq:actan_expansion},
we obtain~\eqref{eq:tht_approximation_near_0}.
\end{proof}

Due to Lemma~\ref{lem:approximation_theta_near_origin},
the left-hand side of~\eqref{eq:maineq1}
transforms to $(n+2)\al_{n,j}$ plus some residue term.
Motivated by this observation,
we use the change of variables
\[
\ga = (n+2)\al.
\]
Also, instead of the large parameter,
we prefer working with the small parameter
\[
h = \frac{1}{n+2}.
\]
We define
\begin{equation}
p_1(\al) \eqdef \frac{p(\al)}{q(\al)},
\qquad
\text{that is,}
\qquad
p_1(\al) =
\frac{2\sin\left(\frac{\pi}{3}-\pi\al\right)}{\sqrt{9+\tan^2\left(\frac{\pi}{3}-\pi\al\right)}}.
\end{equation}
Next, for every $r$ in $\{0,1,2\}$ and every $j$ in $\bN$, we define
$\Omega_{r,j}\colon
\left[0,\frac{1}{3h}-\frac{2}{3}\right] \times (0,1]\to\bR$ by
\begin{equation}
\label{eq:def_Omega}
\Omega_{r,j}(\ga,h)
\eqdef
\ga
+\tht(\ga h)-\frac{\ga h}{2}
- j+\frac{1}{2}-\frac{r}{3}
+\frac{(-1)^{r+j}}{\pi}
\arcsin\left(
p_1(\ga h)q(\ga h)^{1/h}\right).
\end{equation}
The technical conditions
$0\le \ga \le \frac{1}{3h}-\frac{2}{3}$
are equivalent to $0\le \al \le 1/3$.
They guarantee that the arguments of $\tht$, $p_1$, and $q$ in the right-hand side of~\eqref{eq:def_Omega}
belong to $[0,1/3]$.

For every $n$ in $\bN$ and every $j$ in $\{1,\ldots,m\}$, we define
\[
\ga_{n,j}
\eqdef (n+2)\al_{n,j}.
\]

\begin{prop}[main equation near the origin in terms of the variables $\ga$ and $h$]
\label{prop:main_equation_near_the_origin_in_terms_of_ga_and_h}
Let $n\in\bN$,
$j\in\{1,\ldots,m\}$,
and $h\eqdef 1/(n+2)$.
Then, $\ga_{n,j}$ is the unique number
$\ga$ in $[0,\frac{1}{3h}-\frac{2}{3}]$ satisfying 
\begin{equation}
\label{eq:equation_near_zero_in_terms_of_Omega}
\Omega_{n,j}(\ga,h)=0.
\end{equation}
Moreover, the following estimates hold:
\begin{equation}
\label{eq:rough_bounds_for_ga}
j - \frac{2}{3}
\le \ga_{n,j}
\le j + \frac{1}{2}.
\end{equation}
\end{prop}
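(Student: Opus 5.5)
This is a direct rewriting of the main equation in the variables $\ga=(n+2)\al$ and $h=1/(n+2)$, followed by the known localization of $\al_{n,j}$. (The statement writes $\Omega_{n,j}$, but the function meant is $\Omega_{r,j}$ from~\eqref{eq:def_Omega}.)

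\emph{Step 1: the equation in the new variables.} Since $n+\frac{3}{2}=\frac{1}{h}-\frac{1}{2}$, we have $(n+\frac{3}{2})\al=\ga-\frac{\ga h}{2}$ with $\al=\ga h$. Also $q^{n+1}=q^{1/h}/q$, so $p\,q^{n+1}=p_1\,q^{1/h}$. Substituting into~\eqref{eq:maineq1} and moving every term to the left-hand side gives exactly $\Omega_{r,j}(\ga h^{-1}h,h)=\Omega_{r,j}(\ga,h)=0$. The sign $(-1)^{r+j+1}$ becomes $(-1)^{r+j}$ after the transfer.

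The map $\ga\mapsto\al=\ga h$ is a bijection from $[0,\frac{1}{3h}-\frac{2}{3}]$ onto $[0,\frac{1}{3}-\frac{2h}{3}]$. It therefore suffices to show that on this $\al$-range, \eqref{eq:maineq1} with the given $j$ has exactly one solution, namely $\al_{n,j}$. Note $\frac{1}{3}-\frac{2h}{3}<\frac13$, so first we must check that $\al_{n,j}$ lies in this smaller range. By~\eqref{eq:upper_and_lower_bounds_for_al} and $j\le m$,
\[
\al_{n,j}<\frac{m-\frac{3}{10}+\frac{r}{3}}{n+\frac32}<\frac{\frac n3-\frac{3}{10}}{n+\frac32}<\frac{n/3}{n+2}=\frac13-\frac{2h}{3},
\]
where the last inequality is a short check. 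So $\ga_{n,j}$ is a solution.

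\emph{Step 2: uniqueness.} Let $\al\in(0,1/3)$ solve \eqref{eq:maineq1} for this $j$. Then $\al=f_{n,j}(\al)$, so $E_n(\al)=0$. By Theorem~\ref{thm:localization_of_spectrum}, the zeros of $E_n$ in $(0,1/3)$ are exactly the $m$ values $\al_{n,k}$. In the proof of Proposition~\ref{prop:equation_with_fnj_and_inital_approximation}, each such zero satisfies \eqref{eq:transformed_equation} only for the index $j$ with $|\al-\si_{n,j}|<\frac{1}{5(n+3/2)}$. The intervals around distinct $\si_{n,k}$ are disjoint, since their spacing is $\frac1{n+3/2}$. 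Hence $\al=\al_{n,j}$.

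The endpoint $\al=0$ must be excluded separately. At $\ga=0$,
\[
\Omega_{r,j}(0,h)=-j+\tfrac12-\tfrac r3+\tfrac{(-1)^{r+j}}{\pi}\arcsin(p_1(0)).
\]
Here $p_1(0)=\frac{2\cdot\frac{\sqrt3}{2}}{\sqrt{12}}=\frac12$, so the last term is $\pm\frac16$. The value is therefore at most $-\frac12+\frac16<0$, so $\ga=0$ is not a solution.

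\emph{Step 3: the bounds~\eqref{eq:rough_bounds_for_ga}.} Rearranging $\Omega_{r,j}(\ga,h)=0$ gives
\[
\ga=j-\tfrac12+\tfrac r3-\bigl(\tht(\al)-\tfrac{\al}{2}\bigr)+\tfrac{(-1)^{r+j+1}}{\pi}\arcsin(p\,q^{n+1}).
\]
We bound each piece. Since $\tht'\le\frac12$ and $\tht(0)=0$, we get $\tht(\al)-\frac{\al}{2}\le0$. Also $\tht\ge0$ and $\al\le\frac13$, so $\frac{\al}{2}-\tht(\al)\le\frac16$. Finally, the arcsin term lies in $[-\frac16,\frac16]$ because $p\,q^{n+1}\le\frac12$.

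The upper bound is then $\ga\le j-\frac12+\frac23+\frac16+\frac16$, which is too large when $r=2$. This is the main obstacle: the crude bound $\frac{\al}{2}\le\frac16$ must be sharpened. Since $\al\le\frac{j+1}{n+1.5}$ is small relative to $j$, I would instead use $\frac{\al}{2}-\tht(\al)\le\frac{\ga h}{2}$ together with $\ga\approx j$, giving a term of order $j/n$. Alternatively, I would use $\al_{n,j}<\xi_{n,j}$ from Proposition~\ref{prop:rough_localization_of_solutions}:
\[
\ga_{n,j}<\frac{(n+2)(j+\frac r3)}{n+\frac32}.
\]
For the lower bound, combining $\al_{n,j}>\xi_{n,j-1}$ with $n+2>n+\frac32$ gives $\ga_{n,j}>j-1+\frac r3$. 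This should be refined using the sign of the arcsin term together with $\tht\ge0$.

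I expect the delicate part to be matching the exact constants $j-\frac23$ and $j+\frac12$ uniformly in $r$. This will likely require treating the parity $(-1)^{r+j}$ cases separately and using $q^{n+1}\le\frac14$ away from the origin (Lemma~\ref{lem:qn_upper_bound}), which makes the arcsin term at most $\frac1{25}$.
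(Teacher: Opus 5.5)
Your Steps~1 and~2 are fine. In fact they are more careful than the paper, which only asserts that~\eqref{eq:main_equation_for_ga} is equivalent to $\Omega_{r,j}(\ga,h)=0$. You rightly note that $\ga\le\frac{1}{3h}-\frac23$ means $\al\le\frac{n}{3(n+2)}<\frac13$, not $\al\le\frac13$. You check that $\al_{n,j}$ lies in this range; the first ``$<$'' in your chain is actually an equality, since $m+\frac r3=\frac n3$. You obtain uniqueness from the zero set of $E_n$ and the disjoint windows of radius $\frac{1}{5(n+3/2)}$ in Proposition~\ref{prop:equation_with_fnj_and_inital_approximation}. And you rule out $\ga=0$ directly.

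The gap is in Step~3, and it comes from an arithmetic slip. The bound you wrote, $\ga\le j-\frac12+\frac23+\frac16+\frac16$, is not too large: $-\frac12+\frac23+\frac16+\frac16=\frac12$, so it gives exactly $\ga_{n,j}\le j+\frac12$. The lower bound follows just as directly from facts you already proved, namely $\frac r3\ge0$, $\frac{\al}{2}-\tht(\al)\ge0$, and the arcsin term being at least $-\frac16$. Together these give $\ga_{n,j}\ge j-\frac12-\frac16=j-\frac23$. This crude estimate is precisely the paper's proof. No parity split and no appeal to Lemma~\ref{lem:qn_upper_bound} is needed.

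As written, however, you leave both bounds open, and the fallbacks you propose would not reach the stated constants:
\begin{itemize}
\item The bound $\al_{n,j}<\xi_{n,j}$ yields only $\ga_{n,j}<\frac{n+2}{n+3/2}\bigl(j+\frac r3\bigr)$, which exceeds $j+\frac23$ for $r=2$.
\item The bound $\al_{n,j}>\xi_{n,j-1}$ yields $\ga_{n,j}>j-1+\frac r3$, which is weaker than $j-\frac23$ for $r=0$.
\item Replacing $\frac{\al}{2}-\tht(\al)$ by $\frac{\ga h}{2}=\frac{\al}{2}$ gains nothing uniformly, since this quantity is close to $\frac16$ when $j$ is near $m$.
\end{itemize}
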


\begin{proof}
After the changes of variables $h=1/(n+2)$ and $\ga=(n+2)\al$,
the equation~\eqref{eq:maineq1}
takes the form
\begin{equation}
\label{eq:main_equation_for_ga}
\ga
=
\frac{\ga h}{2} - \tht(\ga h)
+j-\frac{1}{2}+\frac{r}{3}
+\frac{(-1)^{r+j+1}}{\pi}
\arcsin\left(p_1(\ga h)q(\ga h)^{1/h}\right),
\end{equation}
which is equivalent to 
$\Omega_{n,j}(\ga,h)=0$.

Let us derive inequlities~\eqref{eq:rough_bounds_for_ga} 
from~\eqref{eq:upper_and_lower_bounds_for_al}.
An elementary analysis shows that
$0 \le \frac{\al}{2}-\tht(\al) \le \frac{1}{6}$
and $0\le p_1(\al) \le 1/2$
for every $\al$ in $[0,1/3]$.
Since $0\le \arcsin(t)\le \pi/6$ for every $t$ in $[0,1/2]$,
we get
\[
\ga_{n,j}
\le j - \frac{1}{2}+\frac{2}{3}
+ \frac{1}{6}+\frac{1}{6}
= j+\frac{1}{2},
\qquad
\ga_{n,j}
\ge
j - \frac{1}{2} - \frac{1}{6}
= j - \frac{2}{3}.
\]
Similar estimates could be obtained from
\eqref{eq:upper_and_lower_bounds_for_al}, though the additive constants would be slightly different.
\end{proof}

\begin{lem}
\label{lem:power_by_exp}
If $c_1,c_2>0$,
then the function
$x\mapsto x^{c_1}\enumber^{-c_2 x}$
is bounded on $[0,+\infty)$.
\end{lem}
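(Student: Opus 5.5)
The function $\phi(x)=x^{c_1}\enumber^{-c_2x}$ is continuous on $[0,+\infty)$, so the plan is to show that it tends to $0$ at infinity and then use continuity on a compact interval. The simplest route is to locate the maximum explicitly by elementary calculus.

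First, for $x>0$ we compute
\[
\phi'(x)=x^{c_1-1}\enumber^{-c_2x}\,(c_1-c_2x).
\]
Hence $\phi$ increases on $(0,c_1/c_2]$ and decreases on $[c_1/c_2,+\infty)$. We also have $\phi(0)=0$ and $\phi\ge0$. It follows that for every $x\ge0$,
\[
0\le \phi(x)\le \phi\!\left(\frac{c_1}{c_2}\right)=\left(\frac{c_1}{c_2\enumber}\right)^{c_1}.
\]
This gives an explicit bound.

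Alternatively, one can use $\enumber^{y}\ge y^{k}/k!$ for $y\ge0$, with an integer $k\ge c_1$, to see that $\phi(x)\to0$ as $x\to+\infty$, and then conclude by the extreme value theorem on $[0,R]$.

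There is no real obstacle here. The only care needed is at $x=0$ when $c_1<1$, where $\phi'$ may be unbounded. This does not matter, because the monotonicity argument is carried out on $(0,+\infty)$ and $\phi$ is continuous at $0$.
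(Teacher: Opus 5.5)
Your proposal is correct and follows the paper's proof, which just observes that the function attains its global maximum at $c_1/c_2$. Your derivative computation and the explicit bound $\bigl(c_1/(c_2\enumber)\bigr)^{c_1}$ fill in the details of that one-line argument.
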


\begin{proof}
Indeed, this function reaches its global maximum at the point $c_1/c_2$.
\end{proof}

In this section, we suppose that
$1\le j\le\log(n)$.
By~\eqref{eq:rough_bounds_for_ga},
we can assume that the variable $\ga$
satisfies
$\frac{1}{3}\le \ga\le \log\frac{1}{h}
+\frac{1}{2}$.

\begin{lem}
\label{lem:arcsin_p_q_power_near_zero}
As $h\to0$
and $1/3\le \ga\le \log(1/h)+1/2$,
the following asymptotic expansion holds:
\begin{equation}
\label{eq:arcsin_p_q_power_expansion_near_zero}
\begin{aligned}
    &\arcsin\bigl(p_1(\ga h)q(\ga h)^{1/h}\bigr)
    = \arcsin\left(\frac{1}{2}\enumber^{-\sqrt{3}\pi\ga}\right)
    + 
    \frac{2\pi^2\ga^2}{(4 \enumber^{2\sqrt{3}\pi\ga }-1)^{1/2}}\,h
    \\
    & \qquad
    +
    \frac{2(-\pi^2\ga^2 -\frac{2\sqrt{3}}{3}\pi^3\ga^3 + \pi^4\ga^4)}{(4 \enumber^{2\sqrt{3}\pi\ga }-1)^{1/2}}\,h^2
    +
    \frac{2\pi^4\ga^4}{(4\enumber^{2\sqrt{3}\pi\ga} -1)^{3/2}}h^2
    +
    O(\ga^6\,\enumber^{-\sqrt{3} \pi\ga}\,h^3).
\end{aligned}
\end{equation}
\end{lem}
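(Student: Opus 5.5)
We will expand the argument $y(\ga,h)\eqdef p_1(\ga h)\,q(\ga h)^{1/h}$ in powers of $h$ and then compose with the Taylor expansion of $\arcsin$. The plan has three steps.

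\textbf{Step 1: the exponential factor.} Write $q(\ga h)^{1/h}=\exp\bigl(h^{-1}\log q(\ga h)\bigr)$. Set $u(\al)\eqdef\log\frac{1}{q(\al)}=\log\bigl(2\cos(\frac{\pi}{3}-\pi\al)\bigr)$, which is analytic near $\al=0$ with $u(0)=0$. By direct differentiation,
\[
u'(0)=\pi\tan\tfrac{\pi}{3}=\sqrt{3}\pi,\qquad u''(0)=-\pi^2\sec^2\tfrac{\pi}{3}=-4\pi^2,
\]
and $u'''(0)=-8\sqrt3\pi^3$ (to be rechecked). Then
\[
\frac{u(\ga h)}{h}=\sqrt3\pi\ga-2\pi^2\ga^2h+c_3\ga^3h^2+O(\ga^4h^3).
\]
Exponentiating gives
\[
q^{1/h}=e^{-\sqrt3\pi\ga}\Bigl(1+2\pi^2\ga^2h+(2\pi^4\ga^4-c_3\ga^3)h^2+O(\ga^6h^3)\Bigr).
\]
The remainder is uniform because $\ga h\le h\log(1/h)+h/2\to0$, so the exponent correction $\ga^2h$ stays bounded.

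Next, expand $p_1(\ga h)=\frac12+p_1'(0)\ga h+\frac12 p_1''(0)\ga^2h^2+O(\ga^3h^3)$, where $p_1(0)=2\cdot\frac{\sqrt3}{2}/\sqrt{12}=\frac12$. Multiplying the two expansions yields
\[
y=\tfrac12 e^{-\sqrt3\pi\ga}\bigl(1+a_1(\ga)h+a_2(\ga)h^2\bigr)+O(\ga^6e^{-\sqrt3\pi\ga}h^3),
\]
with $a_1,a_2$ polynomials in $\ga$. In $a_1$, the linear term $p_1'(0)$ must combine so that the final coefficient is $2\pi^2\ga^2$. I expect $p_1'(0)=0$; this will be verified by computing $p_1'$ from $p_1=2\sin\be/\sqrt{9+\tan^2\be}$ at $\be=\pi/3$.

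\textbf{Step 2: the $\arcsin$.} Set $y_0=\frac12e^{-\sqrt3\pi\ga}\le\frac12$ and $\delta=y-y_0$, so that $\delta=O(\ga^4 e^{-\sqrt3\pi\ga}h)$. Taylor's theorem gives
\[
\arcsin y=\arcsin y_0+\frac{\delta}{\sqrt{1-y_0^2}}+\frac{y_0\,\delta^2}{2(1-y_0^2)^{3/2}}+O(\delta^3).
\]
The remainder is uniform because $y$ stays in $[0,1/2+o(1)]$, well away from $1$. Using $\sqrt{1-y_0^2}=\frac12 e^{-\sqrt3\pi\ga}\sqrt{4e^{2\sqrt3\pi\ga}-1}\,/y_0\cdot y_0$, we get
\[
\frac{y_0}{\sqrt{1-y_0^2}}=\frac{1}{(4e^{2\sqrt3\pi\ga}-1)^{1/2}}.
\]
The first-order term is therefore $2\pi^2\ga^2h/(4e^{2\sqrt3\pi\ga}-1)^{1/2}$, as claimed. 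At second order, the term $y_0a_2/\sqrt{1-y_0^2}$ produces the bracket $2(-\pi^2\ga^2-\frac{2\sqrt3}{3}\pi^3\ga^3+\pi^4\ga^4)$. The term $\frac{y_0^3a_1^2}{2(1-y_0^2)^{3/2}}=\frac{(2\pi^2\ga^2)^2}{2(4e^{2\sqrt3\pi\ga}-1)^{3/2}}$ gives $2\pi^4\ga^4/(4e^{2\sqrt3\pi\ga}-1)^{3/2}$.

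\textbf{Step 3: the remainder.} All error terms are of the form $\mathrm{poly}(\ga)\,e^{-\sqrt3\pi\ga}h^3$ with polynomial degree at most $6$. The $\delta^3$ term is even smaller, carrying $e^{-3\sqrt3\pi\ga}$. Since $\ga\ge1/3$, lower-degree powers of $\ga$ are dominated by $\ga^6$, so the total remainder is $O(\ga^6e^{-\sqrt3\pi\ga}h^3)$. Lemma~\ref{lem:power_by_exp} is not needed here, only the boundedness of the Taylor remainders of $u,p_1$ on $[0,1/3]$ and of $\arcsin''' $ on $[0,0.6]$.

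The main obstacle is the bookkeeping of the coefficient $a_2$. We must confirm that the contributions of $u'''(0)$, $p_1'(0)$, $p_1''(0)$ and the square of the $2\pi^2\ga^2$ term combine exactly into $-\pi^2\ga^2-\frac{2\sqrt3}{3}\pi^3\ga^3+\pi^4\ga^4$. I would carry this out by computing the Taylor series of $\log p_1(\al)-\frac{1}{\ga h}\cdots$ directly, i.e.\ expanding $\log y=\log p_1(\ga h)-u(\ga h)/h$ as a single series and exponentiating once. This keeps cross terms organized.
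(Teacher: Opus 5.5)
Your proposal follows the paper's proof essentially step for step. The paper also expands $\log q$ at the origin, substitutes $\alpha=\gamma h$, and exponentiates using that $\gamma^k h$ stays bounded. It then multiplies by $p_1(\alpha)=\frac12-\pi^2\alpha^2+O(\alpha^3)$, which confirms your expectation that $p_1'(0)=0$ and gives $\frac12 p_1''(0)=-\pi^2$. It finishes with the second-order Taylor expansion of $\arcsin$ at $\frac12 e^{-\sqrt3\pi\gamma}$.

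The only slip is the sign you flagged for rechecking. Since $u'''(\alpha)=2\pi^3\sec^2(\frac{\pi}{3}-\pi\alpha)\tan(\frac{\pi}{3}-\pi\alpha)$, you get $u'''(0)=+8\sqrt3\pi^3$ and hence $c_3=\frac{4\sqrt3}{3}\pi^3$. With this value, the bookkeeping yields exactly $a_2=2\bigl(-\pi^2\gamma^2-\frac{2\sqrt3}{3}\pi^3\gamma^3+\pi^4\gamma^4\bigr)$.
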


\begin{proof}
Using the Taylor--Maclaurin expansions
\begin{align*}
\cos\left(\frac{\pi}{3}-\pi\al\right)
&=
\frac{1}{2}
+ \frac{\sqrt{3}\,\pi}{2}\,\al
-\frac{\pi^2}{4}\,\al^2
-\frac{\sqrt{3}\,\pi^3}{12}\,\al^3
+O(\al^4),
\\[0.5ex]
\sin\left(\frac{\pi}{3}-\pi\al\right)
&=
\frac{\sqrt{3}}{2}
-\frac{\pi}{2}\,\al
-\frac{\sqrt{3}\,\pi^2}{4}\,\al^2
+\frac{\pi^3}{12}\,\al^3
+O(\al^4),
\\[0.5ex]
\tan\left(\frac{\pi}{3}-\pi\al\right)
&=
\sqrt{3}
-4\pi\,\al
+4\sqrt{3}\,\pi^2\,\al^2
-\frac{40\pi^3}{3}\,\al^3
+O(\al^4),
\end{align*}
we easily get expansions of $p_1$ and $\log q$ around the origin:
\begin{equation}
\label{eq:p1_expansion}
p_1(\al)
=\frac{1}{2}
-\pi^2\,\al^2
+\frac{2\sqrt{3}\,\pi^3}{3}\,\al^3
+O(\al^4),
\end{equation}
\[
\log(q(\al))
=
- \sqrt{3}\,\pi\,\al
+ 2\pi^2\,\al^2
- \frac{4\sqrt{3}\,\pi^3}{3}\,\al^3
+ O(\al^4).
\]
In the last expansion,
we substitute $\ga h$ instead of $\al$.
Then, we divide both sides by $h$:
\[
\frac{1}{h}
\log(q(\ga h))
=
-\sqrt{3}\,\pi\,\ga
+ 2\pi^2\,\ga^2\,h
- \frac{4\sqrt{3}\,\pi^3}{3}\,\ga^3\,h^2
+ O(\ga^4\,h^3)
=
-\sqrt{3}\,\pi\,\ga
+ t.
\]
Here, for brevity, we have set
\[
t \eqdef
2\pi^2\,\ga^2\,h
- \frac{4\sqrt{3}\,\pi^3}{3}\,\ga^3\,h^2
+ O(\ga^4\,h^3).
\]
The assumption $\ga\le\log(1/h)+1/2$
implies that for every natural $k$,
the product $\ga^k h$ is bounded.
Therefore,
\begin{align*}
t^2
&=
4\pi^4\ga^4\,h^2
-\frac{16\sqrt{3}\,\pi^5}{3}\,\ga^5\,h^3
+\frac{16\pi^6}{3}\,\ga^6\,h^4
+O(\ga^6\,h^4)
+O(\ga^7\,h^5)
+O(\ga^8\,h^6)
\\
&=
4\pi^4\ga^4\,h^2
+O(\ga^5 h^3).
\end{align*}
Furthermore, using the same fact that 
$\ga^k h = O(1)$ for every natural $k$,
we easily get
\[
t = O(\ga^2 h), \qquad 
t^2 = O(\ga^4 h^2), \qquad 
O(t^3) = O(\ga^6\,h^3).
\]
Using these expansions and upper estimates of $t$, $t^2$, and $O(t^3)$,
we get an asymptotic expansion of $\exp(t)$:
\begin{align*}
\exp(t)
& = 1 + t + \frac{t^2}{2} + O(t^3)
\\
&
=1
+2\pi^2\ga^2\,h
-\frac{4\sqrt{3}\,\pi^3}{3}\ga^3\,h^2
+2\pi^4\ga^4\,h^2
+O(\ga^6 h^3).
\end{align*}
Now, we are ready to consider the power
$q(\ga h)^{1/h}$:
\begin{align}
q(\ga h)^{1/h}
&=
\exp\left(\frac{1}{h}\log(q(\ga h))\right)
=
\enumber^{-\sqrt{3}\,\pi\,\ga}
\exp(t)
\notag
\\
\label{q_power}
&=
\enumber^{-\sqrt{3}\,\pi\,\ga}
\left(
1
+2\pi^2\ga^2\,h
-\frac{4\sqrt{3}\,\pi^3}{3}\ga^3\,h^2
+2\pi^4\ga^4\,h^2
+O(\ga^6 h^3)
\right).
\end{align}
Using~\eqref{eq:p1_expansion}
and~\eqref{q_power},
after a simplification we get
\begin{equation}
\label{eq:pq_power}
p_1(\ga h)q(\ga h)^{1/h}  = \enumber^{-\sqrt{3}\,\pi\,\ga}
\left( \frac{1}{2} + \pi^2 \ga^2\,h + 
\left(- \pi^2\,\ga^2 -\frac{2\sqrt{3}}{3}\pi^3\ga^3 + \pi^4\ga^4\right) h^2
 + O(\ga^6 \,h^3)\right).
\end{equation}
We rewrite~\eqref{eq:pq_power}
in the form
$p_1(\ga h)q(\ga h)^{1/h}  
= \frac{1}{2}\enumber^{-\sqrt{3}\,\pi\,\ga} + u$,
where 
\begin{equation}
u \eqdef
\enumber^{-\sqrt{3}\,\pi\,\ga} \left( 
\pi^2 \ga^2\,h + 
\left(- \pi^2\,\ga^2 -\frac{2\sqrt{3}}{3}\pi^3\ga^3 + \pi^4\ga^4\right)
h^2 \right)
+ O(\ga^6 \, \enumber^{-\sqrt{3} \pi\ga}\,h^3).
\end{equation}
It is easy to see that
\[
u^2
=
\enumber^{-2\sqrt{3}\,\pi\,\ga} \pi^4\ga^4\,h^2 + O(\ga^6\,\enumber^{-2\sqrt{3} \pi\ga}\,h^3),
\]
\[
u = O(\ga^2\,\enumber^{-\sqrt{3} \pi\ga}\,h),
\qquad 
O(u^3) = O(\ga^4\,\enumber^{-3\sqrt{3} \pi\ga}\,h^3).
\]
Finally, we apply the Taylor expansion of
$\arcsin$:
\begin{equation}
\label{eq:arcsin_general_expansion}
\arcsin(c + u)
= \arcsin(c) + \frac{1}{(1 - c^2)^{1/2}} u + \frac{c}{2(1-c^2)^{3/2}}u^2 + O(u^3),
\end{equation}
with
$c=\frac{1}{2}\enumber^{-\sqrt{3}\pi\ga}$,
and after simplifying we get~\eqref{eq:arcsin_p_q_power_expansion_near_zero}.
\end{proof}

Figure~\ref{fig:approx_error_term}
shows the behavior of the error term in~\eqref{eq:arcsin_p_q_power_expansion_near_zero}.

\begin{figure}[htb]
\includegraphics{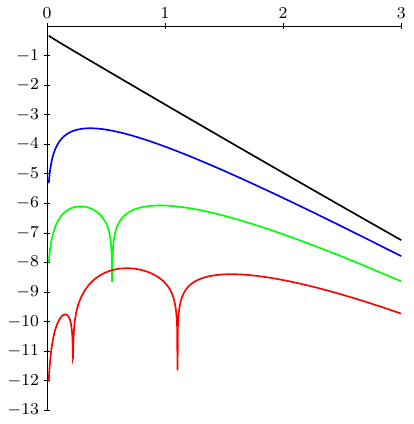}
\quad
\includegraphics{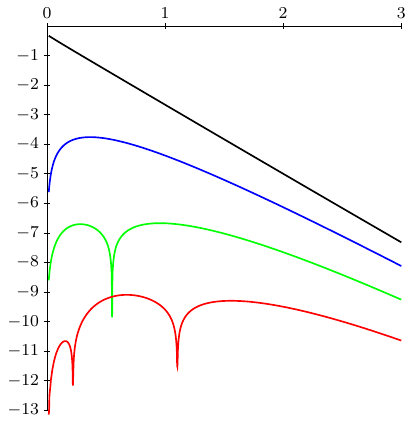}
\caption{Approximation of the function
$\ga\mapsto\arcsin\left(p_1(\ga h)p(\ga h)^{1/h}\right)$,
where $h=1/(n+2)$,
for $n=512$ (left) and $n=1024$ (right).
For the vertical axis,
we use the logarithmic scale ($\log_{10}$).
We show the exact value (black),
the error of the approximation with $1$ exact term
(blue),
$2$ exact terms (green),
and $3$ exact terms (red).
\label{fig:approx_error_term}
}
\end{figure}

\begin{lem}
\label{lem:transcendental_equation_contractive}
Let $r\in\{0,1,2\}$ and $j\in\bN$.
Define
$u_{r,j}\colon
\left[\frac{1}{3},+\infty\right)
\to\left[\frac{1}{3},+\infty\right)$
by
\begin{equation}
\label{eq:u_def}
u_{r,j}(y)
\eqdef
j-\frac{1}{2}+\frac{r}{3}
+\frac{(-1)^{r+j+1}}{\pi}
\arcsin\left(\frac{1}{2}\enumber^{-\sqrt{3}\,\pi\,y}\right).
\end{equation}
Then, $u_{r,j}$ is contractive,
and its Lipschitz coefficient is less than $1/7$.
\end{lem}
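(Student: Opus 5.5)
We need to show two things: that $u_{r,j}$ maps $[1/3,+\infty)$ into itself, and that it is Lipschitz with a coefficient below $1/7$ there. Then the Banach fixed point theorem applies on the complete space $[1/3,+\infty)$.

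For the mapping property, note that for $y\ge 1/3$ we have $0<\frac12\enumber^{-\sqrt3\pi y}\le\frac12$. Hence
\[
0<\frac1\pi\arcsin\left(\frac12\enumber^{-\sqrt3\pi y}\right)\le\frac16 .
\]
Consequently
\[
u_{r,j}(y)\ge j-\frac12+\frac r3-\frac16\ge 1-\frac23=\frac13 ,
\]
using $j\ge1$ and $r\ge0$. So $u_{r,j}(y)\in[1/3,+\infty)$.

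For the Lipschitz bound, I would differentiate directly. Put $w(y)\eqdef\frac12\enumber^{-\sqrt3\pi y}$. Then
\[
u_{r,j}'(y)=\frac{(-1)^{r+j+1}}{\pi}\cdot\frac{w'(y)}{\sqrt{1-w(y)^2}}
=\frac{(-1)^{r+j}\sqrt3\,w(y)}{\sqrt{1-w(y)^2}} .
\]
The function $w\mapsto w/\sqrt{1-w^2}$ is increasing in $w$, and $w$ is decreasing in $y$. So $|u_{r,j}'|$ attains its supremum at $y=1/3$, where
\[
w_0\eqdef w(1/3)=\frac12\enumber^{-\pi/\sqrt3}.
\]
The main (and essentially only) obstacle is the numerical check that $\sqrt3\,w_0/\sqrt{1-w_0^2}<1/7$. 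Since $\pi/\sqrt3>1.8$, we have $\enumber^{-\pi/\sqrt3}<\enumber^{-1.8}<0.166$, so $w_0<0.083$. Then $\sqrt{1-w_0^2}>0.996$, and therefore
\[
|u_{r,j}'|\le\frac{\sqrt3\cdot0.083}{0.996}<0.145 .
\]
This is not below $1/7\approx0.1429$, so the crude rounding is not enough and the estimate needs sharpening. Using $\pi/\sqrt3>1.8137$ gives $\enumber^{-1.8137}<0.1631$, so $w_0<0.08155$ and $\sqrt3\,w_0<0.14125$. Dividing by $\sqrt{1-w_0^2}>0.9966$ gives less than $0.1418<1/7$. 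I would present this with explicit rational bounds on $\pi$, $\sqrt3$ and $\enumber$ so that each inequality is verifiable.

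Finally, the mean value theorem converts the bound on $u_{r,j}'$ into $|u_{r,j}(y_1)-u_{r,j}(y_2)|\le\frac17|y_1-y_2|$, which proves that $u_{r,j}$ is contractive. As a consequence, equation~\eqref{eq:ph_equation} has a unique solution $\ph_{r,j}$ in $[1/3,+\infty)$, and it can be computed by simple iteration.

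Uniqueness on all of $[0,+\infty)$, as stated in the definition of $\ph_{r,j}$, also follows. Any solution $\ga\ge0$ of~\eqref{eq:ph_equation} satisfies $\ga\ge j-\frac12-\frac16\ge\frac13$ by the same arcsine bound, since $\frac12\enumber^{-\sqrt3\pi\ga}\le\frac12$ for every $\ga\ge0$. So every solution already lies in $[1/3,+\infty)$, where the contraction gives uniqueness.
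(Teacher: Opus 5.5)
Your proposal is correct and follows essentially the same route as the paper. Both arguments bound the arcsine term by $\pi/6$ to get the self-mapping property. Both compute $u_{r,j}'(y)=(-1)^{r+j}\sqrt3/\sqrt{4\enumber^{2\sqrt3\pi y}-1}$, maximize its absolute value at $y=1/3$, and check numerically that this maximum, about $0.1417$, is below $1/7$. Your extra care pays off in two places the paper leaves implicit: you catch that the crude rounding gives only $0.145$ and sharpen it, and you observe that every nonnegative solution of \eqref{eq:ph_equation} already lies in $[1/3,+\infty)$, so uniqueness on $[0,+\infty)$ follows.
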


\begin{proof}
Since $j\ge 1$ and
\[
0
\le
\arcsin\left(\frac{1}{2}\enumber^{-\sqrt{3}\,\pi\,y}\right)
\le
\arcsin\frac{1}{2}=\frac{\pi}{6},
\]
it is easy to see that indeed
$u_{r,j}(y)\ge\frac{1}{3}$ for every $y$.
The derivative of $u_{r,j}$ is
\[
u_{r,j}'(y)
= \frac{(-1)^{r+j}\,\sqrt{3}\,\enumber^{-\sqrt{3}\,\pi y}}%
{2\sqrt{1-\frac{1}{4}\enumber^{-2\sqrt{3}\,\pi y}}}
= \frac{(-1)^{r+j}\,\sqrt{3}}%
{\sqrt{4\enumber^{2\sqrt{3}\,\pi y}-1}}.
\]
It can be absolutely bounded in the following way:
\[
|u_{r,j}'(y)|
\le
\left|u_{r,j}'\left(\frac{1}{3}\right)\right|
=
\frac{\sqrt{3}}{\sqrt{4\enumber^{2\pi\sqrt{3}/3}-1}}
<
0.142
< \frac{1}{7}.
\qedhere
\]
\end{proof}

We denote the fixed point of $u_{r,j}$
by $\ph_{r,j}$.

\begin{rem}
\label{rem:transcendental_equation}
The equation for $\ph_{r,j}$ can be rewritten in the following form, with trigonometric and exponential terms:
\begin{equation}
\label{eq:ph_equation_trig_exp}
\cos\left(\pi y-\frac{\pi r}{3}\right)
=\frac{(-1)^{r+1}}{2}\enumber^{-\sqrt{3}\,\pi\,y}.
\end{equation}
Obviously, \eqref{eq:ph_equation_trig_exp}
has an infinite set of positive solutions;
between them, $\ph_{r,j}$ is the closest one to the point
$j-\frac{1}{2}+\frac{r}{3}$.
The following idea was suggested to us on a math forum, see
\cite{Stackexchange_answer}.
Making a complex exponential change of variables,
it is possible to transform~\eqref{eq:ph_equation_trig_exp}
to the form
\begin{equation}
\label{eq:Belkic}
x - b x^a - 1 = 0,
\end{equation}
where $a$ and $b$ are some complex constants.
Belki\'{c}~\cite[Section~8]{Belkic2019}
expressed the solutions of~\eqref{eq:Belkic}
in terms of a certain power series
related to the confluent Fox--Wright function.
Since the change of variables between~\eqref{eq:ph_equation_trig_exp}
and~\eqref{eq:Belkic} is not quite simple,
we are not developing this idea
in the present paper.
It would be interesting to
solve the equations of the form
$\exp(-ay)\cos(y)=b$
or $\exp(-ay)\sin(y)=b$
in terms of some power series,
without making complex changes of variables.
\end{rem}

Recall that the functions
$\om_{r,j,0}$, $\om_{r,j,1}$, and $\om_{r,j,2}$ are defined by~\eqref{eq:om0_def}, \eqref{eq:om1_def}, and \eqref{eq:om2_def}, respectively.

\begin{lem}[asymptotic expansion of $\Omega_{r,j}$]
\label{lem:Omega_expansion}
For every $r$ in $\{0,1,2\}$,
every $j$ in $\bN$,
every $h>0$ and $\ga$ satisfying
$\frac{1}{3}\le \ga\le j+\frac{1}{2}$,
the function $\Omega_{r,j}$ can be written in the form 
\[
\Omega_{r,j}(\ga,h)
=\om_{r,j,0}(\ga)
+\om_{r,j,1}(\ga)h
+\om_{r,j,2}(\ga)h^2
+R_{r,j}(\ga,h),
\]
where $R_{r,j}(\ga,h)=O(j^3 h^3)$.
\end{lem}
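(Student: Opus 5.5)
The plan is to split $\Omega_{r,j}$ into its two $h$-dependent pieces and expand each separately:
\[
\Omega_{r,j}(\ga,h)=\Bigl(\ga-j+\tfrac12-\tfrac r3\Bigr)+\Bigl(\tht(\ga h)-\tfrac{\ga h}{2}\Bigr)+\frac{(-1)^{r+j}}{\pi}\arcsin\bigl(p_1(\ga h)q(\ga h)^{1/h}\bigr).
\]
The first bracket does not depend on $h$ and goes entirely into $\om_{r,j,0}$.

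For the $\tht$ part, I will use Lemma~\ref{lem:approximation_theta_near_origin}. Since $\tht$ is analytic near $[0,1/3]$ (Proposition~\ref{prop:theta}), the remainder $O(\al^3)$ there is uniform on $[0,1/3]$ by Taylor's theorem with a bounded third derivative. Putting $\al=\ga h$ gives
\[
\tht(\ga h)-\frac{\ga h}{2}=-\frac{2\sqrt3}{3}\pi\ga^2h^2+O(\ga^3h^3).
\]
Since $\ga\le j+\frac12\le\frac32 j$, this is $-\frac{2\sqrt3}{3}\pi\ga^2h^2+O(j^3h^3)$. This accounts exactly for the non-oscillating term $-\frac{2\sqrt3}{3}\pi\ga^2$ in $\om_{r,j,2}$.

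For the arcsine part, I will use Lemma~\ref{lem:arcsin_p_q_power_near_zero}. Multiplying its expansion by $(-1)^{r+j}/\pi$ reproduces the remaining terms:
\begin{itemize}
\item the $h^0$ term gives the arcsine term of $\om_{r,j,0}$;
\item the coefficient $2\pi^2\ga^2(\cdots)^{-1/2}/\pi=2\pi\ga^2(\cdots)^{-1/2}$ is $\om_{r,j,1}$;
\item the two $h^2$ terms, divided by $\pi$, are the oscillating part of $\om_{r,j,2}$.
\end{itemize}
The remainder is $O(\ga^6\enumber^{-\sqrt3\pi\ga}h^3)$. By Lemma~\ref{lem:power_by_exp}, $\ga^6\enumber^{-\sqrt3\pi\ga}$ is bounded on $[0,\infty)$, so this remainder is $O(h^3)$, hence $O(j^3h^3)$ since $j\ge1$.

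The main obstacle is the hypotheses. Lemma~\ref{lem:arcsin_p_q_power_near_zero} was stated for $h\to0$ and $\ga\le\log(1/h)+\frac12$, whereas here $h>0$ is arbitrary (within the domain of $\Omega_{r,j}$) and $\ga\le j+\frac12$ with $j$ arbitrary. I will handle this in two regimes:
\begin{enumerate}
\item \emph{Regime $\ga h\le c$ for a small fixed $c$.} Here I will redo the proof of Lemma~\ref{lem:arcsin_p_q_power_near_zero}, but keep the remainders in the form $\ga^kh^k$ (for example $t=O(\ga^2h)$ with all Taylor remainders uniform on $[0,c]$) instead of invoking $\ga^kh=O(1)$. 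The result is an error of the shape $\enumber^{-\sqrt3\pi\ga}\,O\bigl(\ga^6h^3+\ga^{N}h^3\bigr)$ with exponential prefactor. The exponential term $\enumber^{t}$ with $t=O(\ga^2h)$ may grow, but $q(\al)^{1/h}\le\enumber^{-c'\ga}$ holds uniformly by the concavity bound of Lemma~\ref{lem:log_q_denom_lower_bound} (extended to $[0,1/3]$ with a smaller constant). That decay absorbs any polynomial in $\ga$ via Lemma~\ref{lem:power_by_exp}, leaving $O(h^3)$.
\item \emph{Regime $\ga h> c$.} Here $1\le\ga h/c$, so $j^3h^3\gtrsim c^3$. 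Every term in the statement is bounded: the arcsine part and the $\tht$ part are bounded, and $\om_{r,j,1}h$ and $\om_{r,j,2}h^2$ are bounded by Lemma~\ref{lem:power_by_exp}. So $R_{r,j}=O(1)=O(j^3h^3)$ trivially.
\end{enumerate}
Combining the two regimes gives $R_{r,j}(\ga,h)=O(j^3h^3)$ uniformly in $r$, $j$, $h$, and $\ga\in[\frac13,j+\frac12]$.
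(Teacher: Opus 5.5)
Your proposal is correct and is essentially the paper's proof. You use the same split into the $\tht$-part, handled by Lemma~\ref{lem:approximation_theta_near_origin} together with $\ga=O(j)$, and the arcsine part, handled by Lemma~\ref{lem:arcsin_p_q_power_near_zero}, with Lemma~\ref{lem:power_by_exp} turning $O(\ga^6\enumber^{-\sqrt3\pi\ga}h^3)$ into $O(h^3)$.

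Your two-regime treatment of the hypotheses goes beyond the paper, which silently relies on the section's standing assumption $j\le\log n$ (so that $\ga\le\log(1/h)+\frac12$). The argument is sound, but the split is unnecessary: the Taylor remainders of $\log q$, $p_1$ and $\tht$ are uniform on all of $[0,\frac13]$, and concavity gives $q(\al)^{1/h}\le 2^{-3\ga}$ on the whole interval. One small correction: the term $-\frac{2\sqrt3}{3}\pi\ga^2h^2$ in $\om_{r,j,2}h^2$ is bounded because $\ga h\le\frac13$, not by Lemma~\ref{lem:power_by_exp}.
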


\begin{proof}
Applying~Lemma~\ref{lem:approximation_theta_near_origin} and taking into account that $\ga=O(j)$,
we get
\begin{equation}
\label{eq:tht_expansion_in_terms_of_ga}
\tht(\ga h)-\frac{\ga h}{2}
=
-\frac{2\sqrt{3}}{3}\pi \ga^2 h^2
+ O(\ga^3 h^3)
=
-\frac{2\sqrt{3}}{3}\pi \ga^2 h^2
+ O(j^3 h^3).
\end{equation}
Furthermore, due to Lemma~\ref{lem:power_by_exp},
the residue term in Lemma~\ref{lem:arcsin_p_q_power_near_zero} can be written as $O(h^3)$.
Therefore,
Lemma~\ref{lem:Omega_expansion} follows from~\eqref{eq:tht_expansion_in_terms_of_ga}
and~Lemma~\ref{lem:arcsin_p_q_power_near_zero}.
\end{proof}

In what follows, we use the first derivatives of $\om_{r,j,1}$ and $\om_{r,j,2}$.
They can be computed explicitly:
\begin{align}
\label{eq:om_0_D1}
\om_{r,j,0}'(\ga)
&=
1 - \frac{(-1)^{r+j}\sqrt{3}}{(4 \enumber^{2\sqrt{3}\pi\ga }-1)^{1/2}},
\\[1ex]
\label{eq:om_0_D2}
\om_{r,j,0}''(\ga)
&=
\frac{(-1)^{r+j}12\pi\enumber^{2\sqrt{3}\pi\ga}}{(4 \enumber^{2\sqrt{3}\pi\ga }-1)^{3/2}},
\\[1ex]
\label{eq:om_1_D1}
\om_{r,j,1}'(\ga)
&= \frac{(-1)^{r+j}4\pi}{(4 \enumber^{2\sqrt{3}\pi\ga }-1)^{1/2}}\left(\ga - \frac{2\sqrt{3}\pi\enumber^{2\sqrt{3}\pi\ga}}{4 \enumber^{2\sqrt{3}\pi\ga }-1} \ga^2 \right).
\end{align}

\begin{proof}[Proof of Theorem~\ref{thm:lambda_asympt_close_to_the_origin}.]
By Lemmas~\ref{lem:transcendental_equation_contractive}
and~\ref{lem:Omega_expansion},
we can apply the scheme from Proposition~\ref{prop:scheme_solve_equations_by_asymptotic_expansions3}.
Substituting $h=1/(n+2)$, we get
\begin{equation}
\label{eq:ga_asympt_expansion}
\ga_{n,j}
=\ph_{r,j}
+\frac{\psi_{r,j}}{n+2}
+\frac{\tau_{r,j}}{(n+2)^2}
+R_{3,n,j},
\end{equation}
where $\ph_{r,j},\psi_{r,j},\tau_{r,j}$
are defined before Theorem~\ref{thm:lambda_asympt_close_to_the_origin}
and
$R_{3,n,j}=O(j^3/(n+2)^3)$.
Let us explain this estimate of the residue term.
In our situation, the coefficients $c_1$, $c_2$, etc. in Proposition~\ref{prop:scheme_solve_equations_by_asymptotic_expansions3}
depend on the parameters $r$ and $j$
and can be bounded as
\[
c_{r,j,0,2}
=O(\enumber^{-\sqrt{3}\,\pi\,j}),
\quad
c_{r,j,0,3}
=O(\enumber^{-\sqrt{3}\,\pi\,j}),
\quad
c_{r,j,1,0}
=O(\enumber^{-\sqrt{3}\,\pi\,j}),
\quad
\text{etc.},
\]
\[
c_{r,j,3}=O(j^3),\qquad
c_{r,j,2}=O(j^3),\qquad
c_{r,j,1}=O(j^3),\qquad
k_{r,j}=O(j^3).
\]
By Lemma~\ref{lem:power_by_exp},
most of the products on the right-hand side of~\eqref{eq:eta3_bound}
admit uniform bounds,
and the most important term is
$c_{r,j,3}/\ell=O(j^3)$.

Dividing~\eqref{eq:ga_asympt_expansion} by $n+2$ we obtain
\[
\al_{n,j}
=
\frac{\ph_{r,j}}{n+2}
+\frac{\psi_{r,j}}{(n+2)^2}
+\frac{\tau_{r,j}}{(n+2)^3}
+O\left(\frac{j^3}{(n+2)^4}\right).
\]
Since $\la_{n,j}=G(\al_{n,j})$,
we get the result of Theorem~\ref{thm:lambda_asympt_close_to_the_origin}.
\end{proof}

\section{Numerical tests}
\label{sec:numerical_tests}

We have thoroughly tested all theorems from Section~\ref{sec:intro}
in SageMath~\cite{Sagemath}.
Our programs are freely available at GitHub, see~\cite{testtoeplitz1001}.

To verify Proposition~\ref{prop:charpol_coefficients_explicit} for moderate values of $n$,
we have used symbolic computations with univariate polynomials.
For other results, we used numerical computations with multiprecision arithmetic.

For all $n$ satisfying
$3\le n\le 2048$,
and all $j$ in $\{1,\ldots,m\}$,
we have calculated the eigenvalues $\la_{n,j}$ and the eigenvectors $v_{n,j}$,
using multiprecision computations with 512 binary digits
(approximately 154 decimal digits),
by applying the fixed point iteration method
(Theorem~\ref{thm:char_equation_as_fixed_point}) and formula~\eqref{eq:main_eigenvector},
and we have obtained small residues for the eigenpairs:
\begin{equation}
\label{eq:eigenpairs_numerical_test}
\max_{3\le n\le 2048}\
\max_{1\le j\le m}\
\frac{\|T_n(a) v_{n,j}-\la_{n,j} v_{n,j}\|_2}{\|v_{n,j}\|_2}
< 1.2 \cdot 10^{-148}.
\end{equation}

Next, we present some numerical tests for the asymptotic formulas.
Notice that general eigensolvers,
used with the machine precision arithmetic
or even with the multiprecision arithmetic,
typically give wrong results
for the matrices $T_n(a)$
of large order $n$;
see Figure~\ref{fig:symbol_and_wrong_eigenvalues_1024}.
Therefore, relying on~\eqref{eq:eigenpairs_numerical_test}, we treat the eigenvalues computed by Theorem~\ref{thm:char_equation_as_fixed_point} as the ``exact eigenvalues''
$\la_{n,j}$ in the tests below.

\begin{figure}[hbt]
\centering
\includegraphics{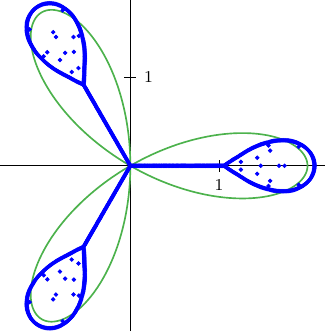}
\caption{The curve $\{a(t)\colon\ |t|=1\}$
(green)
and the wrong eigenvalues of $T_{1024}(a)$ 
(blue)
computed by a general eigensolver with multiprecision (512 bits).
\label{fig:symbol_and_wrong_eigenvalues_1024}}
\end{figure}

For each $n\ge 3$, we denote by $R_{1,n}$
the maximum absolute errors in
Theorem~\ref{thm:lambda_asympt_not_too_close_to_the_origin}
and~\ref{thm:lambda_asympt_close_to_the_origin}:
\[
R_{1,n}
\eqdef \max_{\lfloor \log n\rfloor \le j \le m}
|R_{1,n,j}|,
\qquad
R_{2,n}
\eqdef \max_{1\le j\le \lfloor \log n\rfloor}
|R_{2,n,j}|.
\]
We also consider the following ``maximum normalized errors'':
\[
\widetilde{R}_{1,n}
\eqdef \left(n+\frac{3}{2}\right)^3 R_{1,n},
\qquad
\widetilde{R}_{2,n}
\eqdef
\max_{1\le j\le \lfloor\log n\rfloor}
\frac{(n+2)^4\,|R_{2,n,j}|}{j^3}.
\]
Table~\ref{tab:errors} shows the values of $R_{1,n}$ and $R_{2,n}$ for some values of $n$.
In this table,
the values of $\widetilde{R}_{1,n}$
are bounded by $0.01$,
while the values of $\widetilde{R}_{2,n}$
are bounded by $228$.
In fact, these upper bounds hold not only for the values $n$ included in the table but for all $n$ with $3\le n\le 2048$.
Thereby, we have numerically verified
Theorems~\ref{thm:lambda_asympt_not_too_close_to_the_origin}
and~\ref{thm:lambda_asympt_close_to_the_origin}
for moderate values of $n$.
It is surprising for us
that the values of $\widetilde{R}_{1,n}$ are so small
and the values of $\widetilde{R}_{2,n}$ are much bigger, although bounded.
The main reason for the first phenomenon is that the derivatives of $\tht$ and
$\expterm_{n,j}$ are quite small
outside some neighborhood of $0$.
For the second phenomenon,
the main reason is that the derivatives of the functions $\tht$ and $\om_{r,j,k}$
(defined by~\eqref{eq:def_theta}
and~\eqref{eq:om0_def}--\eqref{eq:om2_def})
at the point $0$ are quite large:
$|\tht'''(0)|=|\om_{r,j,0}'''(0)|
\approx 79$.

\renewcommand{\arraystretch}{1.3}
\begin{table}[htb]
\[
\begin{array}{c|c|c|c|c|c}
 &
n=128 &
n=256 &
n=512 &
n=1024 &
n=2048
\\\hline
R_{1,n} &
2.86 \cdot 10^{-9} &
4.97 \cdot 10^{-10} &
6.71 \cdot 10^{-11} &
8.44 \cdot 10^{-12} &
1.06 \cdot 10^{-12}
\\
\widetilde{R}_{1,n} &
5.99 \cdot 10^{-3} &
8.33 \cdot 10^{-3} &
9.01 \cdot 10^{-3} &
9.06 \cdot 10^{-3} &
9.09 \cdot 10^{-3}
\\\hline
R_{2,n} &
2.95 \cdot 10^{-5} &
3.23 \cdot 10^{-6} &
4.45 \cdot 10^{-7} &
2.47 \cdot 10^{-8} &
2.92 \cdot 10^{-9}
\\
\widetilde{R}_{2,n} &
2.12 \cdot 10^2 &
1.14 \cdot 10^2 &
2.23 \cdot 10^2 &
1.27 \cdot 10^2 &
2.27 \cdot 10^2
\end{array}
\]
\caption{Maximum errors and ``normalized maximum errors''
in Theorems~\ref{thm:lambda_asympt_not_too_close_to_the_origin}
and~\ref{thm:lambda_asympt_close_to_the_origin}.
\label{tab:errors}
}
\end{table}

Figures~\ref{fig:errors_asympt_512}
and~\ref{fig:errors_asympt_1024_4096}
show the logarithmic errors of the asymptotic formulas in Theorems~\ref{thm:lambda_asympt_not_too_close_to_the_origin}
and \ref{thm:lambda_asympt_close_to_the_origin},
for some values of $n$.
We see that the asymptotic formula from Theorem~\ref{thm:lambda_asympt_not_too_close_to_the_origin} does not work well for the first values of $j$,
and the asymptotic formula from Theorem~\ref{thm:lambda_asympt_close_to_the_origin} is much better for the first values of $j$.

\begin{figure}[htb]
\centering
\includegraphics{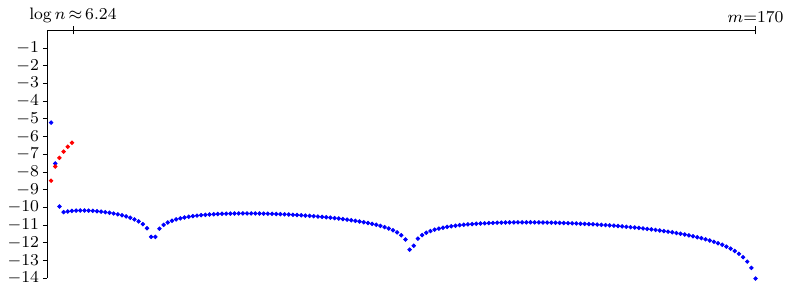}
\caption{$\log_{10}|R_{1,512,j}|$ for $1\le j\le m$ (blue points) and $\log_{10}|R_{2,512,j}|$ for $1\le j\le \log n$ (red points).
\label{fig:errors_asympt_512}}
\end{figure}

\begin{figure}[htb!]
\centering
\begin{minipage}[t]{6.5cm}
\vspace{0pt}
\includegraphics{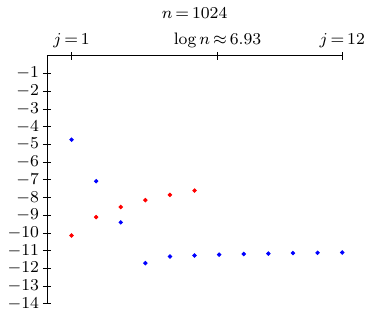}
\end{minipage}
\qquad
\begin{minipage}[t]{8cm}
\vspace{0pt}
\includegraphics{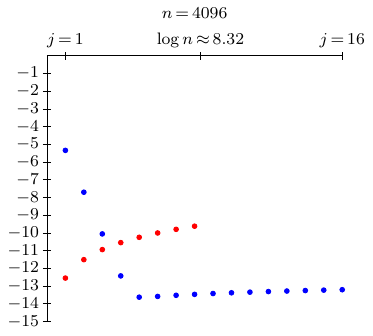}
\end{minipage}
\caption{$\log_{10}|R_{1,n,j}|$
(blue points)
and $\log_{10}|R_{2,n,j}|$ (red points)
for $n=1024$ (left) and $n=4096$ (right),
for small values of $j$.
\label{fig:errors_asympt_1024_4096}}
\end{figure}

\section*{Data availability}

No data are associated
with this theoretical research.

\section*{Funding}

The first and third authors have been supported by SECIHTI Project (Mexico) ``Ciencia de Frontera''
FORDECYT-PRONACES/61517/2020.
The first author has also been supported by
the Ministry of Science and Higher Education of Russia,
Agreement 075-02-2026-1316.
Moreover, the third author has been  supported by IPN-SIP projects
(Instituto Polit\'{e}cnico Nacional, Mexico).

\smallskip
\section*{Information about the authors}

\medskip\noindent
Sergei M. Grudsky
(\myurl{https://orcid.org/0000-0002-3748-5449}),\\
email: grudsky@math.cinvestav.mx.
\\
Cinvestav,
Departamento de Matem\'{a}ticas
(Ciudad de M\'{e}xico, Postal Code 07360, Mexico);
Regional Mathematical Center,
Southern Federal University
(Bol’shaya Sadovaya Ulitsa 105/42,
Postal Code 344006,
Rostov-on-Don, Russia).

\bigskip\noindent
Rom\'{a}n Higuera-Garc\'{i}a
(\myurl{https://orcid.org/0009-0009-8644-2160}),
\\
email: rhiguerag@ciencias.unam.mx.
\\
Undergraduate student in Applied Mathematics at:
Universidad Nacional Aut\'{o}noma de M\'{e}xico (UNAM), Facultad de Ciencias
(Ciudad de M\'{e}xico, Postal Code 04510, Mexico).

\bigskip\noindent
Egor A. Maximenko
(\href{https://orcid.org/0000-0002-1497-4338}{https://orcid.org/0000-0002-1497-4338}),
\\
email: emaximenko@ipn.mx, egormaximenko@gmail.com.
\\
Instituto Polit\'{e}cnico Nacional,
Escuela Superior de F\'{i}sica y Matem\'{a}ticas
(Ciudad de M\'{e}xico,
Postal Code 07738, Mexico).

\bigskip\noindent
Fidel V\'{a}squez-Rojas
(\myurl{https://orcid.org/0009-0001-9221-0835}),
\\
email: fidel.vqzrojas@gmail.com.
\\
Graduate of Ph.D. program at:
Cinvestav,
Departamento de Matem\'{a}ticas
(Ciudad de M\'{e}xico, Postal Code 07360, Mexico).


\begin{thebibliography}{99}

\bibitem{Alexandersson2012}
Alexandersson, P. (2012):
Schur polynomials, banded Toeplitz matrices and Widom’s formula.
\textit{Electron. J. Combin.}
19:4, P22, \doi{10.37236/2651}.

\bibitem{BarreraBoettcherGrudskyMaximenko2018}
Barrera, M.; B\"{o}ttcher, A.; Grudsky, S.M.; Maximenko, E.A. (2018):
Eigenvalues of even very nice Toeplitz matrices can be unexpectedly erratic.
\textit{Oper. Theory: Adv. and Appl.} 268, 51--77,
\doi{10.1007/978-3-319-75996-8\_2}.

\bibitem{BatalshchikovGrudskyMalishevaMihalkovichRamirezStukopin2019}
Batalshchikov, A.A.;
Grudsky, S.M.;
Malisheva, I.S.;
Mihalkovich, S.S.;
Ram\'{i}rez de Arellano, E.;
Stukopin, V.A.
(2019):
Asymptotics of eigenvalues of large symmetric Toeplitz matrices with smooth simple-loop symbols.
\textit{Linear Algebra Appl.}
580, 292--335,
\doi{10.1016/j.laa.2019.06.017}.

\bibitem{BaxterSchmidt1961}
Baxter, G.; Schmidt, P. (1961):
Determinants of a certain class
of non-Hermitian Toeplitz matrices.
\textit{Math. Scand.}
9, 122--128,
\doi{10.7146/math.scand.a-10630}.

\bibitem{Barry2022}
Barry, P. (2022):
\textit{Riordan Arrays: A Primer.}
Logic Press,
Kilcock, Co. Kildare, Ireland.

\bibitem{Belkic2019}
Belki\'{c}, D. (2019):
All the trinomial roots, their powers and logarithms from the Lambert series, Bell polynomials and Fox–Wright function: illustration for genome multiplicity in survival of irradiated cells.
\textit{J. Math. Chem.} 57, 59--106,
\doi{10.1007/s10910-018-0985-3}.

\bibitem{BogoyaBoettcherGrudsky2012}
Bogoya, J.M.;
B\"{o}ttcher, A.;
Grudsky, S.M.
(2012):
Asymptotics of individual eigenvalues of a class of large Hessenberg Toeplitz matrices.
\textit{Oper. Theory Adv. Appl.} 220, 77--95,
\doi{10.1007/978-3-0348-0346-5\_5}.

\bibitem{BogoyaBoettcherGrudskyMaksimenko2012}
Bogoya, J.M.; B\"{o}ttcher, A.; Grudsky, S.M.; Maksimenko, E.A. (2012):
Eigenvectors of Hessenberg Toeplitz matrices and a problem by Dai, Geary, and Kadanoff.
\textit{Linear Algebra Appl.} 436:9, 3480--3492,
\doi{10.1016/j.laa.2011.12.012}. 

\bibitem{BogoyaBottcherGrudskyMaximenko2015}
Bogoya, J.M.; B\"{o}ttcher, A.; Grudsky, S.M.; Maximenko, E.A. (2015):
Eigenvalues of Hermitian Toeplitz matrices with smooth simple-loop symbols.
\textit{J. Math. Anal. Appl.} 422:2, 1308--1334,
\doi{10.1016/j.jmaa.2014.09.057}.

\bibitem{BogoyaGascaGrudsky2024}
Bogoya, M.;
Gasca, J.;
Grudsky, S.
(2024):
Eigenvalue asymptotic expansion for non-Hermitian tetradiagonal Toeplitz matrices with real spectrum.
\textit{J. Math. Anal. Appl.}
531, 127816,
\doi{10.1016/j.jmaa.2023.127816}.

\bibitem{BogoyaGascaGrudsky2025}
Bogoya, M.;
Gasca, J.;
Grudsky, S.M.
(2025):
Eigenvalues for a class of non-Hermitian tetradiagonal Toeplitz matrices.
J. Spectr. Theory 15:1, 441--477,
\doi{10.4171/JST/538}.

\bibitem{BogoyaGrudskyMaximenko2017}
Bogoya, J.M.;
Grudsky, S.M.;
Maximenko, E.A. (2017):
Eigenvalues of Hermitian Toeplitz matrices generated by simple--loop symbols with relaxed smoothness.
\textit{Oper. Theory Adv. Appl.}
259, 179--212,
\doi{10.1007/978-3-319-49182-0\_11}.

\bibitem{BGGK2021}
B\"{o}ttcher, A.; Gasca, J.;
Grudsky, S.M.; Kozak, A.V. (2021):
Eigenvalue clusters of large tetradiagonal Toeplitz matrices.
\textit{Integr. Equ. Oper. Theory}
93:8,
\doi{10.1007/s00020-020-02619-z}.

\bibitem{BoettcherGrudsky2005}
B\"{o}ttcher, A.; Grudsky, S. (2005):
\textit{Spectral Properties of Banded Toeplitz Matrices}.
SIAM, Philadelphia.

\bibitem{BoettcherGrudskyMaksimenko2010}
Böttcher, A.;
Grudsky, S.M.;
Maksimenko, E.A. (2010):
Inside the eigenvalues of certain Hermitian Toeplitz band matrices.
\textit{J. Comput. Appl. Math.}
233:9, 2245--2264,
\doi{10.1016/j.cam.2009.10.010}.

\bibitem{BoettcherSilbermann1999}
B\"{o}ttcher, A.; Silbermann, B. (1999):
\textit{Introduction to Large Truncated Toeplitz Matrices}.
Springer, New-York.

\bibitem{Bruijn1958}
Bruijn, N.G. de (1958):
\textit{Asymptotic Methods in Analysis}.
North-Holland Publishing Co.,
Amsterdam.

\bibitem{DeiftItsKrasovsky2011}
Deift, P.;
Its, A.;
Krasovsky, I. (2011):
Asymptotics of Toeplitz, Hankel, and Toeplitz $+$ Hankel determinants with Fisher-Hartwig singularities.
\textit{Ann. of Math.}
174, 1243--1299,
\doi{10.4007/annals.2011.174.2.12}.

\bibitem{DuitsKuijlaars2008}
Duits, M., Kuijlaars, A.B.J. (2008):
An equilibrium problem for the limiting eigenvalue distribution of banded Toeplitz matrices.
\textit{SIAM J. Matrix Anal. Appl.} 30, 173--196,
\doi{10.1137/070687141}.

\bibitem{FonsecaVeerman2009}
Fonseca, C.M. da;
Veerman, J.J.P. (2009):
On the spectra of certain directed paths.
\textit{Appl. Math. Lett.} 22, 1351--1355,
\doi{10.1016/j.aml.2009.03.006}.

\bibitem{GaroniSerra2017}
Garoni, C., Serra-Capizzano, S. (2017): \textit{Generalized Locally Toeplitz Sequences: Theory and Applications}, vol.~I.
Springer, Cham.

\bibitem{testtoeplitz1001}
Grudsky, S.M.;
Higuera-Garc\'{i}a, R.;
Maximenko, E.A.;
Vasquez-Rojas, F.
(2026):
Eigenvalues of the tetradiagonal Toeplitz matrices with diagonals 1, 0, 0, 1:
numerical tests.
\myurl{https://github.com/EgorMaximenko/toeplitz\_matrices\_1001}.

\bibitem{Hirschman1967}
Hirschman, I.I., Jr. (1967):
The spectra of certain Toeplitz matrices. 
\textit{Illinois J. Math.}
11(1), 145--159,
\doi{10.1215/ijm/1256054792}.

\bibitem{Holmes2002}
Holmes, G.C. (2002):
The use of hyperbolic cosines in solving cubic polynomials.
\textit{Math. Gaz.} 86:507, 473--477,
\doi{10.2307/3621149}.

\bibitem{Stackexchange_answer}
IV\_
(https://math.stackexchange.com/users/292527/iv),
Are there some special functions to solve transcendental equations with exponential and trigonometric term
(in particular, $\exp(-x)=\cos x$)?,
\\
URL (version: 2025-08-18):
\myurl{https://math.stackexchange.com/q/5088332}.

\bibitem{MaximenkoMoctezuma2017}
Maximenko, E.A.;
Moctezuma-Salazar, M.A. (2017):
Cofactors and eigenvectors of banded Toeplitz matrices.
Trench formulas via skew Schur polynomials.
\textit{Oper. Matrices}
11:4, 1149--1169, 
\doi{10.7153/oam-2017-11-79}.

\bibitem{McMillen2009}
McMillen, T. (2009):
On the eigenvalues of double band matrices.
\textit{Linear Algebra Appl.}
431, 1890--1897,
\doi{10.1016/j.laa.2009.06.026}.

\bibitem{Rambour2023}
Rambour, P. (2023):
Asymptotic of the eigenvalues of Toeplitz matrices with even symbol.
arXiv:2101.11250 [math.CA],
\doi{10.48550/arXiv.2101.11250}.

\bibitem{Sagemath}
The Sage Developers (2025):
SageMath, the Sage Mathematics Software System
(Version 10.7),
\href{https://www.sagemath.org}{https://www.sagemath.org}.

\bibitem{SchmidtSpitzer1960}
Schmidt, P.; Spitzer, F. (1960):
The Toeplitz matrices of an arbitrary Laurent polynomial.
\textit{Math. Scand.}
8, 15--38,
\doi{10.7146/math.scand.a-10588}.

\bibitem{Shapiro2022Riordan}
Shapiro, L.;
Sprugnoli, R.;
Barry, P.;
Cheon, G.-S.;
He, T.-X.;
Merlini, D.;
Wang, W.
(2022):
\textit{The Riordan Group and Applications.}
Springer, Cham.

\bibitem{Trench1985eig}
Trench, W.F. (1985):
On the eigenvalue problem for Toeplitz band matrices.
\textit{Linear Algebra Appl.}\ 64, 199--214,
\doi{10.1016/0024-3795(85)90277-0}.

\bibitem{Ullman1967}
Ullman, J.L. (1967):
A problem of Schmidt and Spitzer.
\textit{Bull. Amer. Math. Soc.}
73, 883--885,
\doi{10.1090/S0002-9904-1967-11826-3}.

\bibitem{Widom1958}
Widom, H. (1958):
On the eigenvalues of certain Hermitian operators.
\textit{Trans. Am. Math. Soc.}
88:2, 491--522,
\doi{10.2307/1993228}.

\bibitem{Widom1990}
Widom, H. (1990):
Eigenvalue distribution of nonselfadjoint Toeplitz matrices and the
asymptotics of Toeplitz determinants in the case of nonvanishing index.
\textit{Oper. Theory Adv. Appl.} 48, 387--421.

\bibitem{Widom1994}
Widom, H. (1994):
Eigenvalue distribution for nonselfadjoint Toeplitz matrices.
\textit{Oper. Theory Adv. Appl.} 71, 1--8,
\doi{10.1007/978-3-0348-8543-0\_1}.

\bibitem{ZizlerZuidwijkTaylorArimoto2002}
Zizler, P.;
Zuidwijk, R.A.;
Taylor, K.F.;
Arimoto, S. (2002):
A finer aspect of eigenvalue distribution of selfadjoint band Toeplitz matrices.
\textit{SIAM J. Matrix Anal. Appl.}
24:1, 59--67,
\doi{10.1137/S089547989834915X}.

\end{thebibliography}
\end{document}